\def\l@section{\@tocline{1}{12pt plus2pt}{0pt}{}{\bfseries}}
\def\l@subsection{\@tocline{2}{0pt}{2pc}{2pc}{}}
\def\subsection{\@startsection{subsection}{2}{\z@}%
	{-3.25ex\@plus -1ex \@minus -.2ex}%
	{1.5ex \@plus .2ex}%
	{\normalfont\bfseries\boldmath}}
\def\subsubsection{\@startsection{subsubsection}{3}%
	\z@{.5\linespacing\@plus.7\linespacing}{-.5em}%
	{\normalfont\bfseries\boldmath}}
\renewcommand\paragraph{\@startsection{paragraph}{4}{\z@}%
	{3.25ex \@plus1ex \@minus.2ex}%
	{-1em}%
	{\normalfont\normalsize\bfseries}}
\theoremstyle{plain}
\newtheorem{thm}{Theorem}[section]
\newtheorem{cor}[thm]{Corollary}
\newtheorem{obs}[thm]{Observation}
\newtheorem{prop}[thm]{Proposition}
\theoremstyle{definition}
\theoremstyle{remark}
\newtheorem{rem}[thm]{Remark}
\theoremstyle{plain}
\newtheorem{conj}[thm]{Conjecture}
\numberwithin{equation}{section}
\theoremstyle{plain} 
\newcommand{\thistheoremname}{}
\newtheorem{genericthm}[thm]{\thistheoremname}
  \newtheorem*{genericthm*}{\thistheoremname}
\newenvironment{namedthm*}[1]
  {\renewcommand{\thistheoremname}{#1}%
   \begin{genericthm*}}
  {\end{genericthm*}}
\newcommand{\ep}{\epsilon}
\newcommand{\al}{\alpha}
\newcommand{\be}{\beta}
\newcommand{\R}{{\mathbb R}}
\newcommand{\N}{{\mathbb N}}
\newcommand{\Z}{{\mathbb Z}}
\newcommand{\dist}{\hbox{ \rm dist}}
\newcommand{\calA}{{\mathcal A}}
\newcommand{\calB}{{\mathcal B}}
\newcommand{\calF}{{\mathcal F}}
\newcommand{\calI}{{\mathcal I}}
\newcommand{\calL}{{\mathcal L}}
\newcommand{\calM}{{\mathcal M }}
\newcommand{\calN}{{\mathcal N}}
\newcommand{\calP}{{\mathcal P}}
\newcommand{\calS}{{\mathcal S}}
\newcommand{\calT}{{\mathcal T}}
\newcommand{\supp}{{\textrm{supp}}}
\newcommand{\vast}{\bBigg@{4}}
\newcommand{\Vast}{\bBigg@{5}}
\newcommand{\frakB}{{\mathfrak B}}
\newcommand{\frakm}{{\mathfrak m}}
\newcommand{\frakM}{{\mathfrak M}}
\newcommand{\frakT}{{\mathfrak T}}
\def\udot#1{\ifmmode\oalign{$#1$\crcr\hidewidth.\hidewidth
    }\else\oalign{#1\crcr\hidewidth.\hidewidth}\fi}
\def\R{\mathbb{R}}
\def\Z{\mathbb{Z}}
\def\beq{\begin{equation}}
\def\eeq{\end{equation}}
\newcommand{\doublewidetilde}[1]{{%
  \mathpalette\double@widetilde{#1}%
}}
\newcommand{\double@widetilde}[2]{%
  \sbox\z@{$\m@th#1\widetilde{#2}$}%
  \ht\z@=.9\ht\z@
  \widetilde{\box\z@}%
}
\def\one{\mbox{1\hspace{-4.25pt}\fontsize{12}{14.4}\selectfont\textrm{1}}}
\def\@makefnmark{%
  \leavevmode
  \raise.9ex\hbox{\fontsize\sf@size\z@\normalfont\tiny\@thefnmark}}
\begin{document}
	
\title[Curved multi-linear operators in the quasi Banach range]{On the boundedness of the curved trilinear Hilbert transform and the curved $n-$linear maximal operator in the quasi-Banach regime}
\author{Bingyang Hu and Victor Lie}

\address{Bingyang Hu: Department of Mathematics and Statistics, Auburn University, 211 Parker Hall, Auburn, AL, U.S.A.}%
\email{bzh0108@auburn.edu}

\address{Victor D. Lie: Department of Mathematics, Purdue University, 150 N. University St, W. Lafayette, IN 47907, U.S.A.  and
The ``Simion Stoilow" Institute of Mathematics of the Romanian Academy, Bucharest, RO 70700, P.O. Box 1-764, Romania}%
\email{vlie@purdue.edu}

\begin{abstract}
Let $n\in\N$,  $\vec{\alpha}=(\al_1,\ldots,\al_n)\in (0,\infty)^n$, $\vec{\be}=(\be_1,\ldots,\be_n)\in (\R\setminus\{0\})^n$, $\vec{f}:=(f_1,\ldots, f_n)\in \calS^n(\R)$ and set 
$$H_{n,\vec{\al},\vec{\be}}(\vec{f})(x):=p.v. \int_{\R} f_1(x+\be_1 t^{\al_1})\ldots f_n(x+\be_n t^{\al_n}) \frac{dt}{t}, \quad x \in \R\,,$$
with $\calM_{n,\vec{\al},\vec{\be}}$ being its maximal operator counterpart. Assume that  $1<p_j<\infty$ and $\frac{1}{2}<r<\infty$ satisfy $\sum_{j=1}^n \frac{1}{p_j}=\frac{1}{r}$. 

Under the \emph{non-resonant} assumption that  $\{\al_j\}_{j=1}^n$ are pairwise distinct we show that
$$\|H_{3,\vec{\al},\vec{\be}}(\vec{f})\|_{L^r}\lesssim_{\vec{\al},\vec{\be}}  \prod_{j=1}^3\|f_j\|_{L^{p_j}}\qquad
\textrm{and} \qquad \|\calM_{n,\vec{\al},\vec{\be}}\|_{L^r}\lesssim_{n,\vec{\al},\vec{\be}} \prod_{j=1}^n\|f_j\|_{L^{p_j}} \qquad \forall\:n\geq 2\,.$$ 
(For the maximal operator the boundedness range can be extended to include the endpoint $\infty$ for both $p_j$ and $r$.)
\end{abstract}

\date{\today}

\keywords{Trilinear Hilbert transform, wave-packet analysis, zero/non-zero/hybird curvature regime, LGC method}

\thanks{}

\maketitle


\section{Introduction}

Let $n\in\N$,  $\vec{\alpha}=(\al_1,\ldots,\al_n)\in (0,\infty)^n$, $\vec{\be}=(\be_1,\ldots,\be_n)\in (\R\setminus\{0\})^n$, $\vec{f}:=(f_1,\ldots, f_n)\in \calS^n(\R)$, and, for $t\in\R$, set $t^{\vec{\al}}:=(t^{\al_1},\ldots, t^{\al_n})$.  Consider the following two central objects:
\begin{itemize}
\item the \emph{$n$-linear Hilbert transform} along the curve $\vec{\be}\,t^{\vec{\al}}$ defined by
\begin{equation} \label{CTHT}
H_{n,\vec{\al},\vec{\be}}(\vec{f})(x):=p.v. \int_{\R} f_1(x+\be_1 t^{\al_1})\ldots f_n(x+\be_n t^{\al_n}) \frac{dt}{t}, \quad x \in \R;
\end{equation}

\item the \emph{$n$-(sub)linear maximal operator} along the curve $\vec{\be}\,t^{\vec{\al}}$ defined by
\begin{equation} \label{CnLMO}
\calM_{n,\vec{\al},\vec{\be}} (\vec{f})(x):=\sup_{\epsilon>0} \frac{1}{2\epsilon} \int_{-\epsilon}^{\epsilon}  \left|f_1\left(x+\be_1 t^{\alpha_1}\right) \cdots f_n \left(x+\be_n t^{\alpha_n}\right) \right| dt, \quad x \in \R\,.
\end{equation} 
\end{itemize}

The aim of this paper is to study the quasi-Banach boundedness properties for both $H_{n,\vec{\al},\vec{\be}}$ and $\calM_{n,\vec{\al},\vec{\be}}$ in the (purely) non-resonant regime---\emph{i.e.} when these operators do not verify any type of modulation invariance relations---which is equivalent with the requirement 
\begin{equation} \label{nonres}
    \{\al_j\}_{j=1}^n\:\textrm{are \emph{pairwise distinct}}.
\end{equation}

\subsection{Statement of the main results} The main results of our paper are:

\begin{thm} \label{mainthm01}
Let $\{\al_j\}_{j=1}^3\subset(0,\infty)$ be pairwise distinct and let $\{\be_j\}_{j=1}^3\subset\R\setminus\{0\}$. Then, for any $1<p_1, p_2, p_3<\infty$ and $\frac{1}{2}<r<\infty$ satisfying $\frac{1}{p_1}+\frac{1}{p_2}+\frac{1}{p_3}=\frac{1}{r}$, one has
\begin{equation} \label{bdquasitri}
\|H_{3,\vec{\al},\vec{\be}}(\vec{f})\|_{L^r}\lesssim_{\vec{\al},\vec{\be}} \|f_1\|_{L^{p_1}}\,\|f_2\|_{L^{p_2}}\,\|f_3\|_{L^{p_3}}\,.
\end{equation}
\end{thm}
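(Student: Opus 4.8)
The plan is to reduce the trilinear estimate to a single-scale/frequency-localized model operator and then run the LGC (linearized geometry of curves) machinery together with a time-frequency correction, exploiting the non-resonance $\al_1\neq\al_2\neq\al_3$ to obtain genuine cancellation. First I would perform the standard discretization: decompose the kernel $\frac{dt}{t}=\sum_{m\in\Z}\psi_m(t)\,\frac{dt}{t}$ into Littlewood–Paley pieces supported on $|t|\sim 2^{-m}$, and write $H_{3,\vec{\al},\vec{\be}}=\sum_m H_m$. After rescaling $t\mapsto 2^{-m}t$, each $H_m$ becomes an average over $|t|\sim 1$ of the product $f_1(x+2^{-m\al_1}\be_1 t^{\al_1})f_2(x+2^{-m\al_2}\be_2 t^{\al_2})f_3(x+2^{-m\al_3}\be_3 t^{\al_3})$, so that the three factors now ``live'' at three different, widely separated spatial scales $2^{-m\al_1},2^{-m\al_2},2^{-m\al_3}$. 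The goal is to prove a single-scale bound for $H_m$ with an exponential gain $2^{-\delta|m|}$ in a suitable range of exponents, and then sum in $m$; interpolation with the trivial $L^\infty$-type bounds extends the range to the full quasi-Banach region $\tfrac12<r<\infty$, $1<p_j<\infty$.

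Next I would Fourier-expand in the curve variable. Following the LGC philosophy, on the scale $|t|\sim 1$ one linearizes each monomial phase $t^{\al_j}$ by a Taylor/Gabor decomposition into wave packets indexed by frequency parameters; the key algebraic input is that the matrix of exponents obtained from $(\al_1,\al_2,\al_3)$ is nondegenerate precisely because the $\al_j$ are pairwise distinct, so no nonzero modulation symmetry survives and the associated trilinear form has no ``Brascamp–Lieb'' degeneracy. One then decomposes each $f_j$ via a Gabor frame adapted to the scale $2^{-m\al_j}$, reducing $H_m$ to a sum over tri-tiles of rank-one trilinear forms. At this stage the main estimate is a single-scale $L^{p_1}\times L^{p_2}\times L^{p_3}\to L^r$ bound with polynomial-in-parameters control, which one upgrades to exponential decay in $|m|$ by exploiting the oscillation coming from the mismatch of the phases $2^{-m\al_i}t^{\al_i}$ versus $2^{-m\al_j}t^{\al_j}$ for $i\neq j$ — a stationary-phase/non-stationary-phase dichotomy that produces the gain $2^{-\delta|m|}$.

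The heart of the argument — and the place I expect the real difficulty — is controlling the resulting sum over tri-tiles uniformly in the frequency-modulation parameters, i.e. proving the single-scale trilinear bound with a gain. Unlike the bilinear curved Hilbert transform, here one cannot appeal to a clean $TT^*$ argument; instead one must combine (i) an $L^2$-based orthogonality/almost-orthogonality estimate for the ``diagonal'' portion where the three wave-packet frequencies are in generic position, using the non-degeneracy of the Vandermonde-type system in $\vec\al$, with (ii) a separate treatment of the ``off-diagonal'' contributions via size/energy increments à la time-frequency analysis, or via the recent sparse-domination / stopping-time decompositions for curved multilinear objects. Passing from $L^2\times L^2\times L^2\to L^{2/3}$ down to the quasi-Banach corner $r<1$ requires an additional interpolation/extrapolation step — one proves restricted-weak-type bounds at the vertices of a polytope of admissible exponents and interpolates — and one must check that the constants depend only on $\vec\al,\vec\be$ and not on the discretization. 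Once the single-scale estimate with $2^{-\delta|m|}$ decay is in hand, summing over $m\in\Z$ and over the finitely many scale-separation regimes yields \eqref{bdquasitri}, with the $L^r$-quasi-norm handled by the usual vector-valued/atomic argument in the range $r>\tfrac12$.
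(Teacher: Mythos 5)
Your proposal correctly captures the general LGC philosophy—spatial-frequency decomposition, wave-packet discretization at the mismatched scales $2^{-m\al_j}$, stationary/non-stationary dichotomy, and a finish by restricted weak-type multilinear interpolation—but it is a sketch that stops short precisely where the paper does its real work, and some of the ingredients you invoke are not in fact what carries the argument.

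The paper does not attempt to prove a single-scale estimate with exponential gain $2^{-\delta|m|}$ directly in the quasi-Banach range via size/energy increments or sparse domination; nothing of that sort appears, nor do Brascamp--Lieb nondegeneracy or Vandermonde systems. Instead, the quasi-Banach extension is built as a leverage of the earlier Banach-range result \cite{HL23}: the exponential decay estimates (in local $L^2$) are imported as a black box, and the new contribution is a family of \emph{polynomial-loss} restricted weak-type bounds (Propositions \ref{Mainprop}, \ref{Mainpropsinglesc}, \ref{Mainproplow}), which are then interpolated against \eqref{20230323eq01}--\eqref{20230323eq02}. The technical heart of those polynomial-loss bounds is the newly introduced \emph{curved trilinear square function} $\calT(f_1,f_2,f_4)$ in \eqref{trilinsq}, arising after one Cauchy--Schwarz step against a standard square function in $f_3$. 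Its $L^p$ bounds are obtained via a delicate exceptional-set construction ($\Omega_1,\Omega_2,\Omega_3$ involving level sets and \emph{average shifted maximal operators} as in \eqref{20231229eq20}), an $L^2$ estimate, a Calder\'on--Zygmund $L^1\to L^{1,\infty}$ estimate, and a Khintchine/duality argument for $p>2$. None of this is in your outline.

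You also mislocate the $r>1/2$ threshold: it does not come from a "vector-valued/atomic argument." As Observation \ref{max range} explains, the obstruction is a genuine single-scale one: the diagonal single-scale pieces $\Lambda_m^k$ with $0\le k\le m$ only yield \eqref{diaglargeboundsgsc} with $\theta_1+\theta_2+\theta_3\le 2$, which is precisely the $r>1/2$ barrier, and pushing past it is expected to require additive-combinatorial/incidence-geometry input that the present methods do not supply. Your proposal neither predicts this barrier nor identifies its source. In short, the plan is at the right altitude but never descends to the decisive mechanism—the trilinear square function and its exceptional-set $L^p$ theory—so as written there is a real gap between the sketch and a proof.
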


\begin{rem} \label{remark1}
The $n\geq 4$ analogue result for the singular operator \eqref{CTHT} will be the topic of a follow-up paper. However, the corresponding result for the maximal counterpart \eqref{CnLMO} in the general case $n\geq 2$ is treated in the present paper:
\end{rem}

\begin{thm} \label{mainthm03}
Let $n\geq 2$ and  $\{\al_j\}_{j=1}^n\subset(0,\infty)$ be pairwise distinct and let $\{\be_j\}_{j=1}^n\subset\R\setminus\{0\}$. Then, for any $1<p_j\leq \infty$ and $\frac{1}{2}<r\leq\infty$ satisfying $\sum_{j=1}^n \frac{1}{p_j}=\frac{1}{r}$, one has
\begin{equation} \label{bdquasimax}
\|\calM_{n,\vec{\al},\vec{\be}}\|_{L^r}\lesssim_{n,\vec{\al},\vec{\be}} \prod_{j=1}^n\|f_j\|_{L^{p_j}}\,. 
\end{equation}
\end{thm}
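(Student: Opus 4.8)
The plan is to reduce the $n$-linear maximal estimate \eqref{bdquasimax} to a pointwise domination by a product of single-variable Hardy--Littlewood-type maximal operators adapted to the curve, and then to exploit the pairwise-distinctness hypothesis \eqref{nonres} through a rescaling/linearization argument. First I would observe that, by homogeneity of the curve $t\mapsto\be_j t^{\al_j}$ under the parabolic dilation $t\mapsto\lambda t$, it suffices to control the local piece where the supremum is taken over $\eps\in(0,1]$ (or equivalently to prove a single-scale estimate with constants uniform in the scale, then sum a geometric series in the quasi-Banach exponent $r$). At a fixed scale $\eps$, after the change of variables $t\mapsto \eps t$ the averaging operator becomes $\frac12\int_{-1}^{1}|f_1(x+\be_1\eps^{\al_1}t^{\al_1})\cdots f_n(x+\be_n\eps^{\al_n}t^{\al_n})|\,dt$, so the genuinely multilinear difficulty is that the $n$ arguments move at $n$ \emph{different} polynomial speeds in $t$.

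The key step is a pointwise bound
\[
\calM_{n,\vec{\al},\vec{\be}}(\vec f)(x)\ \lesssim_{n,\vec{\al},\vec{\be}}\ \prod_{j=1}^{n} \calN_{\al_j,\be_j}(f_j)(x),
\]
where $\calN_{\al,\be}$ is an appropriate (sub)linear maximal operator, bounded on every $L^p$ with $1<p\le\infty$, that dominates $\sup_{\eps>0}\frac{1}{2\eps}\int_{-\eps}^{\eps}|f(x+\be t^{\al})|\,dt$. The latter single-variable operator is comparable, after the substitution $s=\be t^\al$ (which is a bijection on $(0,\infty)$ with Jacobian a power of $s$), to a one-dimensional Hardy--Littlewood maximal function computed against the measure $|s|^{1/\al-1}ds$, hence is bounded on $L^p(\R)$ for all $1<p\le\infty$ by standard weighted/doubling-measure maximal theory (the weight $|s|^{1/\al-1}$ is an $A_p$ weight for every $1<p<\infty$ when $\al>0$, and trivially $L^\infty\to L^\infty$). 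Once the pointwise domination above is in hand, Hölder's inequality with exponents $p_1,\dots,p_n$ — valid in the quasi-Banach range $\sum 1/p_j=1/r>0$ with no lower restriction on $r$ beyond positivity — yields
\[
\|\calM_{n,\vec{\al},\vec{\be}}(\vec f)\|_{L^r}\ \le\ \prod_{j=1}^n \|\calN_{\al_j,\be_j}(f_j)\|_{L^{p_j}}\ \lesssim\ \prod_{j=1}^n\|f_j\|_{L^{p_j}},
\]
which is exactly \eqref{bdquasimax}, endpoints included.

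To make the pointwise domination precise: fix $x$ and $\eps>0$, split $[-\eps,\eps]=[-\eps,0)\cup(0,\eps]$ and treat each half separately (on $[-\eps,0)$ write $t=-u$ and note $t^{\al}$ must be interpreted on the relevant branch; the two halves are symmetric up to sign conventions in $\be_j$, so it is enough to handle $(0,\eps]$). Partition $(0,\eps]$ dyadically as $\bigcup_{k\ge 0}(2^{-k-1}\eps, 2^{-k}\eps]$. On the $k$-th block each argument $x+\be_j t^{\al_j}$ ranges over an interval $I_{j,k}$ centered near $x$ of length $\sim |\be_j|(2^{-k}\eps)^{\al_j}$, and
\[
\frac{1}{2\eps}\int_{(2^{-k-1}\eps,\,2^{-k}\eps]}\prod_{j}|f_j(x+\be_j t^{\al_j})|\,dt\ \le\ 2^{-k}\prod_{j=1}^{n}\Big(\text{avg of }|f_j|\text{ over }I_{j,k}\Big)\ \le\ 2^{-k}\prod_{j=1}^n\calN_{\al_j,\be_j}(f_j)(x),
\]
using that the one-sided average of $\prod_j |f_j|$ is bounded by the product of the individual averages on a common dyadic block (this is just the elementary inequality $\frac{1}{|J|}\int_J \prod |f_j| \le \prod (\frac{1}{|J|}\int_J |f_j|^{?})$ — in fact here each $f_j$ is averaged over its \emph{own} interval $I_{j,k}$, and we simply use $\prod_j \sup_k \mathrm{avg}_{I_{j,k}}|f_j| \le \prod_j \calN_{\al_j,\be_j}f_j(x)$ after pulling the product outside and summing $\sum_k 2^{-k}<\infty$). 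Summing over $k$ and over the two halves and taking the supremum over $\eps$ closes the pointwise bound; the constant depends on $n,\vec\al,\vec\be$ only through $\sum 2^{-k}$ and the $\be_j$'s, as claimed.

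I expect the main obstacle to be purely bookkeeping rather than conceptual: one must be careful with the meaning of $t^{\al_j}$ for $t<0$ and non-integer $\al_j$ (the principal-value / real-branch convention fixed in \eqref{CnLMO}), and one must verify that the one-sided, single-variable maximal operators $\calN_{\al_j,\be_j}$ are genuinely of weak type $(1,1)$ and strong type $(p,p)$ for all $1<p\le\infty$ — this is where the change of variables $s=\be_j t^{\al_j}$ and the $A_p$ property of power weights $|s|^{1/\al_j-1}$ (equivalently, the doubling property of the push-forward measure) is used. Notably, the pairwise-distinctness hypothesis \eqref{nonres} is \emph{not} needed for the maximal operator: the domination by a product of positive operators never sees the oscillation, so Theorem~\ref{mainthm03} in fact holds for arbitrary $\vec\al\in(0,\infty)^n$; the restriction to distinct $\al_j$ in the statement is only inherited from the unified framing of the paper and may be dropped here.
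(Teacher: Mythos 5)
Your proof has a fatal gap at its core: the pointwise domination
\[
\calM_{n,\vec{\al},\vec{\be}}(\vec f)(x)\ \lesssim\ \prod_{j=1}^{n} \calN_{\al_j,\be_j}(f_j)(x)
\]
is \emph{false}. The step where you pass from the averaged integral of a product to a product of (one-sided, $L^1$) averages,
\[
\frac{1}{2\eps}\int_{(2^{-k-1}\eps,\,2^{-k}\eps]}\prod_{j}|f_j(x+\be_j t^{\al_j})|\,dt\ \le\ 2^{-k}\prod_{j=1}^{n}\Bigl(\,\textrm{avg of }|f_j|\textrm{ over }I_{j,k}\Bigr),
\]
does not hold: you are trying to bound a single integral in $t$ of a product of $n$ factors, each a composition with a different (but simultaneous, not independent) change of variable, by a product of decoupled single-function averages. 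There is no ``independence'' between the arguments $x+\be_j t^{\al_j}$ that would justify this — they are all rigid functions of the same integration variable $t$. A concrete counterexample: take $n=2$, $\al_1=1$, $\al_2=2$, $\be_1=\be_2=1$, $x=0$, $\eps=1$, $k=0$, $f_1=\one_{[1/2,\,1/2+\delta]}$ and $f_2=\one_{[1/4,\,(1/2+\delta)^2]}$. Then the left-hand side is comparable to $\delta$, while the right-hand side is comparable to $\delta^2$; as $\delta\to0$ the inequality fails by an unbounded factor. The hedged ``$|f_j|^{?}$'' in your parenthetical signals an awareness that one really needs H\"older with exponents summing to $1$ inside the average, but that would replace the $L^1$ averages of the $f_j$ by $L^{q_j}$ averages with $\sum 1/q_j=1$, each $q_j>1$, and the resulting maximal operators are bounded on $L^p$ only for $p>q_j$ — reintroducing exactly the range restriction you are trying to avoid.

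There is also a structural warning sign you should have heeded. If your pointwise bound were true, the subsequent H\"older step would give $\calM_{n,\vec{\al},\vec{\be}}:L^{p_1}\times\cdots\times L^{p_n}\to L^r$ for the full conjectural range $r>1/n$ of Conjecture~\ref{maxrange}, \emph{and} with no use of the non-resonance hypothesis at all. But the paper explicitly states (Remark~\ref{range}) that this conjecture is open for $n\ge3$, and the stated Theorem~\ref{mainthm03} only claims $r>1/2$; moreover, when two $\al_j$'s coincide (the zero-curvature case), the best known range for the bilinear maximal operator is $r>2/3$ (Lacey, \cite{la3}), so your final remark that pairwise distinctness ``may be dropped'' is also not correct at the level of what is actually known. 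When a one-page elementary argument appears to prove something the paper declares open and to discard a hypothesis the paper retains, that is a strong sign the argument is flawed.

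For comparison, the paper's proof takes a genuinely different (and much more modest) route. It first records the monotonicity property \eqref{monot}, which lets one bound $\calM_{n,\vec{\al},\vec{\be}}(\vec f)$ pointwise by a \emph{single} bilinear curved maximal operator applied to two of the inputs after sending the remaining $n-2$ functions to $L^\infty$. The bilinear boundedness $L^{p_1}\times L^{p_2}\to L^r$ with $r>1/2$ is then taken from Li--Xiao \cite{LX16} (which hides a nontrivial smoothing inequality). Finally, multilinear interpolation between the resulting finitely many vertices (with exactly two finite $p_j$'s at a time) fills in the full range $1<p_j\le\infty$, $r>1/2$. The $r>1/2$ threshold is inherited directly from the bilinear smoothing input; no pointwise decoupling is ever attempted, precisely because it is unavailable.
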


\begin{rem}
The central contribution of this paper is given by Theorem \ref{mainthm01}. In contrast with the singular version, the treatment of the maximal counterpart for general $n$ in Theorem \ref{mainthm03} is much easier due to the positivity of the kernel in expression \eqref{CnLMO} which, in turn, implies a simple but key monotonicity property--see relation \eqref{monot}. This latter feature can then be paired with interpolation techniques and prior knowledge on the maximal boundedness range for the bilinear maximal operator obtained by taking $n=2$ in \eqref{CnLMO}. Thus, interestingly enough, for proving the boundedness range stated in Theorem \ref{mainthm03} one only needs (implicitly) the existence of a smoothing inequality for $n=2$. A smoothing inequality for $n\geq 3$ is only required if one intends to push the output Lebesgue index $r$ below $1/2$. However, this latter task requires much more than the presence of a higher order smoothing inequality as hinted by the same $r>1/2$ range restriction in Theorem \ref{mainthm01}. For more on this topic see Remark \ref{range} and Observation \ref{max range}.
\end{rem}

Combining the methods in \cite{DD}, \cite{Lie2024} and \cite{GL22} together with a straightforward adaptation of the proof of the above theorem one also has the following extension: 

\begin{cor}\label{th2ext} Fix $d\in\N$, $n\geq 2$ and for $j\in\{1,\ldots, n\}$ let $P_j(t)=\sum_{r=1}^{d}a_{j,r}\, t^{\al_{j,r}}$ be some non-constant fewnomials/generalized polynomials with $\{a_{j,r}\}_{j,r},\,\{\al_{j,r}\}_{j,r}\subset\R$ and $\{\al_{j,r}\}_{r}$ pairwise distinct for each fixed $j$. Also let $A_j$ be the set consisting of the smallest and largest exponents within $\{\al_{j,r}\}_{j=1}^d$ which correspond to non-zero generalized monomials in the definition of $P_j$. Assume that
\begin{equation} \label{nonrezcond}
\textrm{the sets}\:\:\{A_j\}_{j=1}^n\:\:\textrm{are pairwise disjoint.}\footnote{This latter condition is essentially equivalent with avoiding resonances among the input functions thus eliminating the situation when the operator under analysis exhibits modulation invariance features.} 
\end{equation}
Then, there exists $c(d)\in [1, 2d-1]$ such that for any  $1<p_j\leq \infty$ and $\frac{c(d)}{c(d)+1}<r\leq\infty$ satisfying $\sum_{j=1}^n \frac{1}{p_j}=\frac{1}{r}$, the maximal operator
\begin{equation} \label{CnLMOext}
\calM_{n,\vec{P}} (\vec{f})(x):=\sup_{\epsilon>0} \frac{1}{2\epsilon} \int_{-\epsilon}^{\epsilon}  \left|f_1\left(x+P_1(t)\right) \cdots f_n \left(x+P_n(t)\right) \right| dt, \quad x \in \R
\end{equation} 
obeys
\begin{equation} \label{bdquasimaxext}
\|\calM_{n,\vec{P}}\|_{L^r}\lesssim_{n, \vec{P}} \prod_{j=1}^n\|f_j\|_{L^{p_j}}\,. 
\end{equation}
\end{cor}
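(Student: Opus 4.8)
The plan is to run, in the fewnomial setting, the same two-step scheme underlying Theorem~\ref{mainthm03}: a soft \emph{monotonicity} reduction of the $n$-linear operator to the bilinear (and linear) ones, followed by interpolation, the only genuinely analytic ingredient being the bilinear boundedness for a pair of non-resonant fewnomials, imported from \cite{DD}, \cite{Lie2024} and \cite{GL22}. Concretely, since the integrand in \eqref{CnLMOext} is a product of absolute values, the trivial pointwise estimate $|f_k(x+P_k(t))|\le\|f_k\|_{L^\infty}$ gives, for every index $k$, the analogue of \eqref{monot}:
$$
\calM_{n,\vec P}(f_1,\dots,f_n)(x)\ \le\ \|f_k\|_{L^\infty}\,\calM_{n-1,\vec P_{\widehat k}}(f_1,\dots,\widehat{f_k},\dots,f_n)(x),
$$
where $\vec P_{\widehat k}$ is the curve with $k$-th component deleted. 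Iterating, for any pair $i\neq j$ one obtains $\calM_{n,\vec P}(\vec f)\le\big(\prod_{k\neq i,j}\|f_k\|_{L^\infty}\big)\,\calM_{2,(P_i,P_j)}(f_i,f_j)$, and likewise $\calM_{n,\vec P}(\vec f)\le\big(\prod_{k\neq i}\|f_k\|_{L^\infty}\big)\,\calM_{1,P_i}(f_i)$; the all-$L^\infty$ case is settled directly by $\|\calM_{n,\vec P}(\vec f)\|_{L^\infty}\le\prod_k\|f_k\|_{L^\infty}$.

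For the bilinear input, hypothesis \eqref{nonrezcond} forces $A_i\cap A_j=\emptyset$ for every $i\neq j$, i.e.\ the pair $(P_i,P_j)$ shares no leading or trailing exponent and is therefore non-resonant in the sense required by the bilinear theory. Combining the fewnomial LGC/smoothing machinery of \cite{DD}, \cite{Lie2024}, \cite{GL22} with the (routine) fewnomial adaptation of the proof of Theorem~\ref{mainthm03} then produces a threshold $c(d)\in[1,2d-1]$ — depending only on $d$ through the quality of the $n=2$ smoothing inequality for $d$-term fewnomials, itself governed by the bound $2d-1$ on the number of sign changes on $\R$ of a $d$-term generalized polynomial and of its derivative — such that
$$
\big\|\calM_{2,(P_i,P_j)}(f_i,f_j)\big\|_{L^s}\ \lesssim_{d,\vec P}\ \|f_i\|_{L^{p_i}}\,\|f_j\|_{L^{p_j}}\qquad\text{whenever}\qquad \tfrac1{p_i}+\tfrac1{p_j}=\tfrac1s,\ \ 1<p_i,p_j\le\infty,\ \ s>\tfrac{c(d)}{c(d)+1}.
$$
The linear pieces $\calM_{1,P_i}$ are bounded on $L^{p}(\R)$ for every $p>1$ by the classical theory of maximal averages along a polynomial/fewnomial curve: split $[-\epsilon,\epsilon]$ into the boundedly many intervals of monotonicity of $P_i$, change variables on each, and apply a van der Corput--type sublevel-set bound for $P_i'$.

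To pass from these ``edge'' estimates to the full range, linearize the supremum defining $\calM_{n,\vec P}$ by choosing a measurable $\epsilon=\epsilon(x)>0$, obtaining an honest $n$-linear operator $T_{\epsilon(\cdot)}$; the estimates above, inserted into the monotonicity chain with all but two (resp.\ all but one) of the $p_k$'s set to $\infty$, give bounds for $T_{\epsilon(\cdot)}$, uniform in the linearization, on the coordinate faces $\{1/p_k=0\ (k\neq i,j)\}$ of the parameter region $\calR:=\{(\tfrac1{p_1},\dots,\tfrac1{p_n}):\ 0\le\tfrac1{p_j}<1,\ \sum_j\tfrac1{p_j}<1+\tfrac1{c(d)}\}$. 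Since $\calR$ is the convex hull of (the relevant portions of) these faces, multilinear complex (Riesz--Thorin/Stein) interpolation extends the bound to all of $\calR$; taking the supremum over $\epsilon(\cdot)$ then yields \eqref{bdquasimaxext} on the asserted range $1<p_j\le\infty$, $\sum_j\tfrac1{p_j}=\tfrac1r$, $r>\tfrac{c(d)}{c(d)+1}$. (For $d=1$ one has $A_j=\{\al_j\}$, $c(1)=1$, and this reproduces Theorem~\ref{mainthm03}.)

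The genuinely hard step is the bilinear one: establishing boundedness of the maximal operator along a non-resonant pair of fewnomials down to the quasi-Banach exponent $s>\tfrac{c(d)}{c(d)+1}$, which is exactly where the LGC method together with a quantitative $n=2$ smoothing inequality for fewnomials is indispensable and where the constant $c(d)$, together with the bound $2d-1$ on it, is produced. Everything else — the monotonicity reduction, the classical bound for $\calM_{1,P_i}$, the linearize/interpolate/de-linearize device, and the (routine) verification that $\calR$ is the convex hull of the faces carrying estimates — is soft bookkeeping already implicit in the treatment of Theorem~\ref{mainthm03}.
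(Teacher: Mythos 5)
Your proposal is correct and follows essentially the same route as the paper, which proves this corollary only by the one-line remark that it follows from combining the methods of \cite{DD}, \cite{Lie2024} and \cite{GL22} with a straightforward adaptation of the proof of Theorem \ref{mainthm03}; your monotonicity reduction to the bilinear case, importation of the bilinear fewnomial bound, and multilinear interpolation is exactly that adaptation as carried out in Section 3. The only minor discrepancy is your heuristic for $c(d)$: per Remark \ref{depend} it is governed by the maximal multiplicity of nonzero roots of $P_{j_1}'-P_{j_2}'$ (a difference of two $d$-term fewnomials), not by sign changes of a single $d$-term generalized polynomial, though the resulting numerology $c(d)\le 2d-1$ agrees.
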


\begin{obs}\label{th2extext} As it turns out, one can further extend the applicability of Corollary \ref{th2ext} by including zero and hybrid curvature situations. Indeed, relying on \cite{la3}, \cite{GL22} and the proof of Theorem \ref{mainthm03}, in suitable situations one can completely remove the non-resonant condition \eqref{nonrezcond}, with only a possible change in the value of $c(d)$. One such situation is provided by the following example: in \eqref{CnLMOext} we take $P_j(t)=a_{j,1} t$ for $1\leq j<n$ and $P_n(t)=a_{n,1} t\,+\,a_{n,2} t^{\al}$ with $\{a_{j,1}\}_{j=1}^n\subset\R\setminus\{0\}$ pairwise distinct and $\al\in (0,\infty)\setminus\{1,2\}$. Then, with this choice of $\vec{P}$ and for arbitrary $n\geq 2$ we have that $\calM_{n,\vec{P}}$ verifies \eqref{bdquasimaxext} for any  $1<p_j\leq \infty$ satisfying $\sum_{j=1}^n \frac{1}{p_j}=\frac{1}{r}$ and $\frac{2}{3}<r\leq\infty$.
\end{obs}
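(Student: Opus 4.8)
The plan is to follow the architecture of the proof of Theorem \ref{mainthm03}, the sole new difficulty being that the ``bilinear core'' to which everything reduces must now be treated in a partially resonant regime. Throughout write $a_j:=a_{j,1}$, so that $P_j(t)=a_jt$ for $j<n$ and $P_n(t)=a_nt+a_{n,2}t^{\al}$ with $\{a_j\}_{j=1}^n\subset\R\setminus\{0\}$ pairwise distinct, $a_{n,2}\neq0$ and $\al\in(0,\infty)\setminus\{1,2\}$.

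First I would run the elementary monotonicity step underlying \eqref{monot}: discretizing the supremum in \eqref{CnLMOext} to dyadic scales $\ep\sim2^k$ and using the trivial bound $\sup_k\prod_\ell A_\ell(k)\leq\prod_\ell\sup_kA_\ell(k)$ for nonnegative quantities, one peels off, for every index $k\notin\{i,j\}$ (the pair $i\neq j$ to be chosen below), the single-curve average $\sup_k\,2^{-k}\!\int_{|t|\leq2^k}|f_k(x+P_k(t))|\,dt$. For a linear curve this is $\lesssim Mf_k$, and for $P_n$ the substitution $t\mapsto P_n(t)$, which is piecewise monotone with at most one critical point on each half-line, again yields $\lesssim_{\al,a_n,a_{n,2}}Mf_n$; indices with $p_k=\infty$ are handled by using $\|f_k\|_\infty$ directly. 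This gives the pointwise domination
\begin{equation*}
\calM_{n,\vec P}(\vec f)(x)\;\lesssim\;\calM_{2,(P_i,P_j)}(f_i,f_j)(x)\cdot\prod_{\substack{1\leq k\leq n\\k\neq i,j}}Mf_k(x).
\end{equation*}

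Next, I would feed in the appropriate bilinear maximal estimate. If $n\geq3$, pick $i,j\in\{1,\dots,n-1\}$ with $\tfrac1{p_i}+\tfrac1{p_j}$ as small as possible; the core is then the bilinear maximal function along the two distinct linear curves $(a_it,a_jt)$, i.e.\ (after a harmless rescaling) Lacey's bilinear maximal operator, which by \cite{la3} maps $L^{q_i}\times L^{q_j}\to L^s$ for $\tfrac1{q_i}+\tfrac1{q_j}=\tfrac1s$, $s>\tfrac23$. If $n=2$, the core is $\calM_{2,(P_1,P_2)}$ itself; rescaling $t\mapsto a_1^{-1}t$ turns it into the bilinear maximal operator along $(t,\,c_1t+c_2t^{\al})$ with $c_1=a_2/a_1\notin\{0,1\}$, $c_2\neq0$, $\al\notin\{1,2\}$, which is a curved-hybrid bilinear maximal operator falling under \cite{GL22}; combining the $t^{\al}$-smoothing there with a Lacey-type treatment (as in \cite{la3}) of the modulation symmetry carried by the two linear components $t$ and $c_1t$, it maps $L^{p_1}\times L^{p_2}\to L^r$ for $r>\tfrac23$. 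In either case, applying Hölder with the split $\tfrac1r=\big(\tfrac1{p_i}+\tfrac1{p_j}\big)+\sum_{k\neq i,j}\tfrac1{p_k}$ together with the $L^p$-boundedness of $M$ for $1<p\leq\infty$ reduces \eqref{bdquasimaxext} to the bilinear estimate with output index $\big(\tfrac1{p_i}+\tfrac1{p_j}\big)^{-1}$, and one checks $\tfrac1{p_i}+\tfrac1{p_j}\leq\tfrac2{n-1}\sum_{k=1}^{n-1}\tfrac1{p_k}\leq\tfrac2{n-1}\cdot\tfrac1r<\tfrac3{n-1}\leq\tfrac32$ for $n\geq3$ (nothing to check for $n=2$), so the core index stays $>\tfrac23$.

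The one step I expect to carry real content is the $n=2$ bilinear input: establishing the quasi-Banach bound for the curved maximal operator along $(t,c_1t+c_2t^{\al})$ in the presence of the resonant linear term $c_1t$ requires genuinely merging the curvature/LGC machinery of \cite{GL22} with the time-frequency treatment of the bilinear (modulation-invariant) maximal operator of \cite{la3}, and it is exactly here that the exclusions $\al=1$ (fully degenerate) and $\al=2$ (parabolic-resonant) enter. For $n\geq3$ no curvature is used: Lacey's theorem applied to any pair of the linear curves already delivers the range $\tfrac23<r\leq\infty$, with the curved curve $P_n$ contributing only the harmless maximal factor $Mf_n$.
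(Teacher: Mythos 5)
Your overall architecture---reduce to bilinear cores and invoke \cite{la3} for a pair of distinct linear curves and \cite{GL22} for the hybrid pair---is the intended one, but the reduction you use to get there is false. The pointwise domination
\[
\calM_{n,\vec P}(\vec f)(x)\;\lesssim\;\calM_{2,(P_i,P_j)}(f_i,f_j)(x)\cdot\prod_{k\neq i,j}Mf_k(x)
\]
does not hold: the quantity $\frac{1}{2\ep}\int_{-\ep}^{\ep}\prod_j|f_j(x+P_j(t))|\,dt$ is an average of a product in $t$, not a product of averages, so you cannot ``peel off'' a factor as its own single--curve average (the inequality $\frac{1}{2\ep}\int\prod_\ell g_\ell\le\prod_\ell\frac{1}{2\ep}\int g_\ell$ goes in the wrong direction). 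Concretely, for $n=3$ with the linear curves $t,\,2t,\,3t$, take $f_1=\one_{[N/3,(N+1)/3]}$, $f_2=\one_{[2N/3,2(N+1)/3]}$, $f_3=\one_{[N,N+1]}$: at $x=0$ the left-hand side is $\simeq N^{-1}$ while the right-hand side is $\simeq N^{-2}$. The only factor that can legitimately be pulled out of the integral is $\|f_k\|_{L^\infty}$, which is exactly the monotonicity \eqref{monot}. The correct route---the one followed in the proof of Theorem \ref{mainthm03}---is therefore to prove \eqref{bdquasimaxext} only at the ``corner'' tuples where exactly two of the $p_j$ are finite (there the remaining factors are bounded by $\|f_k\|_{L^\infty}$ and the core is genuinely bilinear), and then to recover the full range $\sum_j 1/p_j=1/r<3/2$ by multilinear interpolation.

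A casualty of your shortcut is the assertion that for $n\ge3$ ``no curvature is used.'' In the interpolation scheme the corners pairing the curved $P_n$ with one linear $P_i$ cannot be avoided: any interpolated tuple with $1/p_n>0$ must put positive weight on corners whose $n$-th coordinate is positive, and a weight count shows that single-function corners $(0,\dots,0,1/p_n)$ alone (which contribute nothing to the other coordinates and have coordinate sum $<1$) cannot generate the full admissible region. At such a pair corner the bilinear core is precisely the hybrid maximal operator along $\left(a_{i,1}t,\;a_{n,1}t+a_{n,2}t^{\al}\right)$, and the required $L^{p_i}\times L^{p_n}\to L^s$ bound for $s>2/3$ is the black-box input from \cite{GL22} (after rescaling $t\mapsto t/a_{i,1}$). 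This also resolves the worry in your final paragraph: the merging of the curvature/LGC analysis with the modulation-invariant, Lacey-type analysis for this hybrid bilinear maximal operator is not something left to be done here---it is the content of \cite{GL22}, as recalled in the introduction's discussion of the hybrid curvature setting, and the exclusions $\al\in\{1,2\}$ enter there. With these corrections your exponent bookkeeping for choosing the pair $(i,j)$ becomes unnecessary: at a corner there is no pair to choose.
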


\begin{rem}\label{depend}  The parameter $c(d)$ in Corollary \ref{th2ext} may be chosen equal to $\textsf{m}$ with the latter defined as the maximal multiplicity of the nonzero roots of expressions of the form $P'_{j_1}(t)-P'_{j_2}(t)$ with $j_1\not=j_2\in \{1,\ldots, n\}$. Thus, in generic situations, one has  $\textsf{m}=1=c(d)$, while in the (exceptional) highly correlative situations when $\textsf{m}$ is maximal possible, one has $\textsf{m}=2d-1=c(d)$.
\end{rem}

\begin{rem}\label{Pel} Theorem \ref{mainthm03}, Corollary \ref{th2ext} and Observation \ref{th2extext} extend an earlier result in \cite{BK25}. In the latter, the authors therein proved the boundedness of \eqref{CnLMO} under the following concurrent restrictions: 

\emph{(i)} all the fewnomials $\{P_j\}_{j=1}^n$ in \eqref{CnLMOext} must be polynomials;

\emph{(ii)} each polynomial is of the form $P_j(t)=\be_j t^{\al_j}$ with $\be_j\in\R\setminus\{0\}$ and $\{\al_j\}_{j=1}^n\subset\N$ pairwise distinct; 

\emph{(iii)} the output Lebesque index in \eqref{bdquasimaxext} satisfies the condition $r\geq 1$.

The nature of the first two restrictions originate in the key (black box) ingredient employed in \cite{BK25}: a local $n$-smoothing inequality proved in \cite{KMPW2024b} (see also \cite{KMPW2024a}). The proof of the latter relies on the so-called PET induction scheme developed by Bergelson and Leibman in \cite{BL96}, a powerful method  which, in the context described within Corollary \ref{th2ext} above, faces two key restrictions: 

\emph{1)} it only applies to curves described by polynomials with distinct degrees;

\emph{2)} it only applies to large physical scales corresponding in our case to $|t|>>1$. 

Thus, item \emph{1)} is responsible for restriction \emph{(i)} above while the second restriction \emph{(ii)} is due to the usage of a rescaling argument that is meant to circumvent the presence of \emph{2)}.  

We end this remark by noticing that in contrast with the  PET induction scheme and the subsequent methods developed in  \cite{KMPW2024a} and \cite{KMPW2024b}, the Rank II LGC-methodology introduced by the authors in \cite{HL23} has the flexibility of providing the desired local smoothing inequalities with no restrictions  of the form \emph{1)} and/or \emph{2)}.
\end{rem}

We end this section by recalling the following

\begin{conj}\label{maxrange} (\cite{HL23}) The maximal\footnote{Up to end-points.} boundedness range for both \eqref{CTHT} and \eqref{CnLMO} in the non-resonant regime is given by
$$
H_{n,\vec{\al}, \vec{\be}},\,\calM_{n,\vec{\al}, \vec{\be}}: L^{p_1}(\R) \times \dots \times L^{p_n}(\R) \longmapsto L^r(\R)\,,
$$
where
\begin{equation} \label{maxrange}
\sum_{j=1}^n \frac{1}{p_j}=\frac{1}{r}\quad\textrm{with}\quad 1<p_j < \infty\quad \textrm{and}\quad \frac{1}{n}<r<\infty\,.
\end{equation}
\end{conj}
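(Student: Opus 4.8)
We sketch a possible line of attack, separating the conjecture into a positive part --- boundedness throughout the range \eqref{maxrange} --- and a sharpness part --- the fact that this range cannot be enlarged. \textbf{Step 1 (the single-scale input).} Following the scheme behind Theorems \ref{mainthm01} and \ref{mainthm03}, write $H_{n,\vec\al,\vec\be}=\sum_{j\in\Z}H^{(j)}_{n,\vec\al,\vec\be}$ according to the dyadic scale $|t|\simeq2^{j}$, and split each $H^{(j)}$ further by the parameter measuring the distance of the input frequency tuple from the modulation-invariance locus, which is \emph{empty} by the non-resonance hypothesis \eqref{nonres}. The Rank II LGC-methodology of \cite{HL23}, applied for \emph{all} $n$ --- the case $n\ge4$ of the underlying $n$-linear smoothing inequality being the subject of the follow-up paper announced in Remark \ref{remark1} --- should produce a single-scale estimate with a geometric gain,
\[
\bigl\|H^{(j)}_{n,\vec\al,\vec\be}(\vec f)\bigr\|_{L^{r}}\;\lesssim_{n,\vec\al,\vec\be}\;2^{-\eps|j|}\prod_{i=1}^{n}\|f_i\|_{L^{p_i}}\,,
\]
the decisive feature being that \eqref{nonres} makes the underlying $TT^{*}$/oscillatory estimates available on \emph{every} physical scale, not merely for $|t|\gg1$ (contrast Remark \ref{Pel}). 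Away from the quasi-Banach corners this is essentially the known input; reaching $r$ close to $\tfrac1n$ forces one to run the argument into those corners, which is where the $n$-th order smoothing inequality genuinely enters (cf.\ the discussion after Theorem \ref{mainthm03}).

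\textbf{Step 2 (summation below $r=\tfrac12$).} For $r>\tfrac12$, interpolation with uniform-in-$j$ bounds together with duality lets one sum the geometric series in $j$, recovering in particular Theorems \ref{mainthm01} and \ref{mainthm03}. Below $r=\tfrac12$ this mechanism breaks down --- as stressed in Remark \ref{Pel}, a higher-order smoothing inequality is then necessary but far from sufficient --- and the plan is to replace duality by an \emph{operator-level} sparse bound: one expects that, in the non-resonant regime and for every $\vec s=(s_1,\dots,s_n)$ with $s_i>1$, there is a sparse family $\mathcal{S}=\mathcal{S}(\vec f,\vec s)$ so that
\[
\bigl|H_{n,\vec\al,\vec\be}(\vec f)(x)\bigr|\;\lesssim_{n,\vec\al,\vec\be,\,\vec s}\;\sum_{Q\in\mathcal{S}}\Bigl(\prod_{i=1}^{n}\langle|f_i|^{s_i}\rangle_{Q}^{1/s_i}\Bigr)\mathbf{1}_{Q}(x)\,.
\]
This is strictly stronger than the sparse \emph{form} bound that already suffices for $r>\tfrac12$, and it is precisely the type of statement one anticipates once all modulation invariance has been removed (for the resonant bilinear Hilbert transform such an operator-level bound is in fact \emph{false}, consistent with its smaller range); its proof would feed the Step 1 estimate into a genuinely multilinear stopping-time/time-frequency decomposition. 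Granting it, the elementary $L^{p}$ theory of multilinear sparse operators yields $H_{n,\vec\al,\vec\be}:L^{p_1}\times\cdots\times L^{p_n}\to L^{r}$ whenever $p_i>s_i$ and $\tfrac1r=\sum_i\tfrac1{p_i}$; letting $s_i\downarrow1$ (one admissible $\vec s$ per target tuple) covers exactly $1<p_i<\infty$, i.e.\ precisely $r>\tfrac1n$. For $\calM_{n,\vec\al,\vec\be}$ the same route is lighter: positivity of the kernel in \eqref{CnLMO} removes the oscillatory analysis, so via the bilinear base case, interpolation against the $L^{\infty}$-slots and the monotonicity already used in Theorem \ref{mainthm03} one reduces to the analogous sparse operator bound, with the $n$-smoothing inequality needed \emph{only} to push the averaging exponents $s_i$ down to $1$; this also returns the $p_i=\infty$, $r=\infty$ endpoints.

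\textbf{Step 3 (sharpness).} Once boundedness holds throughout \eqref{maxrange}, maximality reduces to the impossibility of the input endpoints $p_{i_0}\in\{1,\infty\}$ --- for $\calM_{n,\vec\al,\vec\be}$ the endpoint $p_{i_0}=\infty$ is in fact admissible (Theorem \ref{mainthm03}), so only $p_{i_0}=1$ is genuinely excluded --- while the constraint $r>\tfrac1n$ requires nothing, being automatically forced by $1<p_i<\infty$ together with $\sum_i\tfrac1{p_i}=\tfrac1r$. These endpoint failures are inherited from the linear theory: freezing every slot $i\neq i_0$ at a tall plateau $\mathbf{1}_{[-R,R]}$ --- equal to $1$ over the range on which the surviving one-variable integral in $t$ is supported --- reduces $H_{n,\vec\al,\vec\be}$, after the substitution $u=\be_{i_0}t^{\al_{i_0}}$, to a constant multiple of the classical Hilbert transform in $x$, and $\calM_{n,\vec\al,\vec\be}$ to a constant multiple of the corresponding Hardy--Littlewood-type maximal operator. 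Testing with $f_{i_0}=\operatorname{sgn}(\cdot)\,\mathbf{1}_{[-1,1]}\in L^{\infty}$ (whose Hilbert transform is unbounded) rules out $p_{i_0}=\infty$ for $H_{n,\vec\al,\vec\be}$; testing with $f_{i_0}=\delta^{-1}\mathbf{1}_{[0,\delta]}$ and letting $\delta\to0$ (an $L^{1}$-normalized bump whose Hilbert transform and maximal function have $L^{1}_{\mathrm{loc}}$-norms of size $\log\tfrac1\delta$) rules out $p_{i_0}=1$ for both operators.

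\textbf{Main obstacle.} The crux is Step 2: promoting the $n$-linear local smoothing inequality to an operator-level sparse domination with $L^{s_i}$-averages for $s_i$ arbitrarily close to $1$. The present paper deliberately halts at $r>\tfrac12$ precisely here, since no black-box use of a smoothing inequality suffices and one must build a bespoke multilinear stopping-time argument in its place; a distant second difficulty is establishing the Step 1 single-scale estimate uniformly in $n$ with a gain that still survives summation once $r$ is pushed all the way down to $\tfrac1n$.
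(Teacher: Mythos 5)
The statement you were asked to prove is a \emph{conjecture}, and the paper offers no proof of it: Remark \ref{range} states explicitly that it is settled only for $n=1,2$ and "seems highly non-trivial even in the case $n=3$," and Observation \ref{max range} pinpoints the obstruction as a \emph{single-scale} restricted weak-type estimate for the forms $\Lambda_m^k$ with $0\le k\le m$ (one would need \eqref{diaglargeboundsgsc} with $\theta_1+\theta_2+\theta_3$ allowed up to $3$ rather than $2$), which the authors expect to require additive-combinatorial or incidence-geometric input. Your proposal is therefore not, and does not claim to be, a proof; it is a research program whose decisive step is asserted rather than established.

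Concretely, the gap sits in your Step 2. The operator-level sparse domination with exponents $s_i\downarrow 1$ is exactly equivalent in strength to the conjectured range $r>1/n$, so invoking it is circular unless you can prove it; and the proposed input --- a single-scale bound with geometric gain $2^{-\eps|j|}$ in $L^r$ for $r$ all the way down to $1/n$ --- is precisely what is unknown. Your plan also does not engage with the paper's own diagnosis: the problem is not merely "summing the series below $r=1/2$" but that the individual pieces $\Lambda_m^k$, $0\le k\le m$, are not known to satisfy restricted weak-type bounds beyond the exponent constraint $\theta_1+\theta_2+\theta_3\le 2$ (Proposition \ref{Mainpropsinglesc}), which is what caps the output index at $r>1/2$. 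A sparse stopping-time scheme does not bypass this: any such scheme must still control those single scales near the quasi-Banach corners. Your Step 3 (sharpness) is fine in spirit --- the $p_{i_0}=1$ failure matches the remark at the end of Section 3 citing \cite{GL20} --- but it is ancillary. In short: no error of reasoning in what you wrote, but the conjecture remains unproved, and your Main Obstacle paragraph correctly identifies that the core of the argument is missing.
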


\begin{rem}\label{range}
Interestingly enough, the above conjecture is only settled in the cases $n=1$ and $n=2$ and seems highly non-trivial even in the case $n=3$. Indeed, for deciding the latter, it seems likely that one has to employ subtle techniques related to additive combinatorics/incidence geometry. 
\end{rem}

\subsection{A brief historical background and motivation}

The origin, evolution and motivation for the main topic of the present paper have been discussed in great detail in the introductory sections of both  \cite{HL23} and \cite{BHL25}. Consequently, in the interest of brevity, in this section we will only provide a very succinct outline:

As expected, the history of this subject follows a \emph{natural hierarchical complexity} which fundamentally depends on:
\begin{itemize} 
\item the \emph{multi-linear degree} $n$;

\item the \emph{non-zero/hybrid/zero curvature paradigm}.
\end{itemize}
With this in mind we consider three main cases:

\underline{\textsf{Case 1}: $n=1$.} $\quad$ In this situation \eqref{CTHT} corresponds to the classical Hilbert transform whose $L^p$-- boundedness, $1<p<\infty$, was provided by M. Riesz in \cite{MR28} while \eqref{CnLMO} corresponds to the classical Hardy-Littlewood maximal operator whose similar boundedness behavior was settled in \cite{HL30}.

\underline{\textsf{Case 2}: $n=2$.} $\quad$ We split the analysis of this case in three subcases:
\begin{itemize}
\item the \emph{zero curvature} setting corresponds to the situation when in both \eqref{CTHT} and \eqref{CnLMO} one takes $\al_1=\al_2$ and (in order to avoid degeneracy) $\be_1\not=\be_2$; consequently, both operators satisfy a modulation invariance relation. The singular version was motivated by A. Calder\'on's study of the Cauchy transform on Lipschitz curves (\cite{Cal}, \cite{CMM}). However, it took more than two decades until--building on the ground-breaking results of L. Carleson, (\cite{Car66}), and C. Fefferman, (\cite{f})--the first bounds were provided: indeed, in the breakthrough papers \cite{lt1}, \cite{lt2},  M. Lacey and C. Thiele proved the $L^{p_1}\times L^{p_2}\,\longrightarrow\,L^r$ boundedness of \eqref{CTHT} within the range $\frac{1}{p_1}+\frac{1}{p_2}=\frac{1}{r}$, $1<p,q\leq\infty$ and $\frac{2}{3}<r<\infty$. The analogous result for the maximal variant \eqref{CnLMO} was settled in \cite{la3}.
    
\item the \emph{non-zero curvature} setting corresponds to the situation when in both \eqref{CTHT} and \eqref{CnLMO} one takes $\al_1\not=\al_2$, or more generally, when one considers bilinear versions of \eqref{CnLMOext} under the same hypothesis as the one stated in Corollary \ref{th2ext}. The motivation for this research direction is twofold: as curved analogues of the singular/maximal versions mentioned at the item above, and, as continuous analogues of the ergodic theoretical problem regarding the norm and pointwise convergence of Furstenberg non-conventional bilinear averages. This direction was investigated in \cite{Li13}, \cite{LX16} for monomials and polynomials, respectively, and, via different methods, in \cite{Lie15}, \cite{Lie18}, \cite{GL20}, for larger classes of ``non-flat" curves that include generalized polynomials.\footnote{To be more precise, all the enumerated papers treated expressions similar to \eqref{CnLMOext} (or its singular integral analogue) but having the predetermined structure $P_1(t)=t$ and various forms of $P_2(t)$, \emph{e.g.} monomials, polynomials, fewnomials, and such that an analogue form of the non-zero curvature condition \eqref{nonrezcond} remains valid.}

\item the \emph{hybrid curvature} setting--as suggested by the name--combines both zero and non-zero curvature features and is motivated by the desire of better understanding the subtle phenomenon that governs the transition from the zero to non-zero curvature. In the context given by \eqref{CnLMOext}, such a situation exemplified by the choice  $P_1(t)=a_{1,1}\, t$ and  $P_2(t)=a_{2,1}\,t\,+\,a_{2,2}\,t^{\al}$ with $a_{1,1}\not=a_{2,1}$ and $\alpha\in(0,\infty)\setminus\{1,2\}$ was treated in \cite{GL22} by employing the Rank I LGC methodology introduced in \cite{Lie2024}. Notice that with this choice one subsumes the cases mentioned at the previous two items.\footnote{In particular we notice that \eqref{nonrezcond} is no longer satisfied since $A_1\cap A_2=\{1\}$.}
\end{itemize}

\underline{\textsf{Case 3}: $n\geq 3$.} $\quad$ Until very recently, irrespective of the curvature regime, nothing was known. In what follows, for brevity, we focus completely on the case $n=3$ and the singular integral formulation \eqref{CTHT}. All of the cases discussed below have deep inter-connections with the areas of time-frequency analysis, additive combinatorics, ergodic theory and number theory--for more an all these the interested reader in invited to consult the extended discussion in the introductory sections of \cite{HL23} and \cite{BHL25}. With these being said, as before, we split our analysis in three subcases: 
\begin{itemize}
\item the \emph{zero curvature} setting corresponds to the situation when $\al_1=\al_2=\al_3$ and (in order to avoid degeneracy) $\{\be_j\}_{j=1}^3$ pairwise distinct. In this context $H_{3,\vec{\al},\vec{\be}}$ represents the infamous trilinear Hilbert transform whose boundedness behavior constitutes one of the major open questions in harmonic analysis. One of the key difficulties is that this operator has both linear and quadratic modulation invariance features. To date, no boundedness range--in the affirmative--is known.
    
\item the \emph{hybrid curvature} setting is represented in this case by the situation when the set formed with $\{\al_j\}_{j=1}^3$ has cardinality two (and with the obvious non-degeneracy condition on  $\{\be_j\}_{j=1}^3$). In this situation $H_{3,\vec{\al},\vec{\be}}$ has only linear modulation invariance and thus contains as a particular case the behavior of the classical (zero-curvature) Bilinear Hilbert transform. The study of a closely related hybrid trilinear Hilbert transform is the subject of a forthcoming paper (\cite{BBL25}). 
    
\item the \emph{non-zero curvature} setting corresponds to the case when $\{\al_j\}_{j=1}^3$ are pairwise distinct. In this situation the boundedness of $H_{3,\vec{\al},\vec{\be}}$ in the Banach regime has been settled in \cite{HL23} via the introduction of the Rank II LGC methodology. Finally, a partial boundedness range extension to the quasi-Banach regime is treated in the present paper. 
\end{itemize}

\subsection{General philosophy and structure of the paper}

We start our section with the following antithetical discussion addressing our two main results:

\begin{itemize}

\item the main \emph{difference} between Theorem \ref{mainthm01} and Theorem \ref{mainthm03} originates in the extra monotonicity property \eqref{monot} of $\calM_{n,\vec{\al},\vec{\be}}$ as opposed to $H_{n,\vec{\al},\vec{\be}}$ and manifests in the fact that Theorem \ref{mainthm01} requires the full strength of a trlinear local smoothing inequality while Theorem \ref{mainthm03} only involves a bilinear such local smoothing inequality hence the general $n\geq 2$ character of statement \eqref{bdquasimax}. As a direct consequence, the proof of \eqref{bdquasitri} is significantly more involved than that of \eqref{bdquasimax};   

\item the main \emph{similarity} between Theorem \ref{mainthm01} and Theorem \ref{mainthm03} is the invariant nature of the range corresponding to the output Lebesgue index, that is $r\geq 1/2$. This is a consequence of the fact that the key obstruction in proving any bound with $r$ below the threshold $1/2$ resides in the necessity of proving a corresponding \emph{single scale} estimate which does not seem tractable with the tools employed in this paper--see Observation \ref{max range}. 
\end{itemize}

As a consequence of the above in what follows we only focus on the proof of Theorem \ref{mainthm01}, and very briefly review the three main ingredients used therein; these are:
\begin{itemize}
\item a trilinear local smoothing inequality proved in \cite{HL23}; 

\item a new \emph{curved trilinear square function} defined in \eqref{trilinsq} whose control in obtained in Proposition \ref{20231227prop01} via 
a careful analysis involving average shifted maximal operators (see \eqref{20231229eq20}) and shifted maximal square functions arguments--see the proof of \eqref{20231231eq01};

\item a restricted weak-type multilinear interpolation analysis which originates in a common body of works thoughtfully discussed in monograph \cite{MS13}.
\end{itemize}

We end this section with a succinct commentary on the structure of our paper. Leaving aside the current introductory section, our paper consists of two main sections:
\begin{itemize}
\item in Section 2 we discuss in detail the proof of Theorem \ref{mainthm01}: this is further divided in seven subsections with the main body of work being delivered in Subsection \ref{MaindiaglargeK}; 

\item in Section 3 we provide the short proof of Theorem \ref{mainthm03}.
\end{itemize}

\subsection*{Acknowledgments}
The authors would like to thank Fred Lin for carefully reading the manuscript and for providing useful comments.
The first author was supported by the Simons Travel grant MPS-TSM-00007213. The second author was supported by the NSF grant DMS-2400238 and by the Simons Travel grant MPS-TSM-00008072.

\section{The curved trilinear Hilbert transform}

In this section we focus on proving Theorem \ref{mainthm01}. 

\subsection{Preliminaries} 

We start our discussion with the following

\begin{rem} One consequence of the robustness of the LGC--methodology is that the main elements of our present approach are insensitive to the specific choices of the triples
\begin{itemize}
\item $\vec{\al}=(\al_1, \al_2, \al_3)$---as long as the key \emph{non-resonant} condition \ref{nonres} is satisfied;

\item $\vec{\be}=(\be_1, \be_2, \be_3)$ (assuming that all $\be_j$'s are nonzero).
\end{itemize}  
  Thus, in what follows, for expository reasons, we make the choice $\al_j=j$. Moreover, since the present work builds upon the main result in \cite{HL23}, for reader's convenience we adopt the notation therein by taking $\vec{\be}=(-1,\,1,\,1)$.
\end{rem}

With these settled, we start by recalling the spatial-frequency decomposition of the operator $H_{3,\vec{\al},\vec{\be}}(\vec{f})$ as described in \cite[Section 2]{HL23}. For this we consider its quadrilinear  associated form given by
\begin{equation} \label{20240215eq01}
\Lambda(\vec{f}):=\Lambda(f_1, f_2, f_3, f_4):=\int_{\R} \int_{\R} f_1(x-t)f_2(x+t^2)f_3(x+t^3)f_4(x) \frac{dt dx}{t}. 
\end{equation} 

Next, we split the above into smaller pieces which have better time-frequency localization properties:
$$
\Lambda(\vec{f})=\sum_{k \in \Z} \sum_{j, l, m \in \Z} \Lambda_{j, l, m}^k(\vec{f}), 
$$
where $\Lambda_{j, l, m}^k$ has the Fourier multiplier:
$$
\frakm_{j, l, m}^k(\xi, \eta, \tau):= \left[ \int_{\R} e^{i \left(- \xi t+\eta t^2+\tau t^3 \right)} 2^k \rho(2^k t) dt \right] \phi \left(\frac{\xi}{2^{j+k}} \right) \phi \left(\frac{\eta}{2^{l+2k}} \right) \phi \left(\frac{\tau}{2^{m+3k}} \right).
$$
Here, we have 
\begin{enumerate}
    \item [$\bullet$] $\rho \in C_0^\infty(\R)$ is odd with $\supp \rho \subseteq \left[-4, 4 \right] \backslash \left[-\frac{1}{4}, \frac{1}{4} \right]$ and $\sum\limits_{k \in \Z} 2^k \rho(2^k t)=\frac{1}{t}$; 
\item [$\bullet$] $\phi \in C_0^\infty(\R)$ is even with $\supp \phi \subseteq \left[-4, 4 \right] \backslash \left[-\frac{1}{4}, \frac{1}{4} \right]$ and $\sum\limits_{j, l, m \in \Z}  \phi \left(\frac{\xi}{2^{j+k}} \right)  \phi \left(\frac{\eta}{2^{l+2k}} \right)  \phi \left(\frac{\tau}{2^{m+3k}} \right)=1$; \\
\item [$\bullet$] $k \in \Z$ stands for the \emph{spatial} parameter: essentially, we have that $t\in \left[\frac{1}{2^k}, \frac{2}{2^k} \right]$; \\
\item [$\bullet$] $j, l, m \in \Z$ stand for the \emph{frequency} parameters: that is, the input functions for the form $\Lambda_{j, l, m}^k(\vec{f})$ obey $\supp \widehat{f_1} \subseteq \left[2^{j+k}, 2^{j+k+1} \right]$, $\supp \widehat{f_2} \subseteq \left[2^{l+2k}, 2^{l+2k+1} \right]$, $\supp \widehat{f_3} \subseteq \left[2^{m+3k}, 2^{m+3k+1} \right]$, and consequently, $\supp \widehat{f_4} \subseteq \supp \left[2^{j+k}, 2^{j+k+1} \right]+\left[2^{l+2k}, 2^{l+2k+1} \right]+ \left[2^{m+3k}, 2^{m+3k+1}\right]$. 
\end{enumerate}
Further, we decompose $\Lambda_{j, l, m}^k(\vec{f})$ as follows:  
$$
\Lambda=\Lambda^{Hi}+\Lambda^{Lo}=\Lambda_{+}^{Hi}+\Lambda_{-}^{Hi}+\Lambda^{Lo}, \quad \quad \textrm{with} 
$$
\begin{enumerate}
    \item [$\bullet$] the \emph{low oscillatory} component defined by
    $$
\Lambda^{Lo}:=\sum_{k \in \Z} \sum_{(j, l, m) \in \Z^3 \backslash \N^3} \Lambda_{j, l, m}^k;
    $$
    \item [$\bullet$] the \emph{high oscillatory} component defined by 
    $$
    \Lambda^{Hi}:=\sum_{k \in \Z} \sum_{(j, l, m) \in \N^3} \Lambda_{j, l, m}^k, \quad \quad \textrm{and} \quad \quad \Lambda_{\pm}^{Hi}:=\sum_{k \in \Z_{\pm}} \sum_{(j, l, m) \in \N^3} \Lambda_{j, l, m}^k.
    $$
\end{enumerate}
Furthermore, we decompose $\Lambda^{Hi}$ as
\begin{equation} \label{hidec} 
\Lambda^{Hi}=\Lambda^D+\Lambda^{\not D}, \quad \quad \textrm{with} 
\end{equation}
\begin{enumerate}
    \item [$\bullet$] $\Lambda^D$ the \emph{diagonal} component having the frequency parameter range satisfying 
  \begin{equation} \label{diagc}  
    \max\{j, l, m\}-\min\{j, l, m\} \le 300\,;
  \end{equation}  
    \item [$\bullet$] $\Lambda^{\not D}$ the \emph{off-diagonal} component having the frequency parameter range satisfying
   \begin{equation} \label{offdiagc}   
     \max\{j, l, m\}-\min\{j, l, m\}>300\,.
   \end{equation}  
    
\end{enumerate}
In the obvious manner one can extend the above definitions to $\Lambda_{\pm}^D$ and $\Lambda_{\pm}^{\not D}$. 

Once here, we further subdivide the off-diagonal components into two parts:
$$
\Lambda^{\not D}=\Lambda^{\not D, S}+\Lambda^{\not D, NS}, \quad \quad \textrm{with} 
$$
\begin{enumerate}
    \item [$\bullet$] $\Lambda^{\not D, S}$ the \emph{stationary} off-diagonal component addressing the stationary phase regime, which, by \cite[Lemma 2.1]{HL23}, corresponds to the following three situations\footnote{For any $j, l \in \Z$, $j \cong l$ means $|j-l|<20$.}:
    \begin{enumerate}
        \item [(1)] $j \cong l$ and $j>m+100$;
        \item [(2)] $l \cong m$ and $l>j+100$;
        \item [(3)] $m \cong j$ and $m>l+100$.
    \end{enumerate}
    \item [$\bullet$] $\Lambda^{\not D, NS}$ the \emph{non-stationary} off-diagonal component addressing the non-stationary phase regime, which, by \cite[Lemma 2.2]{HL23}, implies that the phase of the multiplier corresponding to the form $\Lambda_{j, l, m}^k$ obeys
    $$
    \left| \frac{d}{dt}  \varphi_{\xi, \eta, \tau}^k\right| \gtrsim 2^j+2^l+2^m, \quad \textrm{where} \ \varphi_{\xi, \eta, \tau}^k(t):= -\frac{\xi}{2^k}t+\frac{\eta}{2^{2k}}t^2+\frac{\tau}{2^{3k}} t^3.
    $$
\end{enumerate}
With these done, we achieve the decomposition
$$
\Lambda=\Lambda_{\pm}^D+\Lambda_{\pm}^{\not D, S}+\Lambda_{\pm}^{\not D, NS}+\Lambda^{Lo}. 
$$
Finally, we summarize all the $L^p$ estimates that have been obtained in \cite{HL23}:
\begin{enumerate}
    \item [(a)] For the low oscillatory component, one has
    $$
    \left| \Lambda^{Lo}(\vec{f}) \right| \lesssim \|\vec{f} \|_{L^{\vec{p}}}:=\prod_{i=1}^4 \left\|f_i\right\|_{L^{p_i}}
    $$
    for $\vec{p}:=(p_1, p_2, p_3, p_4) \in {\bf B}:=\left\{(p_1, p_2, p_3, p_4 ) \big | \sum\limits_{j=1}^4 \frac{1}{p_j}=1 \ \textrm{satisfying}  \ 1<p_1, p_3<\infty, 1<p_2, p_4 \le \infty \right\}$ (see, \cite[Theorem 10.1]{HL23}).
    \item [(b)] For the non-stationary off-diagonal component, one has
    $$
    \left| \Lambda^{\not D, NS}(\vec{f}) \right| \lesssim \| \vec{f} \|_{L^{\vec{p}}} 
    $$
    for $\vec{p} \in \overline{{\bf B}}:=\left\{(p_1, p_2, p_3, p_4) \big | \sum\limits_{j=1}^4 \frac{1}{p_j}=1 \ \textrm{and} \ 1<p_j \le \infty \right\}$
    (see, \cite[Lemma 2.3]{HL23}).
    \item [(c)] The main estimate in \cite{HL23} addresses the stationary component. More precisely, it is shown that there exists $\epsilon>0$ such that for all $(j,l,m)\in \N^3$ one has
    $$
    \left| \Lambda_{j, l, m}^k(\vec{f}) \right |\lesssim 2^{-\epsilon \max\{\min\{2|k|, \max\{j, l, m\} \}, \max\{|j-l|, |l-m|, |j-m| \} \}} \|\vec{f} \|_{L^{\vec{p}}},
    $$
    for $\vec{p} \in {\bf H}^{+}:=\left\{ \vec{p} \in \overline{{\bf B}} \ \big | \ \substack{p_1=2 \ \& \\ (p_2, p_3, p_4) \in \left\{2, \infty \right\}} \right\}$ if $k \ge 0$ and $\vec{p} \in {\bf H}^{-}:=\left\{\vec{p} \in \overline{{\bf B}} \ \big | \ \substack{ p_3=2 \ \& \\  (p_1, p_3, p_4) \in \left\{2, \infty \right\}}  \right\}$ if $k<0$ (see, \cite[(12.3)]{HL23}). 
    
    As a consequence, one obtains
    $$
    \left|\Lambda_{+}^D (\vec{f}) \right|, \left| \Lambda_{+}^{\not D, S} (\vec{f}) \right| \lesssim \|\vec{f}\|_{L^{\vec{p}}}
    \quad \textrm{for} \  \vec{p} \in {\bf B}^{+}:=\left\{ \substack{(p_1, p_2, p_3, p_4) \\ 1<p_i \le \infty, i \in \{1, 2, 3, 4\} \\ p_1 \neq \infty}: \frac{1}{p_1}+\frac{1}{p_2}+\frac{1}{p_3}+\frac{1}{p_4}=1 \right\},
    $$ 
    and 
    $$
    \left|\Lambda_{-}^D (\vec{f}) \right|, \left| \Lambda_{-}^{\not D, S} (\vec{f}) \right| \lesssim \|\vec{f}\|_{L^{\vec{p}}} \quad \textrm{for} \  \vec{p} \in {\bf B}^{-}:=\left\{ \substack{(p_1, p_2, p_3, p_4) \\ 1<p_i \le \infty, i \in \{1, 2, 3, 4\} \\ p_3 \neq \infty}: \frac{1}{p_1}+\frac{1}{p_2}+\frac{1}{p_3}+\frac{1}{p_4}=1 \right\}.
    $$
\end{enumerate}

\subsection{Treatment of the main diagonal (multiscale) component $\Lambda_m^{\ge}(\vec{f}):=\sum_{k \ge m} \Lambda_{m}^k(\vec{f})$.}\label{MaindiaglargeK}

We start this section with few clarifications about the notation that will be used in what follows: recalling \eqref{hidec} and \eqref{diagc} we let

\begin{equation}\label{diaglarge}
\Lambda_{+}^{D}\approx\sum_{k \in \N} \sum_{m \in \N} \Lambda_{m}^k=\sum_{m\in\N}\underbrace{\sum_{0\leq k<m} \Lambda_{m}^k}_{:=\Lambda_m^{\le}}\,+\, \sum_{m\in\N}\underbrace{\sum_{k\geq m} \Lambda_{m}^k}_{:=\Lambda_m^{\ge}}=:\Lambda_+^{D, \le}\,+\,\Lambda_+^{D, \ge}\,.
\end{equation}

Our main goal in this section is to prove the following 

\begin{prop} \label{Mainprop} Fix $m\in\N$ and let $\{F_j\}_{j=1}^{4}$ be any real measurable sets of finite measure. Then, there exists $F_4'\subseteq F_4$ major subset, \emph{i.e.} $|F_4'| \ge \frac{1}{2} |F_4|$, such that for any $|f_j|\leq \chi_{F_j}$ with $1\leq j\leq 3$ and $|f_4|\leq \chi_{F'_4}$ and any $\vec{\theta}=(\theta_1, \theta_2, \theta_3)$ with $0<\theta_j<1$ the following holds:
\begin{equation}\label{diaglargebound}
\left|\Lambda_m^{\ge}(\vec{f})\right|\lesssim_{\vec{\theta}} m^{10}\, |F_1|^{\theta_1}\,|F_2|^{\theta_2}\,|F_3|^{\theta_3}\,|F_4|^{1-\theta_1-\theta_2-\theta_3}\,.
\end{equation}
\end{prop}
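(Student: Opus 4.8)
The strategy is to realize $\Lambda_m^{\ge}$ as a superposition over the spatial scales $k\geq m$ of single-scale multilinear forms, then exploit two complementary pieces of information: (i) the single-scale decay estimate (c) above, which for the diagonal parameter $(j,l,m)$ with $\max-\min\leq 300$ gives $|\Lambda_{j,l,m}^k(\vec f)|\lesssim 2^{-\epsilon\min\{2|k|,\max\{j,l,m\}\}}\|\vec f\|_{L^{\vec p}}$ on the Hölder-type tuples $\vec p\in\mathbf H^{+}$; and (ii) the trilinear local smoothing inequality from \cite{HL23}, which upgrades the single-scale bounds into an estimate for the full multiscale sum $\sum_{k\geq m}$ with only polynomial-in-$m$ loss. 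Concretely, I would first fix the diagonal index and write $\Lambda_m^k=\sum_{j,l:\,|j-m|,|l-m|\leq 300}\Lambda_{j,l,m}^k$ so that, up to a bounded number of terms, all three frequency indices are comparable to $m$. The task then reduces to controlling $\sum_{k\geq m}\Lambda_{j,l,m}^k(\vec f)$ for each such $(j,l)$, uniformly in the $O(1)$ choices, and summing.

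\textbf{Reduction to a square-function / shifted-maximal estimate.} The heart of the argument is to pass from the oscillatory single-scale forms to a positive object. Following the philosophy sketched in the introduction, I would introduce the curved trilinear square function (the one referenced as \eqref{trilinsq}) attached to the frequency scale $\sim m$ and dominate $|\sum_{k\geq m}\Lambda_{j,l,m}^k(\vec f)|$ by a paired expression of the form $\int \big(\text{square function in }f_1,f_2,f_3\big)(x)\,|f_4(x)|\,dx$, after freezing the output function $f_4$ on a major subset $F_4'\subseteq F_4$ — the passage to a major subset is exactly what makes the exceptional-set removal in the restricted-weak-type machinery legitimate, and it is standard to arrange $|F_4'|\geq\frac12|F_4|$ by discarding the set where a suitable maximal function of $\chi_{F_4}$ is large. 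The square function is then controlled via Proposition \ref{20231227prop01}, whose proof, per the excerpt, rests on average shifted maximal operators \eqref{20231229eq20} and shifted maximal square function estimates \eqref{20231231eq01}; the polynomial loss $m^{10}$ is precisely the price paid for summing the shifts/scales and for the $\log$-type losses in those shifted-maximal bounds. Since each $f_j$ is bounded by $\chi_{F_j}$, evaluating the resulting $L^{p}$ norms of indicator functions converts everything into powers of $|F_j|$.

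\textbf{Interpolation to reach arbitrary exponents.} Once I have, for the Hölder tuples appearing in $\mathbf H^{+}$ (i.e. $p_1=2$ and $(p_2,p_3,p_4)\in\{2,\infty\}$), an estimate of the shape $|\Lambda_m^{\ge}(\vec f)|\lesssim m^{10}\,|F_1|^{1/p_1}|F_2|^{1/p_2}|F_3|^{1/p_3}|F_4'|^{1/p_4}$, I would obtain the claimed bound \eqref{diaglargebound} with a general admissible $\vec\theta=(\theta_1,\theta_2,\theta_3)$, $0<\theta_j<1$, by multilinear (restricted weak-type) interpolation among the finitely many vertices of that family — this is the body of work thoughtfully organized in \cite{MS13}. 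The constraint that the exponents be $\{2,\infty\}$ and that $p_1$ (for $k\geq 0$) be the distinguished index is what localizes us to the corner tuples; the convex hull of those corners, after the interpolation, is an open region in the simplex $\{\theta_1+\theta_2+\theta_3<1\}$ large enough to contain any prescribed $\vec\theta$ with $0<\theta_j<1$, which is exactly the statement of the proposition (the $|F_4|^{1-\theta_1-\theta_2-\theta_3}$ factor absorbs $|F_4'|\le|F_4|$).

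\textbf{Main obstacle.} The routine parts are the frequency localization to the diagonal, the passage to the major subset, and the final interpolation. The genuine difficulty — and the step I expect to absorb almost all the work — is establishing the multiscale control of the curved trilinear square function uniformly over $k\geq m$ with only a polynomial loss in $m$: one must simultaneously exploit the single-scale smoothing decay $2^{-\epsilon\min\{2k,m\}}$ for the ``bad'' regime $k\lesssim m$ and the genuine trilinear local smoothing inequality for the ``good'' regime $k\gtrsim m$, while handling the shifts introduced by the curvature through the $\ell^2$-bounded families of average shifted maximal operators. This is where the quasi-Banach threshold $r>1/2$ is dictated: the single-scale estimate underlying the square-function bound does not improve below that exponent, so no amount of cleverness in the multiscale summation pushes past $r=1/2$, which is why the same restriction reappears here as in Theorem \ref{mainthm01}.
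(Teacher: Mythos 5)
Your high-level plan correctly identifies the curved trilinear square function, the shifted-maximal machinery, and Proposition \ref{20231227prop01} as the engine of the proof, but several pivotal structural choices are stated backwards and the final interpolation step cannot close. First, after the discretization the paper applies Cauchy--Schwarz so as to produce the pairing $\Lambda_m^{\ge}(\vec f)\lesssim\int\calT(f_1,f_2,f_4)\,\calS f_3$: the trilinear square function carries $f_1,f_2$ and the \emph{output} function $f_4$ (because the averaging shift $p^3/2^{2m}$ is moved from the $f_3$-slot onto the $f_4$-slot), while $f_3$ enters only through the plain discrete square function $\calS f_3$. Your version pairs a square function in $f_1,f_2,f_3$ against $|f_4|$ linearly; this scrambles the exceptional-set construction, which in the paper lives precisely in $\Omega_2,\Omega_3$ — sets defined by the local density of $F_2$ relative to $F_4$ and by the average shifted maximal operators \eqref{20231229eq20} applied to $\one_{F_2}$ — so that $f_4=\one_{F_4'}$ (not $f_3$) is the function fed into the square function on the good set. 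Your claim that the major subset is obtained ``by discarding the set where a suitable maximal function of $\chi_{F_4}$ is large'' has the roles inverted.

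Second, the closing interpolation you propose cannot reach the asserted range. Interpolating among the restricted-type vertices of $\mathbf H^{+}$ (with $p_1=2$ and $(p_2,p_3,p_4)\in\{2,\infty\}$) lands you in the convex hull of the three points $\theta=(\tfrac12,\tfrac12,0),(\tfrac12,0,\tfrac12),(\tfrac12,0,0)$, a two-dimensional triangle confined to the slice $\theta_1=\tfrac12$ — nowhere near the open cube $(0,1)^3$. The paper never uses the $\mathbf H^{+}$ corners at this stage: Proposition \ref{20231227prop01} directly delivers $\|\calT(f_1,f_2,f_4)\|_{L^p}\lesssim m^{10}\tfrac{|F_1|}{|F_4|}\tfrac{|F_2|}{|F_4|}|F_4|^{1/p}$ for \emph{every} $1<p<\infty$, which paired with $\|\calS f_3\|_{L^{p'}}\lesssim|F_3|^{1-1/p}$ already gives the restricted weak-type conclusion without any further interpolation; the $\mathbf H^{\pm}$ $L^2$-smoothing bounds from \cite{HL23} enter only later (in the final subsection) to interpolate against \eqref{20231112eq301}. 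Finally, your ``main obstacle'' paragraph misattributes the $r>\tfrac12$ barrier: Observation \ref{max range} states explicitly that $\Lambda_+^{D,\ge}$ — the object in this proposition — satisfies the \emph{full} conjectural range, and the $r>\tfrac12$ constraint in Theorem \ref{mainthm01} is imposed by the complementary single-scale regime $\Lambda_m^k$ with $0\le k\le m$ (treated in Proposition \ref{Mainpropsinglesc} with the restriction $\theta_1+\theta_2+\theta_3\le 2$), not by the multiscale square-function estimate proved here.
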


\begin{obs} \label{max range} The maximal range stated in Conjecture \ref{maxrange} for the case $n=3$ holds for all the subcomponents of $\Lambda$ but $\Lambda_{\pm}^{D, \leq}$. In particular, we have the following:
\begin{itemize}
\item $\Lambda_{+}^{D, \geq}$ maps boundedly $L^{p_1}(\R) \times L^{p_2}(\R) \times L^{p_3}(\R)\times L^{p_4}(\R)\longmapsto \mathbb{C}$ for any $\sum_{j=1}^4\frac{1}{p_j}=1$ with $1<p_1,\,p_2,\,p_3<\infty$ and hence \eqref{maxrange} holds for this component;

\item the current range restriction in the statement of Theorem \ref{mainthm01}, \textit{i.e.} $r>\frac{1}{2}$, is precisely due to our limited understanding of the behavior of $\Lambda_{+}^{D, \leq}$.
\end{itemize}
Deduce thus that in order to achieve the maximal (up to end-points) conjectural range $r>\frac{1}{3}$ for the full form $\Lambda$ it is enough to prove the analogue of \eqref{diaglargebound} for the \emph{single scale} form $\Lambda_{m}^k$ for any $0\leq k\leq m$. However, this latter task, seems to involve very delicate arguments relying on additive combinatorics/incidence geometry. 
\end{obs}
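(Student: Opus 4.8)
The plan is to reduce both displayed assertions, together with the concluding remark, to one uniform claim about restricted weak-type estimates. Recalling the decomposition
$$\Lambda\ \approx\ \Lambda_{\pm}^{D,\le}+\Lambda_{\pm}^{D,\ge}+\Lambda_{\pm}^{\not D, S}+\Lambda_{\pm}^{\not D, NS}+\Lambda^{Lo}\,,$$
I claim that every summand \emph{other than} $\Lambda_{\pm}^{D,\le}$ satisfies the \emph{global} version of \eqref{diaglargebound}: for all real measurable sets $F_1,\dots,F_4$ of finite measure there exists a major subset $F_4'\subseteq F_4$ such that, for $|f_j|\le\chi_{F_j}$ $(j=1,2,3)$, $|f_4|\le\chi_{F_4'}$, and \emph{all} $\vec{\theta}=(\theta_1,\theta_2,\theta_3)\in(0,1)^3$,
\begin{equation}\label{planA}
\left|\Lambda^{\sharp}(\vec f)\right|\lesssim_{\vec{\theta}}|F_1|^{\theta_1}\,|F_2|^{\theta_2}\,|F_3|^{\theta_3}\,|F_4|^{1-\theta_1-\theta_2-\theta_3}\,.
\end{equation}
Since $\theta_j=\frac1{p_j}$, $1-\theta_1-\theta_2-\theta_3=1-\frac1r$, and the open cube $(0,1)^3$ corresponds exactly to $\{1<p_1,p_2,p_3<\infty,\ \frac13<r<\infty\}$, the multilinear restricted weak-type interpolation machinery of \cite{MS13} turns \eqref{planA} into the bound $L^{p_1}\times L^{p_2}\times L^{p_3}\to L^{r}$, on the full conjectural range, for the trilinear operator associated with $\Lambda^{\sharp}$; this gives the first bullet and its symmetric/componentwise analogues. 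For the second bullet I show in parallel that $\Lambda_{\pm}^{D,\le}$ satisfies \eqref{planA} only on the sub-cube $\{\theta_1+\theta_2+\theta_3<2\}$, equivalently only for $r>\frac12$, which is exactly the restriction appearing in Theorem \ref{mainthm01}.

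For $\Lambda_{\pm}^{D,\ge}=\sum_{m}\Lambda_m^{\ge}$ the claim is essentially Proposition \ref{Mainprop}: \eqref{diaglargebound} \emph{is} \eqref{planA} for a single block, up to the loss $m^{10}$, and the summation over $m$ is carried out by interpolating \eqref{diaglargebound} against the exponentially decaying estimate $|\Lambda_m^{\ge}(\vec f)|\lesssim 2^{-\epsilon m}\|\vec f\|_{L^{\vec p}}$, $\vec p\in{\bf H}^{+}$. The latter follows from \cite[(12.3)]{HL23} --- on the diagonal and for $k\ge m$ the stationary-phase gain reads $2^{-\epsilon\min\{2k,m\}}=2^{-\epsilon m}$ --- after a Cauchy--Schwarz in $k$ which is lossless because the frequency supports $\supp\widehat{f_1}\subseteq[2^{m+k},2^{m+k+1}]$ are pairwise disjoint; a convex combination of the two bounds then costs only $m^{10(1-s)}2^{-\epsilon s m}$, summable in $m$, so \eqref{planA} holds for $\Lambda_{\pm}^{D,\ge}$ on the whole cube. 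The three remaining good pieces are handled by structural reductions that, in contrast with the square-function argument below, do reach $\theta_4<0$: off the stationary diagonal, $\Lambda_{\pm}^{\not D, NS}$ and $\Lambda^{Lo}$ reduce --- through the non-stationary integration by parts of \cite[Lemma 2.3]{HL23}, respectively through the smoothness and homogeneity of the low-oscillatory multiplier, combined with freezing the input whose frequency sits at or below its natural scale --- to finite sums of classical trilinear Coifman--Meyer paraproducts, whose sharp quasi-Banach range is precisely $\frac13<r<\infty$; the frequency parameters are summed using the (super-polynomial, for these pieces) decay in $\max\{|j-l|,|l-m|,|j-m|\}$ provided by \cite[Lemma 2.3, Theorem 10.1]{HL23}. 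For $\Lambda_{\pm}^{\not D, S}$ one proceeds similarly, the decay and the full-range estimate now both coming from the stationary-phase bound \cite[(12.3)]{HL23}, which in the stationary off-diagonal regime isolates one much lower frequency and again lands in the paraproduct range.

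It remains to explain why $\Lambda_{\pm}^{D,\le}=\sum_m\sum_{0\le k<m}\Lambda_m^k$ stalls at $r>\frac12$. Here the single-scale pieces $\Lambda_m^k$ are not yet understood well enough to be summed directly over $m$; instead one controls the whole sum through the curved trilinear square function of \eqref{trilinsq} (Proposition \ref{20231227prop01}) together with the associated average-shifted maximal operators used to select the major subset $F_4'$. This route yields \eqref{planA}, but only on the sub-cube $\{\theta_1+\theta_2+\theta_3<2\}$, i.e. only for $r>\frac12$: passing from the individual scales to their $\ell^2$-in-$k$ aggregate costs exactly one unit in the admissible $F_4$-exponent, since the maximal operator governing the selection of $F_4'$ is itself controlled only down to $r=\frac12$. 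This is the sole source of the range restriction in Theorem \ref{mainthm01}. It is therefore enough to bound a \emph{single} block $\Lambda_m^k$, $0\le k\le m$, in the global form \eqref{diaglargebound} (a loss polynomial in $m$, uniform in $k$, being permitted): one then sums trivially over the $O(m)$ values of $k$ and over $m$ by the interpolation-with-decay device of the previous paragraph, obtaining \eqref{planA} for $\Lambda_{\pm}^{D,\le}$ on the full cube and with it Theorem \ref{mainthm01} in the range $r>\frac13$.

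The hard part, which we do not attempt here, is precisely this single-scale estimate. At one physical scale the form $\Lambda_m^k$ has no cross-scale orthogonality to exploit, and pushing the $F_4$-exponent below $-1$ is tantamount to a sharp count of the number of $\delta$-separated tubes through a common point that are compatible with the curve $\vec{\be}\,t^{\vec{\al}}$ --- an incidence-geometric / additive-combinatorial question of the same flavour as the central obstruction for the (zero-curvature) trilinear Hilbert transform. The multi-scale wave-packet / LGC apparatus of the present paper, which draws its strength precisely from orthogonality across scales, does not by itself deliver such a bound, and this is why the range in Theorem \ref{mainthm01} is capped at $r>\frac12$.
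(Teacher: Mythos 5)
Your overall skeleton is the right one and coincides with the paper's implicit argument: full-cube restricted weak-type estimates (with polynomial loss in the frequency parameter) for every piece except the low-$k$ diagonal scales, interpolation against the exponentially decaying $\mathbf{H}^{+}$ estimates of \cite[(12.3)]{HL23} to sum the frequency parameters and absorb the polynomial losses, and the bookkeeping observation that a single-scale bound of type \eqref{diaglargebound} for $\Lambda_m^k$, $0\le k\le m$, would cost only an extra factor $O(m)$ and hence complete the range $r>\frac13$. Your treatment of $\Lambda_{+}^{D,\ge}$ via Proposition \ref{Mainprop} plus interpolation is exactly the paper's route to the first bullet.

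However, your diagnosis of where and why the argument stalls at $r>\frac12$ is concretely wrong. You assert that $\Lambda_{\pm}^{D,\le}$ is controlled ``through the curved trilinear square function of \eqref{trilinsq}'' and that the loss of one unit in the $F_4$-exponent comes from the $\ell^2$-in-$k$ aggregation and the shifted maximal operators selecting $F_4'$. In the paper it is the opposite: the square function \eqref{trilinsq} and Proposition \ref{20231227prop01} are the tools for $k\ge m$, and there they deliver the \emph{full} cube $(0,1)^3$. The regime $0\le k\le m$ is handled by the elementary single-scale argument of Proposition \ref{Mainpropsinglesc}, and the constraint $\theta_1+\theta_2+\theta_3\le 2$ arises because that argument discards one of the three input averages (bounding, e.g., $\frac{1}{|I|}\int_I|f_2|$ trivially by $1$) and then interpolates among the three resulting two-function bounds $|F_{i_1}|\,|F_{i_2}|\,|F_4|^{-1}$, whose convex hull is exactly $\{\theta_1+\theta_2+\theta_3\le 2\}$. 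The obstruction is thus a genuinely single-scale phenomenon --- no available argument uses all three functions nontrivially at one scale --- which is precisely why the Observation reduces the conjectural range to a single-scale estimate rather than to a better aggregation device. Secondarily, your reduction of $\Lambda^{\not D}$ and $\Lambda^{Lo}$ to ``classical trilinear Coifman--Meyer paraproducts'' is not what the paper does and would fail as stated: the low-oscillatory component genuinely contains the maximal anisotropic paraproduct $\calP_{\frakM}$ and the curved bilinear Hilbert transform $B_{l_3}$, whose quasi-Banach bounds ($r>\frac12$ as bilinear operators) are imported as black boxes and then composed with the remaining input in Proposition \ref{Mainproplow}; these curved bilinear objects are not flat paraproducts.
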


With these being said we pass now to the proof of Proposition \ref{Mainprop}. Our first step is to properly discretize \eqref{diaglarge}. In order to do so, we start by noticing that
\begin{eqnarray*}
&& \Lambda_m^k (\vec{f} ) =\int_{\R^2} \left(f_1* \check{\phi}_{m+k}\right)(x-t) \left(f_2* \check{\phi}_{m+2k}\right)(x+t^2) \left(f_3*\check{\phi}_{m+3k}\right)(x+t^3) \left(f_4*\check{\phi}_{m+3k}\right)(x) \rho_k(t)dtdx \\
&&= \int_{\R^2} \left(f_1* \check{\phi}_{m+k}\right)\left(x-\frac{t}{2^k} \right) \left(f_2* \check{\phi}_{m+2k}\right)\left(x+\frac{t^2}{2^{2k}} \right) \left(f_3*\check{\phi}_{m+3k}\right)\left(x+\frac{t^3}{2^{3k}} \right) \left(f_4*\check{\phi}_{m+3k}\right)(x) \rho(t)dtdx.
\end{eqnarray*}
By the uncertainty principle, we have for each $i=1, 2, 3$, $f_i * \check{\phi}_{m+ik}$ is morally a constant on intervals of length $2^{-m-ik}$. Therefore, this gives a first discretization of our operator $\Lambda_m^k (\vec{f})$ relative to the $x$-variable:
\begin{eqnarray} \label{20230920eq01}
&& \Lambda_m^k(\vec{f}) \simeq \frac{1}{2^{m+3k}} \sum_{z \in \Z} \int_{\R} \left(f_1* \check{\phi}_{m+k}\right)\left(\frac{z}{2^{m+3k}}-\frac{t}{2^k} \right) \left(f_2* \check{\phi}_{m+2k}\right)\left(\frac{z}{2^{m+3k}}+\frac{t^2}{2^{2k}} \right) \nonumber \\
&& \quad \quad \quad \quad \cdot \left(f_3*\check{\phi}_{m+3k}\right)\left(\frac{z}{2^{m+3k}}+\frac{t^3}{2^{3k}} \right) \left(f_4*\check{\phi}_{m+3k}\right)\left(\frac{z}{2^{m+3k}} \right) \rho(t)dt.
\end{eqnarray}
Next by a second discretization within the $t$-variable, by letting $[1,2]=\bigcup_{p\sim 2^m} \left[\frac{p}{2^{m}}, \frac{p+1}{2^{m}} \right]$ and $z\equiv v$, we have
\begin{eqnarray}\label{bddiagklarge}
&& \Lambda_m^{\ge}(\vec{f})= \left|\sum_{k \ge m} \Lambda_m^k(\vec{f})\right| \lesssim \frac{1}{2^m} \sum_{p \sim 2^m} \sum_{\substack{k \ge m \\ r \in \Z}} \frac{1}{\left|I_r^{m+k} \right|^{\frac{1}{2}}} \left| \left \langle f_1, \Phi_{P_{m+k} \left(r-p \right)} \right \rangle \right| \Bigg[ \sum_{I_u^{m+2k} \subseteq I_r^{m+k}} \frac{1}{\left|I_u^{m+2k} \right|^{\frac{1}{2}}} \nonumber \\ 
&& \quad \quad \quad \quad \quad  \left| \left \langle f_2, \Phi_{P_{m+2k} \left(u+\frac{p^2}{2^m} \right)} \right \rangle \right| \left( \sum_{I_v^{m+3k} \subseteq I_u^{m+2k}} \left| \left \langle f_3, \Phi_{P_{m+3k}\left(v+\frac{p^3}{2^{2m}} \right)} \right \rangle \right| \left | \left \langle f_4, \Phi_{P_{m+3k}(v)} \right \rangle \right| \right) \Bigg ]
\end{eqnarray}
where $I_r^{m+k}:=\left[\frac{r}{2^{m+k}}, \frac{r+1}{2^{m+k}} \right]$, $P_{m+k}(v)$ is the Heisenberg time-frequency tile given by $P_{m+k}(v):=\left[\frac{v}{2^{m+k}}, \frac{v+1}{2^{m+k}} \right] \times \left[2^{m+k}, 2^{m+k+1} \right]$, and $\Phi_{P_{m+k}(v)}$ is the $L^2$-normalized wave packet adapted to it.

Observe that if $I_v^{m+3k} \subseteq I_u^{m+2k}$, then by the assumption $k \ge m$, one has $I_{v+\frac{p^3}{2^{2m}}}^{m+3k} \subseteq 2I_u^{m+2k}$. Therefore, without loss of generality, we may write 
\begin{eqnarray*}
&& \Lambda_m^{\ge}(\vec{f})\lesssim\frac{1}{2^m} \sum_{p \sim 2^m} \sum_{\substack{k \ge m \\ r \in \Z}} \frac{1}{\left|I_r^{m+k} \right|^{\frac{1}{2}}} \left| \left \langle f_1, \Phi_{P_{m+k} \left(r-p \right)} \right \rangle \right| \Bigg[ \sum_{I_u^{m+2k} \subseteq I_r^{m+k}} \frac{1}{\left|I_u^{m+2k} \right|^{\frac{1}{2}}}  \\ 
&& \quad \quad \quad \quad \quad  \left| \left \langle f_2, \Phi_{P_{m+2k} \left(u+\frac{p^2}{2^m} \right)} \right \rangle \right| \left( \sum_{I_v^{m+3k} \subseteq I_u^{m+2k}} \left| \left \langle f_3, \Phi_{P_{m+3k}\left(v \right)} \right \rangle \right| \left | \left \langle f_4, \Phi_{P_{m+3k}\left(v-\frac{p^3}{2^{2m}}\right)} \right \rangle \right| \right) \Bigg ]\\
&& \quad = \int_{\R} \sum_{\substack{k \ge m \\ r \in \Z}} \sum_{\substack{I_u^{m+2k} \subseteq I_r^{m+k} \\ I_v^{m+3k} \subseteq I_u^{m+2k}}} \left( \frac{1}{2^m} \sum_{p \sim 2^m} \frac{\left| \left \langle f_1, \Phi_{P_{m+k} \left(r-p \right)} \right \rangle \right|}{\left|I_r^{m+k} \right|^{\frac{1}{2}}} \frac{\left| \left \langle f_2, \Phi_{P_{m+2k} \left(u+\frac{p^2}{2^m} \right)} \right \rangle \right|}{\left|I_u^{m+2k} \right|^{\frac{1}{2}}} \frac{\left| \left \langle f_4, \Phi_{P_{m+3k} \left(v-\frac{p^3}{2^{2m}} \right)} \right \rangle \right|}{\left|I_v^{m+3k} \right|^{\frac{1}{2}}} \right) \\
&& \quad \quad \quad \cdot \frac{\left|\left \langle f_3, \Phi_{P_{m+3k}(v)} \right \rangle \right|}{ \left|I_v^{m+3k} \right|^{\frac{1}{2}}} \one_{I_v^{m+3k}}(x)dx.
\end{eqnarray*}
By Cauchy-Schwarz, one has
\begin{equation} \label{20231227eq01}
\Lambda_m^{\ge} \left(\vec{f} \right) \lesssim \int_{\R} \calT(f_1, f_2, f_4)(x) \calS f_3(x)dx, 
\end{equation} 
where $\calS$ here is the standard discrete square function and 
$$\calT^2(f_1, f_2, f_4)(x):=$$
\begin{equation}\label{trilinsq} 
\sum_{\substack{k \ge m \\ r \in \Z}} \sum_{\substack{I_u^{m+2k} \subseteq I_r^{m+k} \\ I_v^{m+3k} \subseteq I_u^{m+2k}}} \left( \frac{1}{2^m} \sum_{p \sim 2^m} \frac{\left| \left \langle f_1, \Phi_{P_{m+k} \left(r-p \right)} \right \rangle \right|}{\left|I_r^{m+k} \right|^{\frac{1}{2}}} \frac{\left| \left \langle f_2, \Phi_{P_{m+2k} \left(u+\frac{p^2}{2^m} \right)} \right \rangle \right|}{\left|I_u^{m+2k} \right|^{\frac{1}{2}}} \frac{\left| \left \langle f_4, \Phi_{P_{m+3k} \left(v-\frac{p^3}{2^{2m}} \right)} \right \rangle \right|}{\left|I_v^{m+3k} \right|^{\frac{1}{2}}} \right)^2 \one_{I_v^{m+3k}}(x)\,,
\end{equation}
is a newly introduced \emph{curved trilinear square function} associated to the form $\Lambda$ defined in \eqref{20240215eq01}.

\begin{prop} \label{20231227prop01}
Let $m \in \N$ and $F_1, F_2, F_4$ be measurable sets with finite (nonzero) Lebesgue measure. Then 
\begin{center}
$\exists~F_4' \subseteq F_4$ measurable with $|F_4'| \ge \frac{1}{2} |F_4|$, 
\end{center}
such that for any triple of functions $f_1, f_2$ and $f_4$ obeying
\begin{equation} \label{fj}
|f_1| \le \one_{F_1}, \quad |f_2| \le \one_{F_2}, \quad |f_4| \le \one_{F_4'}, 
\end{equation}
and any $1<p<\infty$, one has that
\begin{equation} \label{20291229eq21}
\left\|\calT(f_1, f_2, f_4) \right\|_{L^p(\R)} \lesssim_p m^{10} \frac{|F_1|}{|F_4|} \frac{|F_2|}{|F_4|} |F_4|^{\frac{1}{p}}.
\end{equation}
\end{prop}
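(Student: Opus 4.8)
The plan is to control the curved trilinear square function $\calT(f_1,f_2,f_4)$ by a composition of three (shifted) maximal-type operators, one for each of the three factors, and then to exploit the fact that $f_1,f_2$ are indicator functions of fixed sets while $f_4$ lives on a major subset $F_4'$ to be chosen. First I would fix the scale index $k\ge m$ and the frequency parameter $p\sim 2^m$ and observe that for each fixed $p$ the sum over $v$ (with the nesting $I_v^{m+3k}\subseteq I_u^{m+2k}\subseteq I_r^{m+k}$) of the squared product can be recognized, via standard wave-packet orthogonality/Bessel-type bounds, as bounded by a product of three \emph{averaged} quantities: an average of $|\langle f_1,\Phi_{P_{m+k}(r-p)}\rangle|^2/|I_r^{m+k}|$ over the parent interval, an analogous average for $f_2$ at the middle scale with the shift $p^2/2^m$, and an average for $f_4$ at the finest scale with the shift $p^3/2^{2m}$. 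The shifts $p^2/2^m$ and $p^3/2^{2m}$ are integer shifts of size at most $O(2^m)$ at the relevant scale, so they are absorbed by \emph{shifted maximal operators} $M^{(n)}$ with shift parameter $n=O(2^m)$, whose $L^p$ operator norms grow only logarithmically, like $\log(2+|n|)\sim m$. Summing the $(1/2^m)\sum_{p\sim 2^m}$ average and using these shifted maximal bounds, and then summing in $k$ against the finest-scale square function $\calS$ (which is $L^p$-bounded for $1<p<\infty$), one arrives at a pointwise bound of the form
\begin{equation} \label{planbound}
\calT(f_1,f_2,f_4)(x)\lesssim \left(\sup_{n}\frac{M^{(n)}f_1(x)}{\text{(norm)}}\right)^{1/2}\left(\sup_n M^{(n)}(|f_2|^2)(x)\right)^{1/2}\left(\sup_n M^{(n)}(|f_4|^2)(x)\right)^{1/2},
\end{equation}
up to the factor $m^{10}$ coming from the logarithmic losses in the three shifted maximal operators and from the $\sum_k$, $\sum_p$ bookkeeping.

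Next I would feed in the structural information that $|f_1|\le\one_{F_1}$, $|f_2|\le\one_{F_2}$ and $|f_4|\le\one_{F_4'}$. The factors involving $f_2$ and $f_4$ are the two gain-producing ones: since $M^{(n)}(\one_{F_2})$ is the shifted Hardy–Littlewood maximal function of an indicator, on the set where it is comparable to $\lambda$ the relevant level-set has controlled measure, which after Hölder in the three factors produces the ratio $|F_2|/|F_4|$; symmetrically for $f_4$ one would choose the major subset $F_4'$ to discard the exceptional set $\{x: M^{(n)}\one_{F_4}(x)>C|F_4|/|F_1|\}$ for a suitable constant $C$, guaranteeing $|F_4\setminus F_4'|\le\frac12|F_4|$ by the weak $(1,1)$ bound for the (uniform-in-$n$) shifted maximal operator, and this forces the second ratio $|F_1|/|F_4|$. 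The surviving factor, the $f_1$-maximal function restricted off the exceptional set, together with the localization $\one_{I_v^{m+3k}}$ of the square function to the support region, then contributes only the volume factor $|F_4|^{1/p}$ upon taking the $L^p$ norm — this is the step where one integrates $M^{(n)}\one_{F_1}\lesssim |F_1|/|F_4|$ (on the good set) times a function supported where the other maximal functions are large, whose $L^p$ mass is $O(|F_4|^{1/p})$.

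I expect the main obstacle to be the first step: cleanly passing from the doubly-nested tile sum in \eqref{trilinsq} to the product of three independent shifted maximal operators while keeping the shift parameters $p^2/2^m$, $p^3/2^{2m}$ as honest \emph{bounded} shifts at their respective scales and while controlling the interaction between the $p$-average and the $k$-sum. The delicate points are (i) verifying that the wave-packet inner products at a coarser scale can genuinely be replaced by shifted averages of those at finer scales — this uses the nesting $I_v\subseteq I_u\subseteq I_r$ together with an $L^2$ (Bessel) estimate, and one must be careful that the square being summed is over $v$ only, with $r,u$ slaved to $v$; and (ii) ensuring that the total logarithmic loss is genuinely polynomial in $m$ (the claimed $m^{10}$ is generous), coming from: a factor $\sim m$ for each of three shifted maximal operators, a factor from summing the geometric-type series in $k$, and a factor from the $(1/2^m)\sum_{p\sim 2^m}$ average interacting with Cauchy–Schwarz in $p$. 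Once the pointwise bound \eqref{planbound} with its $m$-dependent constant is in place, the measure-theoretic extraction of $F_4'$ and the final Hölder/$L^p$ computation are routine. The restriction $1<p<\infty$ enters only through the $L^p$-boundedness of $\calS$ and of the shifted maximal operators, so no endpoint issues arise here; the $r>1/2$ threshold of Theorem \ref{mainthm01} is imposed elsewhere (in the treatment of $\Lambda_m^{\le}$), not in this proposition.
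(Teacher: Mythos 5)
Your central claim, the pointwise bound \eqref{planbound}, cannot hold and this is where the argument breaks. The quantity $\calT^2(f_1,f_2,f_4)(x)$ is an $\ell^2$-sum over \emph{all} scales $k\ge m$ of (squares of) $p$-averaged products of normalized wave-packet coefficients; for each fixed $x$ and each $k$ there is exactly one contributing tile chain, and each individual term is indeed bounded by a product of shifted maximal functions, but uniformly in $k$ with no decay. Summing over the infinitely many $k$ then diverges, so no bound of $\calT$ by a $k$-independent product of maximal functions is possible. The summability in $k$ has to come from keeping the $f_4$-coefficients inside an $\ell^2$ expression and invoking Bessel's inequality $\sum_{k,v}|\langle f_4,\Phi_{P_{m+3k}(v)}\rangle|^2\lesssim\|f_4\|_{L^2}^2$ --- an integrated, not pointwise, mechanism. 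This is exactly what the paper does: after a level-set stratification of the $p$-average according to the size of the local averages of $f_1$ (your proposal omits this step, which is what makes the Cauchy--Schwarz in $p$ cost only a factor of $m$), the $f_1$- and $f_2$-factors are bounded by $|F_1|/|F_4|$ and $m^3|F_2|/|F_4|$ \emph{on the support of $f_4$}, and the remaining operator in $f_4$ is treated as a genuine square function: $L^2$ via Bessel, weak $(1,1)$ via a Calder\'on--Zygmund decomposition of $f_4$ with kernel estimates on the wave packets, interpolation for $1<p<2$, and a duality/Khintchine argument for $p>2$. None of this is replaceable by H\"older against maximal functions.

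The construction of $F_4'$ is also off. Removing $\{M^{(n)}\one_{F_4}>C|F_4|/|F_1|\}$ does not give a major subset in general (the weak $(1,1)$ bound yields measure $\lesssim |F_1|/C$, which need not be $\le\frac12|F_4|$); the standard construction removes large values of $M\one_{F_1}$ and $M\one_{F_2}$ at thresholds $C|F_j|/|F_4|$. More importantly, a global maximal-function exceptional set is not enough here: to make the $L^2$/Bessel step close, the paper needs the averaged shifted sum of $f_2$-coefficients to be bounded by the \emph{local} ratio $|F_2\cap I|/|F_4\cap I|$ on the support of $f_4$. This forces two additional exceptional sets --- a stopping-time collection of intervals where $|F_2\cap I|/|F_4\cap I|$ is too large, and a set where the \emph{average shifted maximal operator} $\frac{1}{2^{m-l}}\sum_p M^{((p+s)^2/2^m)}$ applied to the localized indicator $\one_{F_2\cap I}$ exceeds that local threshold (this is where the $L^1\to L^{1,\infty}$ bound of norm $O(m^2)$ for the averaged shifted maximal operator enters). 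Your proposal correctly identifies shifted maximal operators with logarithmic losses as the source of the $m$-powers, but without the localized stopping-time sets and without the square-function/Bessel treatment of $f_4$, the estimate \eqref{20291229eq21} does not follow.
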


Assume for the moment that Proposition \ref{20231227prop01} holds. Then, for $f_3 \le \one_{F_3}$ with $F_3$ finitely measurable set in $\R$, by \eqref{20231227eq01}, Cauchy-Schwarz and Proposition \ref{20231227prop01}, we have 
$$
\Lambda_m^{\ge} \left(\vec{f} \right) \lesssim \left\|\calT(f_1, f_2, f_4) \right\|_{L^p(\R)} \left\|\calS f_3 \right\|_{L^{p'}(\R)} \lesssim_p m^{10} \frac{|F_1|}{|F_4} \frac{|F_2|}{|F_4|} |F_4|^{\frac{1}{p}} |F_3|^{1-\frac{1}{p}}, \quad \forall\: p \in (1, \infty), 
$$
which immediately implies Proposition \ref{Mainprop}. 

\medskip 

We are thus now left with the proof of Proposition \ref{20231227prop01}; this will be achieved via several steps: 
\medskip 

\noindent\textsf{\underline{Step I}: Level set decomposition.} 
Observe that the local information of the operator $\calT(f_1, f_2, f_4)$ is concentrated in the intervals of the forms $\left\{I_{\widetilde{r}}^k \right\}_{\widetilde{r} \in \Z}$. Moreover, for each $I_{r-p}^{m+k}$, there is a unique $I_{\widetilde{r}}^k$ such that $I_{r-p}^{m+k} \subseteq I_{\widetilde{r}}^k$ and thus it is natural to split $r=2^m \widetilde{r}+ s$ with $s\sim 2^m$.

With this we rewrite 
$$\calT^2(f_1, f_2, f_4)(x)\approx$$
\begin{equation} 
\sum_{\substack{k \ge m \\ \widetilde{r} \in \Z}} \sum_{\substack{I_u^{m+2k} \subseteq I_{2^m \widetilde{r}+ s}^{m+k}\subseteq I_{\widetilde{r}}^k \\ I_v^{m+3k} \subseteq I_u^{m+2k}}} \left( \frac{1}{2^m} \sum_{p \sim 2^m} \frac{\left| \left \langle f_1, \Phi_{P_{m+k} \left(2^m \widetilde{r}-p \right)} \right \rangle \right|}{\left|I_r^{m+k} \right|^{\frac{1}{2}}} \frac{\left| \left \langle f_2, \Phi_{P_{m+2k} \left(u+\frac{(p+s)^2}{2^m} \right)} \right \rangle \right|}{\left|I_u^{m+2k} \right|^{\frac{1}{2}}} \frac{\left| \left \langle f_4, \Phi_{P_{m+3k} \left(v-\frac{(p+s)^3}{2^{2m}} \right)} \right \rangle \right|}{\left|I_v^{m+3k} \right|^{\frac{1}{2}}} \right)^2 \one_{I_v^{m+3k}}(x). \nonumber
\end{equation}

Notice now that one obviously has $\frac{\int_{I_{2^m \widetilde{r}-p}^{m+k}} \left|f_1\right|}{\left|I_{0}^{m+k} \right|} \lesssim 2^m \frac{\int_{3I_{\widetilde{r}}^k} |f_1|}{\left|I_{\widetilde{r}}^k \right|}$.
Therefore, for each $\widetilde{r} \in \Z$, we let
$$
S_{\widetilde{r}}^l(f_1):=\left\{p \sim 2^m: \frac{\int_{I_{2^m \widetilde{r}-p}^{m+k}} \left|f_1\right|}{\left|I_{0}^{m+k} \right|} \simeq 2^{l} \frac{\int_{3 I_{\widetilde{r}}^k} |f_1|}{\left|I_{\widetilde{r}}^k \right|} \right\}, \quad l \in \{1, \dots, m\}, 
$$
and
$$
S_{\widetilde{r}}^0(f_1):=\left\{p \sim 2^m: \frac{\int_{I_{2^m \widetilde{r}-p}^{m+k}} \left|f_1\right|}{\left|I_{0}^{m+k} \right|} \lesssim  \frac{\int_{3 I_{\widetilde{r}}^k} |f_1|}{\left|I_{\widetilde{r}}^k \right|} \right\}
$$
and remark that $\# S_{\widetilde{r}}^l(f_1) \lesssim 2^{m-l}$. With these, via Cauchy-Schwarz, we have
$$\frac{1}{m}\,\calT^2(f_1, f_2, f_4)(x)\lesssim $$
\begin{equation*}
\sum_{\substack{k \ge m \\ \widetilde{r} \in \Z}} \sum_{l=0}^m  \left(\frac{\int_{3I_{\widetilde{r}}^k} |f_1|}{\left|I_{\widetilde{r}}^k \right|} \right)^2 \sum_{\substack{I_{2^m \widetilde{r}+ s}^{m+k} \subseteq I_{\widetilde{r}}^k \\ I_u^{m+2k} \subseteq I_{2^m \widetilde{r}+ s}^{m+k} \\ I_v^{m+3k} \subseteq I_u^{m+2k}}} \left( \frac{1}{2^{m-l}} \sum_{p \in S_{\widetilde{r}}^l(f_1)} \frac{\left| \left \langle f_2, \Phi_{P_{m+2k} \left(u+\frac{(p+s)^2}{2^m} \right)} \right \rangle \right|}{\left|I_u^{m+2k} \right|^{\frac{1}{2}}} \frac{\left| \left \langle f_4, \Phi_{P_{m+3k} \left(v-\frac{(p+s)^3}{2^{2m}} \right)} \right \rangle \right|}{\left|I_v^{m+3k} \right|^{\frac{1}{2}}} \right)^2 \one_{I_v^{m+3k}}(x). 
\end{equation*}

\medskip 

\noindent\textsf{\underline{Step II}: Definition of the exceptional set.} Let
$$
\Omega:=\Omega_1 \cup \Omega_2 \cup \Omega_3. 
$$
Here, the sets $\Omega_i$'s are defined as follows.

\medskip 

\noindent \textit{Definition of $\Omega_1$.} Let 
$$
\Omega_1:= \left\{x \in \R: M\one_{F_1} \ge 100 \frac{|F_1|}{|F_4|} \right\} \cup  \left\{x \in \R: M\one_{F_2} \ge 100 \frac{|F_2|}{|F_4|} \right\}.
$$
It is clear that $|\Omega_1| \le \frac{|F_4|}{50}$. 

\medskip 

\noindent \textit{Definition of $\Omega_2$.} For each fixed $k \ge m$, we let
$$
\widetilde{\calB^k}:=\bigcup\left\{I_{2^m \widetilde{r}+ s}^{m+k} : \frac{\left| F_2 \cap I_{2^m \widetilde{r}+ s}^{m+k} \right|}{\left|F_4 \cap I_{2^m \widetilde{r}+ s}^{m+k}  \right|} \ge \frac{100 |F_2|}{|F_4|} \right\},
$$
$$
\widetilde{\calB^{\textrm{max}}}:=\left\{I: I \in \bigcup_{k \ge m} \widetilde{\calB^k}, \ I \ \textrm{maximal $\&$ dyadic} \right\}. 
$$
With these, let $\Omega_2:=\bigcup\limits_{I \in \widetilde{\calB^{\textrm{max}}}} I \cap F_4$. We claim that $\left|\Omega_2 \right| \le \frac{|F_4|}{50}$. Indeed, since $\widetilde{\calB^{\textrm{max}}}$ is a collection of maximal dyadic intervals, we have for distinct $I, I' \in \widetilde{\calB^{\textrm{max}}}$, $I \cap I'=\emptyset$. Thus, 
$$ 
\left|\Omega_2 \right|= \left| \bigcup_{I \in \widetilde{\calB^{\textrm{max}}}} I \cap F_4 \right| \le \sum_{I \in \widetilde{\calB^{\textrm{max}}}} \left|I \cap F_4 \right| \le \sum_{I \in \widetilde{\calB^{\textrm{max}}}} \frac{\left|F_2 \cap I \right|}{50 |F_2|} \left|F_4 \right| \le \frac{|F_4|}{50},
$$
which gives the desired claim.

\medskip 

\noindent\textit{Definition of $\Omega_3$.} For each $\widetilde{r} \in \Z$ and $l \in \{0, 1, \dots, m\}$, recalling that $k\geq m$, we define 
$$
\calA_{\widetilde{r}}^l(f_2):=\bigcup_{s\sim 2^m}\left\{u: I_u^{m+2k} \subseteq I_{2^m \widetilde{r}+ s}^{m+k}: \frac{1}{2^{m-l}} \sum_{p \in \calS_{\widetilde{r}}^l(f_1)} \frac{\left| \left\langle f_2, \Phi_{P_{m+2k}\left(u+\frac{(p+s)^2}{2^m} \right)} \right\rangle \right|}{\left|I_u^{m+2k} \right|^{\frac{1}{2}}} \le \frac{1000\,m^3 \left|F_2 \cap I_{2^m \widetilde{r}+ s}^{m+k}\right|}{\left|F_4 \cap I_{2^m \widetilde{r}+ s}^{m+k} \right|} \right\},
$$
$$
\left(\calA_{\widetilde{r}}^l(f_2) \right)^c:=\bigcup_{s\sim 2^m}\left\{u: I_u^{m+2k} \subseteq I_{2^m \widetilde{r}+ s}^{m+k}: \frac{1}{2^{m-l}} \sum_{p \in \calS_{\widetilde{r}}^l(f_1)} \frac{\left| \left\langle f_2, \Phi_{P_{m+2k}\left(u+\frac{(p+s)^2}{2^m} \right)} \right\rangle \right|}{\left|I_u^{m+2k} \right|^{\frac{1}{2}}} > \frac{1000\,m^3 \left|F_2 \cap I_{2^m \widetilde{r}+ s}^{m+k} \right|}{\left|F_4 \cap I_{2^m \widetilde{r}+ s}^{m+k}\right|} \right\},
$$
and
$$
\Omega_3:=\bigcup_{l \in \{0, 1, \dots m\}} \bigcup_{\widetilde{r} \in \Z} \bigcup_{u \in \left(\calA_{\widetilde{r}}^l(f_2) \right)^c} I_u^{m+2k}. 
$$
We claim that $\left| \Omega_3 \right| \le \frac{|F_4|}{50}$. To prove this claim, we first recall the definition of the $p-$shifted maximal function
$M^{(p)}f(x):=\sup\limits_{I_u^k\ni x}\frac{\int_{I_{u+p}^k} |f|}{|I_{u}^k|}$ and note that for each fixed $\widetilde{r} \in \Z$, $s\sim 2^m$ and $l \in \{0, 1, \dots, m\}$, the \emph{average shifted maximal operator} 
\begin{equation} \label{20231229eq20}
\frac{1}{2^{m-l}} \sum_{p \in \calS_{\widetilde{r}}^l(f_1)} M^{\left(\frac{(p+s)^2}{2^m} \right)}: L^1 \longmapsto L^{1, \infty}
\end{equation} 
has the operator norm bounded from above by a constant independent of  $\widetilde{r},\, s,\,l$ and of magnitude at most $m^2$ (this is a consequence of the Stein-Weiss inequality for weak $L^1$ norm and \cite[Lemma 4.3]{GL20}). Let
$$
\frakB_{\widetilde{r}}^l(f_2):=\bigcup_{s\sim 2^m} \left\{ x \in I_{2^m \widetilde{r}+ s}^{m+k}: \frac{1}{2^{m-l}} \sum_{p \in \calS_{\widetilde{r}}^l(f_1)} M^{\left(\frac{(p+s)^2}{2^m} \right)} \one_{F_2 \cap I_{2^m \widetilde{r}+ s}^{m+k}}(x) \ge \frac{1000m^3 \left|F_2 \cap I_{2^m \widetilde{r}+ s}^{m+k}\right|}{\left|F_4 \cap I_{2^m \widetilde{r}+ s}^{m+k} \right|} \right\}. 
$$
Note that $\bigcup\limits_{u \in \left(\calA_{\widetilde{r}}^l(f_2) \right)^c} I_u^{m+2k} \subseteq \frakB_{\widetilde{r}}^l(f_2)$. Using \eqref{20231229eq20}, we have 
$$
\left | \Omega_3 \right | \le \sum_{l=0}^m \sum_{\widetilde{r} \in \Z} \left|\bigcup\limits_{u \in \left(\calA_{\widetilde{r}}^l(f_2)\right)^c} I_u^{m+2k} \right| \le   \sum_{l=0}^m \sum_{\widetilde{r} \in \Z} \left| \frakB_{\widetilde{r}}^l(f_2) \right| \le \sum_{l=0}^m \sum_{\widetilde{r} \in \Z} \sum_{s\sim 2^m} \frac{\left|F_4 \cap I_{2^m \widetilde{r}+ s}^{m+k}\right|}{100 m} \le \frac{|F_4|}{50}, 
$$
which gives the desired claim.

\medskip

As a consequence of the above construction, one has $\left|\Omega \right| \le \left|\Omega_1 \right|+\left|\Omega_2 \right|+\left|\Omega_3 \right| \le \frac{|F_4|}{10}$. With these settled, we set $F_4':=F_4 \backslash \Omega$. 

\medskip 

\noindent\textsf{\underline{Step III}: Decomposition of $ \Omega^c $.} We group the intervals $\left\{I_{\widetilde{r}}^k \right\}_{\widetilde{r} \in \Z}$ based on the relative distance to $ \Omega^c $; more precisely, for each $\beta \in \N$ and $k \ge m$, we define
$$
\calI_{\beta, k}:=\left\{I_{\widetilde{r}}^k: 1+\frac{\textrm{dist} \left(I_{\widetilde{r}}^k, \Omega^c \right)}{\left|I_{\widetilde{r}}^k \right|} \simeq 2^{\beta} \right\}.
$$
This allows us to write $\calT(f_1, f_2, f_4)$ as
$$
\calT^2(f_1, f_2, f_4)(x)=m\,\sum_{l=0}^m \sum_{\beta \ge 0} \calT^2_{\beta, l}(f_1, f_2, f_4)(x), 
$$
where 
$$\calT^2_{\beta, l}(f_1, f_2, f_4)(x):=$$ 
\begin{equation*}
\sum_{\substack{k \ge m \\ \widetilde{r} \in \Z\\ I_{\widetilde{r}}^k \in \calI_{\beta, k}}} \left(\frac{\int_{3I_{\widetilde{r}}^k} |f_1|}{\left|I_{\widetilde{r}}^k \right|} \right)^2 \sum_{\substack{I_{2^m \widetilde{r}+ s}^{m+k} \subseteq I_{\widetilde{r}}^k \\ I_u^{m+2k} \subseteq I_{2^m \widetilde{r}+ s}^{m+k} \\ I_v^{m+3k} \subseteq I_u^{m+2k}}} \left( \frac{1}{2^{m-l}} \sum_{p \in S_{\widetilde{r}}^l(f_1)} \frac{\left| \left \langle f_2, \Phi_{P_{m+2k} \left(u+\frac{(p+s)^2}{2^m} \right)} \right \rangle \right|}{\left|I_u^{m+2k} \right|^{\frac{1}{2}}} \frac{\left| \left \langle f_4, \Phi_{P_{m+3k} \left(v-\frac{(p+s)^3}{2^{2m}} \right)} \right \rangle \right|}{\left|I_v^{m+3k} \right|^{\frac{1}{2}}} \right)^2 \one_{I_v^{m+3k}}(x). 
\end{equation*}

Since the main estimate \eqref{20291229eq21} allows a polynomial loss in $m$, we may focus on a single $l$. Let 
$$
\frakT(f_1, f_2, f_4):=\frac{|F_4|}{|F_1|}\frac{|F_4|}{|F_2|} \calT(f_1, f_2, f_4) \quad \textrm{and} \quad \frakT_{\beta, l}(f_1, f_2, f_4):=\frac{|F_4|}{|F_1|}\frac{|F_4|}{|F_2|} \calT_{\beta, l}(f_1, f_2, f_4). 
$$
With the previous notations, fixing a pair $(f_1,f_2)$ and assuming \eqref{fj}, it is now sufficient to prove that for any $0\leq l\leq m$, $\beta\in\N$ and $1<p<\infty$ one has
\begin{equation} \label{20231231eq05}
\|\frakT_{\beta, l}[f_1, f_2](f_4)\|_{L^p}\lesssim_{p} m^6\, 2^{-100\beta}\,\|f_4\|_{L^p}\,.
\end{equation}

\noindent\textsf{\underline{Step IV}: Treatment of $\frakT_{0, l}$ - $L^2$ estimates.} By the assumption, we have $I_{\widetilde{r}}^k \in \calI_{0, k}$, which implies
\begin{equation} \label{20231231eq05}
\frac{\int_{3 I_{\widetilde{r}}^k} |f_1|}{\left|I_{\widetilde{r}}^k \right|} \lesssim \frac{|F_1|}{|F_4|}.
\end{equation} 
Therefore, using Cauchy-Schwarz, we have
$$\left\| \frakT_{0, l}(f_1, f_2, f_4) \right\|_{L^2(\R)}^2$$
$$\lesssim \frac{|F_4|^2}{|F_2|^2} \int_{\R} \sum_{\substack{k \ge m \\ \widetilde{r} \in \Z \\ I_{\widetilde{r}}^k \in \calI_{0, k}}} \sum_{\substack{I_u^{m+2k}\subseteq I_{2^m \widetilde{r}+ s}^{m+k} \subseteq I_{\widetilde{r}}^k \\ u \in \calA_{\widetilde{r}}^{l}(f_2) \\ I_v^{m+3k} \subseteq I_u^{m+2k}}} \left( \frac{1}{2^{m-l}} \sum_{p \in S_{\widetilde{r}}^l(f_1)} \frac{\left| \left \langle f_2, \Phi_{P_{m+2k} \left(u+\frac{(p+s)^2}{2^m} \right)} \right \rangle \right|}{\left|I_u^{m+2k} \right|^{\frac{1}{2}}} \frac{\left| \left \langle f_4, \Phi_{P_{m+3k} \left(v-\frac{(p+s)^3}{2^{2m}} \right)} \right \rangle \right|}{\left|I_v^{m+3k} \right|^{\frac{1}{2}}} \right)^2 \one_{I_v^{m+3k}}(x) dx$$
\begin{eqnarray*}
&&\lesssim \frac{|F_4|^2}{|F_2|^2} \int_{\R}  \sum_{\substack{k \ge m \\ \widetilde{r} \in \Z \\ I_{\widetilde{r}}^k \in \calI_{0, k}}} \sum_{\substack{I_u^{m+2k}\subseteq I_{2^m \widetilde{r}+ s}^{m+k} \subseteq I_{\widetilde{r}}^k \\ u \in \calA_{\widetilde{r}}^{l}(f_2) \\ I_v^{m+3k} \subseteq I_u^{m+2k}}} \left(\frac{1}{2^{m-l}} \sum_{p \in S_{\widetilde{r}}^l(f_1)} \frac{\left | \left \langle f_2, \Phi_{P_{m+2k} \left(u+\frac{(p+s)^2}{2^m} \right)} \right \rangle \right|}{\left|I_u^{m+2k} \right|^{\frac{1}{2}}} \right) \\
&& \quad \quad \quad \quad \quad \quad \quad \quad \quad \cdot \left( \frac{1}{2^{m-l}} \sum_{p \in S_{\widetilde{r}}^l(f_1)} \frac{\left| \left \langle f_2, \Phi_{P_{m+2k} \left(u+\frac{(p+s)^2}{2^m} \right)} \right \rangle \right|}{\left|I_u^{m+2k} \right|^{\frac{1}{2}}} \frac{\left| \left \langle f_4, \Phi_{P_{m+3k} \left(v-\frac{(p+s)^3}{2^{2m}} \right)} \right \rangle \right|^2}{\left|I_v^{m+3k} \right|} \right) \one_{I_v^{m+3k}}(x)dx. 
\end{eqnarray*}
By the definitions of $\Omega_3$ and $\Omega_2$, we see the expression 
$$
\frac{1}{2^{m-l}} \sum_{p \in S_{\widetilde{r}}^l(f_1)} \frac{\left | \left \langle f_2, \Phi_{P_{m+2k} \left(u+\frac{(p+s)^2}{2^m} \right)} \right \rangle \right|}{\left|I_u^{m+2k} \right|^{\frac{1}{2}}} \quad \textrm{is bounded above by} \quad \frac{10^3m^3 \left|F_2 \cap I_{2^m \widetilde{r}+ s}^{m+k} \right|}{\left|F_4 \cap I_{2^m \widetilde{r}+ s}^{m+k} \right|} \le \frac{10^5 m^3 |F_2|}{|F_4|}. 
$$
Therefore, using the fact that $k \ge m$, we have
$$\left\|\frakT_{0, l}(f_1, f_2, f_4) \right\|_{L^2(\R)}^2$$
$$\lesssim \frac{m^3|F_4|}{|F_2|} \int_{\R}  \sum_{\substack{k \ge m \\ \widetilde{r} \in \Z \\ I_{\widetilde{r}}^k \in \calI_{0, k}}} \sum_{\substack{I_u^{m+2k}\subseteq I_{2^m \widetilde{r}+ s}^{m+k} \subseteq I_{\widetilde{r}}^k \\ u \in \calA_{\widetilde{r}}^{l}(f_2) \\ I_v^{m+3k} \subseteq I_u^{m+2k}}}  \frac{1}{2^{m-l}} \sum_{p \in S_{\widetilde{r}}^l(f_1)} \frac{\left| \left \langle f_2, \Phi_{P_{m+2k} \left(u+\frac{(p+s)^2}{2^m} \right)} \right \rangle \right|}{\left|I_u^{m+2k} \right|^{\frac{1}{2}}} \frac{\left| \left \langle f_4, \Phi_{P_{m+3k} \left(v-\frac{(p+s)^3}{2^{2m}} \right)} \right \rangle \right|^2}{\left|I_v^{m+3k} \right|}  \one_{I_v^{m+3k}}(x)dx$$
$$= \frac{m^3|F_4|}{|F_2|}  \sum_{\substack{k \ge m \\ \widetilde{r} \in \Z \\ I_{\widetilde{r}}^k \in \calI_{0, k}}} \sum_{\substack{I_u^{m+2k}\subseteq I_{2^m \widetilde{r}+ s}^{m+k} \subseteq I_{\widetilde{r}}^k \\ u \in \calA_{\widetilde{r}}^{l}(f_2) \\ I_v^{m+3k} \subseteq I_u^{m+2k}}}  \frac{1}{2^{m-l}} \sum_{p \in S_{\widetilde{r}}^l(f_1)} \frac{\left| \left \langle f_2, \Phi_{P_{m+2k} \left(u+\frac{(p+s)^2}{2^m} \right)} \right \rangle \right|}{\left|I_u^{m+2k} \right|^{\frac{1}{2}}} \left| \left \langle f_4, \Phi_{P_{m+3k} \left(v-\frac{(p+s)^3}{2^{2m}} \right)} \right \rangle \right|^2 $$
$$\lesssim \frac{m^3|F_4|}{|F_2|}  \sum_{\substack{k \ge m \\ \widetilde{r} \in \Z \\ I_{\widetilde{r}}^k \in \calI_{0, k}}} \sum_{\substack{I_u^{m+2k}\subseteq I_{2^m \widetilde{r}+ s}^{m+k} \subseteq I_{\widetilde{r}}^k \\ u \in \calA_{\widetilde{r}}^{l}(f_2)}}  \frac{1}{2^{m-l}} \sum_{p \in S_{\widetilde{r}}^l(f_1)} \frac{\left| \left \langle f_2, \Phi_{P_{m+2k} \left(u+\frac{(p+s)^2}{2^m} \right)} \right \rangle \right|}{\left|I_u^{m+2k} \right|^{\frac{1}{2}}} \left( \sum_{I_v^{m+3k} \subseteq 2I_u^{m+2k}}\left| \left \langle f_4, \Phi_{P_{m+3k} \left(v \right)} \right \rangle \right|^2\right)$$
$$\lesssim m^6 \sum_{k \ge m, v \in \Z} \left | \left \langle f_4, \Phi_{P_{m+3k}(v)} \right \rangle \right|^2 \lesssim m^6 \left\|f_4 \right\|_{L^2(\R)}^2. $$
This concludes the $L^2$ estimate of the operator $\frakT_{0, l}(f_1, f_2, f_4)$.

\medskip 

\noindent\textsf{\underline{Step V}: Treatment of $\frakT_{0, l}$ - $L^1$ estimates.} Our goal is to prove that for any $\lambda>0$, one has
\begin{equation} \label{20231231eq01}
\left| \left\{x \in \R: \left |\frakT_{0, l}(f_1, f_2, f_4)(x) \right|>\lambda \right\} \right| \lesssim \frac{m^6}{\lambda} \left\|f_4 \right\|_{L^1(\R)}. 
\end{equation} 
We prove the above estimate via standard Calder\'on-Zygmund analysis. Let $E_\lambda:=\left\{x \in \R: M_d f_4(x)>\lambda \right\}$, where $M_d$ is the standard dyadic maximal operator. This gives a unique collection $\calI$ of maximal (disjoint) dyadic intervals such that $E_\lambda=\bigcup\limits_{J \in \calI} J$. The following facts are standard: 
\begin{enumerate}
    \item [(1).] $\left| E_\lambda \right| \lesssim \frac{1}{\lambda} \left\|f_4 \right\|_{L^1(\R)}$;
    \item [(2).] $\left| f_4(x) \right| \le \lambda, \forall x \in \R \backslash E_\lambda$;
    \item [(3).] $f_4=g+b$, where $g:=f \one_{\left(E_\lambda\right)^c}+\sum\limits_{J \in \calI} \left(\frac{1}{|J|}\int_J f_4 \right) \one_J$, $b:=\sum\limits_{J \in \calI} b_J$ and $b_J:=\left(f_4-\frac{1}{|J|}\int_J f_4 \right) \one_J$; 
    \item [(4).] $\left\|g \right\|_{L^\infty(\R)} \lesssim \lambda$, $\left\|g \right\|_{L^1(\R)} \lesssim \left\|f_4 \right\|_{L^1(\R)}$;
    \item [(5).] For every $J \in \calI$, $\supp \ b_J \subseteq J$, $\int b_J=0$ and $\left\|b_J \right\|_{L^1(\R)} \lesssim \lambda |J|$. 
\end{enumerate}
Therefore, 
\begin{equation} \label{20231231eq02}
\left| \left\{\left|\frakT_{0, l}(f_1, f_2, f_4)(x) \right|> \lambda \right \} \right| \le \left| \left\{ \left| \frakT_{0, l}(f_1, f_2, g)(x) \right|> \frac{\lambda}{2} \right\} \right|+\left| \left\{ \left| \frakT_{0, l}(f_1, f_2, b)(x) \right|> \frac{\lambda}{2} \right\} \right|.
\end{equation}
The first term in the right-hand side of \eqref{20231231eq02} can be estimated via the $L^2$ estimates derived at the previous step:
$$
\left| \left\{ \left| \frakT_{0, l}(f_1, f_2, g)(x) \right|> \frac{\lambda}{2} \right\} \right| \lesssim \frac{1}{\lambda^2} \left\|\frakT_{0, l}(f_1, f_2, g) \right\|_{L^2(\R)}^2 \lesssim \frac{m^6}{\lambda^2} \left\|g \right\|_{L^2(\R)}^2 \lesssim \frac{m^6}{\lambda} \left\|f_4 \right\|_{L^1(\R)}.
$$
Now for the second term in \eqref{20231231eq02}, we further write it as
\begin{eqnarray*} 
\left| \left\{ \left| \frakT_{0, l}(f_1, f_2, b)(x) \right|> \frac{\lambda}{2} \right\} \right|%
&=&\left| \left\{x \in \bigcup_{J \in \calI} 100J:  \left| \frakT_{0, l}(f_1, f_2, b)(x) \right|> \frac{\lambda}{2} \right\} \right| \\
&& \quad \quad +\left| \left\{x \in \left(\bigcup_{J \in \calI} 100J \right)^c:  \left| \frakT_{0, l}(f_1, f_2, b)(x) \right|> \frac{\lambda}{2} \right\} \right| \\
&:=& {\bf A}+{\bf B}. 
\end{eqnarray*} 
The term ${\bf A}$ is easy to estimate:
$$
{\bf A} \lesssim \sum_{J \in \calI} |J| \lesssim \frac{\left\|f_4 \right\|_{L^1(\R)}}{\lambda}. 
$$
While for the term ${\bf B}$, we have 
$$
{\bf B} \lesssim \frac{1}{\lambda}\int_{\left(\bigcup_{J \in \calI} 100J \right)^c} \left| \frakT_{0, l}(f_1, f_2, b) \right| \lesssim \frac{1}{\lambda} \sum_{J \in \calI} \int_{ \left(100J \right)^c} \left|\frakT_{0, l}(f_1, f_2, b_J) \right|. 
$$
Note that it suffices to show that for each $J \in \calI$, 
\begin{equation} \label{20231231eq14}
\int_{ \left(100J \right)^c} \left|\frakT_{0, l}(f_1, f_2, b_J) \right| \lesssim m^4 \lambda |J|. 
\end{equation} 
To see this, first note that by \eqref{20231231eq05}, one has
$$\int_{ \left(100J \right)^c} \left|\frakT_{0, l}(f_1, f_2, b_J) \right| \lesssim \frac{|F_4|}{|F_2|} \int_{\left(100J \right)^c}$$
$$\left( \sum_{\substack{k \ge m \\ \widetilde{r} \in \Z \\ I_{\widetilde{r}}^k \in \calI_{0, k}}} \sum_{\substack{I_u^{m+2k} \subseteq I_{2^m \widetilde{r}+ s}^{m+k} \subseteq I_{\widetilde{r}}^k \\ u \in \calA_{\widetilde{r}}^l(f_2) \\ I_v^{m+3k} \subseteq I_u^{m+2k}}} \left( \frac{1}{2^{m-l}} \sum_{p \in S_{\widetilde{r}}^l(f_1)} \frac{\left| \left\langle f_2, \Phi_{P_{m+2k} \left(u+\frac{(p+s)^2}{2^m} \right)} \right \rangle \right|}{\left|I_u^{m+2k} \right|^{\frac{1}{2}}} \frac{\left| \left\langle b_J, \Phi_{P_{m+3k} \left(v-\frac{(p+s)^3}{2^{2m}} \right)} \right \rangle \right|}{\left|I_v^{m+3k} \right|^{\frac{1}{2}}} \right)^2 \one_{I_v^{m+3k}}(x) \right)^{\frac{1}{2}} dx $$
$$\lesssim \frac{|F_4|}{|F_2|} \int_{\left(100J \right)^c}\sum_{\substack{k \ge m \\ \widetilde{r} \in \Z \\ I_{\widetilde{r}}^k \in \calI_{0, k}}} \sum_{\substack{I_u^{m+2k} \subseteq I_{2^m \widetilde{r}+ s}^{m+k} \subseteq I_{\widetilde{r}}^k \\ u \in \calA_{\widetilde{r}}^l(f_2) \\ I_v^{m+3k} \subseteq I_u^{m+2k}}} \frac{1}{2^{m-l}} \sum_{p \in S_{\widetilde{r}}^l(f_1)} \frac{\left| \left\langle f_2, \Phi_{P_{m+2k} \left(u+\frac{(p+s)^2}{2^m} \right)} \right \rangle \right|}{\left|I_u^{m+2k} \right|^{\frac{1}{2}}} \frac{\left| \left\langle b_J, \Phi_{P_{m+3k} \left(v-\frac{(p+s)^3}{2^{2m}} \right)} \right \rangle \right|}{\left|I_v^{m+3k} \right|^{\frac{1}{2}}} \one_{I_v^{m+3k}}(x)dx. $$
A simple calculation yields
\begin{eqnarray} \label{20231231eq11}
&& \frac{\left| \left\langle b_J, \Phi_{P_{m+3k} \left(v-\frac{(p+s)^3}{2^{2m}} \right)} \right \rangle \right|}{\left|I_v^{m+3k} \right|^{\frac{1}{2}}}  \\
&& \quad \lesssim \int_{\R} \min \left\{\frac{2^{m+3k}}{\left|2^{m+3k}(y+c(J))-v+\frac{(p+s)^3}{2^{2m}} \right|^2+1}, \max_{|u| \le 2^{m+3k}|J|} \frac{2^{2(m+3k)}|J|}{\left|2^{m+3k}c(J)-v+\frac{(p+s)^3}{2^{2m}}+u \right|^2+1} \right\} \left|b_{\widetilde{J}}(y) \right| dy \nonumber, 
\end{eqnarray} 
where here we denote $c(J)$ the center of $J$, $\widetilde{J}:=J-c(J)$ and $b_{\widetilde{J}}(y):=b_J(y+c(J))$. 

Guided by \eqref{20231231eq11}, we further decompose the estimate of the expression $ \int_{ \left(100J \right)^c} \left|\frakT_{0, l}(f_1, f_2, b_J) \right|$ into two parts:
\begin{equation} \label{20240102eq01}
 \int_{ \left(100J \right)^c} \left|\frakT_{0, l}(f_1, f_2, b_J) \right| \le {\bf B}_1+{\bf B}_2, 
\end{equation} 
where 
\begin{eqnarray*} 
{\bf B}_1:=\frac{|F_4|}{|F_2|} \int_{\left(100J \right)^c}\sum_{\substack{k \ge m \\ 2^{m+3k}|J| \le 1\\ \widetilde{r} \in \Z \\ I_{\widetilde{r}}^k \in \calI_{0, k}}} \sum_{\substack{I_u^{m+2k} \subseteq I_{2^m \widetilde{r}+ s}^{m+k}\subseteq I_{\widetilde{r}}^k \\ u \in \calA_{\widetilde{r}}^l(f_2) \\ I_v^{m+3k} \subseteq I_u^{m+2k}}} &&\frac{1}{2^{m-l}} \sum_{p \in S_{\widetilde{r}}^l(f_1)} \frac{\left| \left\langle f_2, \Phi_{P_{m+2k} \left(u+\frac{(p+s)^2}{2^m} \right)} \right \rangle \right|}{\left|I_u^{m+2k} \right|^{\frac{1}{2}}} \\
&&\cdot\frac{\left| \left\langle b_J, \Phi_{P_{m+3k} \left(v-\frac{(p+s)^3}{2^{2m}} \right)} \right \rangle \right|}{\left|I_v^{m+3k} \right|^{\frac{1}{2}}} \one_{I_v^{m+3k}}(x)dx,
\end{eqnarray*} 
and
\begin{eqnarray*} 
{\bf B}_2:=\frac{|F_4|}{|F_2|} \int_{\left(100J \right)^c}\sum_{\substack{k \ge m \\ 2^{m+3k}|J| \ge 1\\ \widetilde{r} \in \Z \\ I_{\widetilde{r}}^k \in \calI_{0, k}}} \sum_{\substack{I_u^{m+2k} \subseteq I_{2^m \widetilde{r}+ s}^{m+k}\subseteq I_{\widetilde{r}}^k \\ u \in \calA_{\widetilde{r}}^l(f_2) \\ I_v^{m+3k} \subseteq I_u^{m+2k}}} && \frac{1}{2^{m-l}} \sum_{p \in S_{\widetilde{r}}^l(f_1)} \frac{\left| \left\langle f_2, \Phi_{P_{m+2k} \left(u+\frac{(p+s)^2}{2^m} \right)} \right \rangle \right|}{\left|I_u^{m+2k} \right|^{\frac{1}{2}}}\\
&& \cdot\frac{\left| \left\langle b_J, \Phi_{P_{m+3k} \left(v-\frac{(p+s)^3}{2^{2m}} \right)} \right \rangle \right|}{\left|I_v^{m+3k} \right|^{\frac{1}{2}}} \one_{I_v^{m+3k}}(x)dx.
\end{eqnarray*} 

\medskip 

\noindent\underline{\textit{Estimate of ${\bf B}_1$.}} By \eqref{20231231eq11}, 
\begin{eqnarray*}
&& {\bf B}_1 \lesssim \frac{|F_4|}{|F_2|} \int_{(100J)^c} \sum_{\substack{k \ge m \\ 2^{m+3k}|J| \le 1}} \sum_{\substack{\widetilde{r} \in \Z \\ I_{\widetilde{r}^k} \in \calI_{0, k}}} \sum_{\substack{I_u^{m+2k}\subseteq I_{2^m \widetilde{r}+ s}^{m+k} \subseteq I_{\widetilde{r}}^k \\ u \in \calA_{\widetilde{r}}^{l}(f_2) \\ I_v^{m+3k} \subseteq I_u^{m+2k}}} \frac{1}{2^{m-l}}  \sum_{p \in S_{\widetilde{r}}^l(f_1)}  \frac{\left| \left\langle f_2, \Phi_{P_{m+2k} \left(u+\frac{(p+s)^2}{2^m} \right)} \right \rangle \right|}{\left|I_u^{m+2k} \right|^{\frac{1}{2}}}  \\
&& \quad \quad \quad  \quad \quad \quad \cdot \left(\int_{\R} \max_{|u| \le 2^{m+3k}|J|} \frac{2^{2(m+3k)}|J|}{\left|2^{m+3k}c(J)-v+\frac{(p+s)^3}{2^{2m}}+u \right|^2+1} \left|b_{\widetilde{J}}(y) \right| dy \right) \one_{I_v^{m+3k}}(x)dx \\
&& \quad \lesssim \frac{|F_4|}{|F_2|} \int_{(100J)^c} \sum_{\substack{k \ge m \\ 2^{m+3k}|J| \le 1}} \sum_{\substack{\widetilde{r} \in \Z \\ I_{\widetilde{r}}^k \in \calI_{0, k}}} \sum_{\substack{I_u^{m+2k} \subseteq I_{2^m \widetilde{r}+ s}^{m+k}\subseteq I_{\widetilde{r}}^k \\ u \in \calA_{\widetilde{r}}^{l}(f_2)}} \left(\frac{1}{2^{m-l}}  \sum_{p \in S_{\widetilde{r}}^l(f_1)}  \frac{\left| \left\langle f_2, \Phi_{P_{m+2k} \left(u+\frac{(p+s)^2}{2^m} \right)} \right \rangle \right|}{\left|I_u^{m+2k} \right|^{\frac{1}{2}}} \right) \\
&& \quad \quad \quad  \quad \quad \quad \cdot \sup_{p \in S_{\widetilde{r}}^l(f_1)} \left(\sum_{I_v^{m+3k} \subseteq I_u^{m+2k}} \left( \int_{\R}\frac{2^{2(m+3k)}|J|}{\left|2^{m+3k}c(J)-v+\frac{(p+s)^3}{2^{2m}}\right|^2+1} \left|b_{\widetilde{J}}(y) \right| dy \right) \one_{I_v^{m+3k}}(x)dx\right)\\
&& \quad \lesssim m^3 \lambda |J| \int_{(100J)^c} \sum_{\substack{k \ge m \\ 2^{m+3k}|J| \le 1}} \sum_{\substack{\widetilde{r} \in \Z \\ I_{\widetilde{r}}^k \in \calI_{0, k}\\I_u^{m+2k} \subseteq I_{\widetilde{r}}^k}} \left(\sup_{p\sim 2^m} \left(\sum_{I_v^{m+3k} \subseteq I_u^{m+2k}}  \frac{2^{2(m+3k)}|J|}{\left|2^{m+3k}c(J)-v+\frac{p^3}{2^{2m}} \right|^2+1} \right) \one_{I_v^{m+3k}}(x)\right) dx,
\end{eqnarray*}
where in the last estimate, we have used the facts that $\left\|b_J \right\|_{L^1(\R)} \lesssim \lambda|J|$, $ 2^{m+3k}|J| \le 1$ and the definition of $\Omega_3$. Let now $p (\cdot): \Z \mapsto \left[2^m, 2^{m+1} \right] \cap \Z$ be the measurable function that assumes the supremum in the last term above. Hence, 
$$ 
{\bf B}_1 \lesssim m^3 \lambda |J| \sum_{\substack{k \ge m \\ 2^{m+3k}|J| \le 1}} 2^{m+3k}|J| \sum_{\substack{\widetilde{r} \in \Z \\ I_{\widetilde{r}}^k \in \calI_{0, k}}} \sum_{\substack{I_u^{m+2k} \subseteq I_{\widetilde{r}}^k \\ I_v^{m+3k} \subseteq I_u^{m+2k}}} \int_{(100J)^c} \frac{2^{m+3k}}{\left|2^{m+3k}c(J)-v+\frac{p^3(u)}{2^{2m}} \right|^2+1} \one_{I_v^{m+3k}}(x) dx.
$$
Note that since $x \in I_v^{m+3k}$ we have $\left|2^{m+3k}x-v \right| \lesssim 1$ and therefore
\begin{equation} \label{20231231eq21}
{\bf B}_1 \lesssim m^3 \lambda |J| \sum_{\substack{k \ge m \\ 2^{m+3k}|J| \le 1}} 2^{m+3k}|J| \sum_{\substack{\widetilde{r} \in \Z \\ I_{\widetilde{r}}^k \in \calI_{0, k}}} \sum_{I_u^{m+2k} \subseteq I_{\widetilde{r}}^k} \int_{(100J)^c} \frac{2^{m+3k}}{\left|2^{m+3k} \left(c(J)-x \right)+\frac{p^3(u)}{2^{2m}} \right|^2+1} \one_{I_u^{m+2k}}(x) dx.
\end{equation}
Without loss of generality, we may assume\footnote{Alternatively, one can simply apply a change variable $x'=x-c(J)$, replace $I_{\widetilde{r}}^k$ by $I_{\widetilde{r}}^k-c(J)$ and then reduce matters to the situation $c(J)=0$.} $c(J)=0$. Applying the change of variable $x \to \frac{x}{2^{m+3k}}$, and focusing first on the integral term in \eqref{20231231eq21} we have
$$
 \int_{(100J)^c} \frac{2^{m+3k}}{\left|2^{m+3k}x-\frac{p^3(u)}{2^{2m}} \right|^2+1} \one_{I_u^{m+2k}}(x) dx=\int_{\R \backslash 100 \cdot 2^{m+3k}J} \frac{\one_{I_u^{m+2k}} \left( \frac{x}{2^{m+3k}} \right)}{\left|x-\frac{p^3\left(u \right)}{2^{2m}} \right|^2+1}  dx.
$$
Observe that if $\one_{I_u^{m+2k}} \left( \frac{x}{2^{m+3k}} \right) \neq 0$, then $2^{k} u \le x \le 2^{k} \left(u+1 \right)$. Recall also that $\frac{p^3(u)}{2^{2m}} \sim 2^m$ and $k \ge m$. These facts imply that  $|x| \simeq 2^{k}|u|$ and hence $\left|x+\frac{p^3(u)}{2^{2m}} \right| \simeq 2^{k} \left|u\right|$ whenever $u\notin\{ -1, 0\}$. Thus, 
\begin{equation}
\sum_{u \in \Z}\int_{\R \backslash 100 \cdot 2^{m+3k}J} \frac{\one_{I_u^{m+2k}} \left( \frac{x}{2^{m+3k}} \right) }{\left|x-\frac{p^3\left(u \right)}{2^{2m}} \right|^2+1} dx \lesssim \int_{\R} \frac{1}{\left|x-\frac{p^3(0)}{2^{2m}} \right|^2+1} dx+\int_{\R} \frac{1}{\left|x-\frac{p^3(-1)}{2^{2m}} \right|^2+1} dx+ \sum_{u\notin\{ -1, 0\}} \frac{2^{k}}{\left(2^{k} \left|u \right| \right)^2} \lesssim 1. \nonumber
\end{equation}
Plugging the above estimate back to \eqref{20231231eq21}, we have 
\begin{equation} \label{20240102eq11}
{\bf B}_1 \lesssim m^3 \lambda |J| \sum\limits_{\substack{k \ge m \\ 2^{m+3k}|J| \le 1}} 2^{m+3k} |J|\lesssim m^3 \lambda |J|.
\end{equation} 

\medskip 

\noindent\underline{\textit{Estimate of ${\bf B}_2$.}} Using again \eqref{20231231eq11}, Fubini and $x \in I_v^{m+3k}$, we see that 
\begin{eqnarray*}
&& {\bf B}_2 \lesssim \frac{|F_4|}{|F_2|} \sum_{\substack{k \ge m \\ 2^{m+3k}|J| \ge 1}} \sum_{\substack{\widetilde{r} \in \Z \\ I_{\widetilde{r}}^k \in \calI_{0, k}}} \sum_{\substack{I_u^{m+2k}\subseteq I_{2^m \widetilde{r}+ s}^{m+k} \subseteq I_{\widetilde{r}}^k \\ u \in \calA_{\widetilde{r}}^{l}(f_2) \\ I_v^{m+3k} \subseteq I_u^{m+2k}}} \frac{1}{2^{m-l}}  \sum_{p \in S_{\widetilde{r}}^l(f_1)}  \frac{\left| \left\langle f_2, \Phi_{P_{m+2k} \left(u+\frac{(p+s)^2}{2^m} \right)} \right \rangle \right|}{\left|I_u^{m+2k} \right|^{\frac{1}{2}}}  \\
&&  \quad \quad \quad \quad \cdot \int_{(100J)^c}  \int_{\R} \frac{2^{m+3k}}{\left|2^{m+3k}(y+c(J)-x)+\frac{(p+s)^3}{2^{2m}}\right|^2+1} \left|b_{\widetilde{J}}(y) \right| \one_{I_v^{m+3k}}(x) dy dx. 
\end{eqnarray*}
Assuming again wlog that $c(J)=0$ we apply the change variable $x \to \frac{x}{2^{m+3k}}$ in order to deduce
\begin{eqnarray*} 
&& {\bf B}_2\lesssim \frac{|F_4|}{|F_2|} \sum_{\substack{k \ge m \\ 2^{m+3k}|J| \ge 1}} \sum_{\substack{\widetilde{r} \in \Z \\ I_{\widetilde{r}}^k \in \calI_{0, k}}} \sum_{\substack{I_u^{m+2k} \subseteq I_{2^m \widetilde{r}+ s}^{m+k}\subseteq I_{\widetilde{r}}^k \\ u \in \calA_{\widetilde{r}}^{l}(f_2)}} \frac{1}{2^{m-l}}  \sum_{p \in S_{\widetilde{r}}^l(f_1)}  \frac{\left| \left\langle f_2, \Phi_{P_{m+2k} \left(u+\frac{(p+s)^2}{2^m} \right)} \right \rangle \right|}{\left|I_u^{m+2k} \right|^{\frac{1}{2}}}  \nonumber \\
&& \quad \quad \quad \quad \cdot \sup_{p\sim 2^m} \left(\sum_{I_v^{m+3k} \subseteq I_u^{m+2k}}  \int_{\R \backslash 2^{m+3k+3} J}  \int_{\R} \frac{|b_J(y)|}{\left|2^{m+3k}y-x+\frac{(p+s)^3}{2^{2m}}\right|^2+1} \one_{I_v^{m+3k}}\left(\frac{x}{2^{m+3k}} \right) dy dx\right).  
\end{eqnarray*}
Let again $p(\cdot): \Z \to \left[2^m, 2^{m+1} \right] \cap \Z$ be the measurable function that achieves the supremum in the second expression above.  Using the definitions of $\Omega_3$ and $\Omega_2$ again, we see that 
\begin{eqnarray} \label{20231231eq30}
{\bf B}_2 \lesssim m^3 \sum_{\substack{k \ge m \\ 2^{m+3k}|J| \ge 1}} \sum_{\substack{\widetilde{r} \in \Z \\ I_{\widetilde{r}}^k \in \calI_{0, k}\\\substack{I_u^{m+2k}\subseteq I_{\widetilde{r}}^k }}} \int_{\R \backslash 2^{m+3k+3} J}  \int_{\R} \frac{|b_J(y)|}{\left|2^{m+3k}y-x+\frac{p^3(u)}{2^{2m}}\right|^2+1} \one_{I_u^{m+2k}}\left(\frac{x}{2^{m+3k}} \right) dy dx 
\end{eqnarray}
Observe that since $x \in \R \backslash 2^{m+3k+3}J$ and $y \in J$ (therefore, $2^{m+3k} y \in 2^{m+3k}J$), we have $\left| 2^{m+3k}y-x \right| \gtrsim 2^{m+3k} |J|$ while $\frac{p^3(u)}{2^{2m}} \simeq 2^m$. These facts yield that $\left|2^{m+3k}y-x+\frac{p(u)^3}{2^{2m}} \right|$ is comparable to  $\left|2^{m+3k}y-x\right|$ for all but at most $\log \left|\frac{p(u)^3}{2^{2m}} \right| \simeq m$ many $k's$. Consequently, it is natural to decompose \eqref{20231231eq30} into two parts:
\begin{equation} \label{20240102eq02}
\eqref{20231231eq30} \le {\bf B}_{2, 1}+{\bf B}_{2, 2},
\end{equation} 
where
$$ {\bf B}_{2, 1}:=m^3 \sum_{\substack{k \ge m \\ 1\leq 2^{m+3k}|J| \leq 10\cdot 2^m}} \sum_{\substack{\widetilde{r} \in \Z \\ I_{\widetilde{r}}^k \in \calI_{0, k}\\\substack{I_u^{m+2k}\subseteq I_{\widetilde{r}}^k }}} \int_{\R \backslash 2^{m+3k+3}J}  \int_{\R} \frac{|b_J(y)|}{\left|2^{m+3k}y-x+\frac{p^3(u)}{2^{2m}}\right|^2+1} \one_{I_u^{m+2k}}\left(\frac{x}{2^{m+3k}} \right) dy dx 
$$
and
$$
{\bf B}_{2, 2}:=m^3 \sum_{\substack{k \ge m \\ 2^{m+3k}|J| \ge 10\cdot 2^m}} \sum_{\substack{\widetilde{r} \in \Z \\ I_{\widetilde{r}}^k \in \calI_{0, k}\\\substack{I_u^{m+2k}\subseteq I_{\widetilde{r}}^k }}} \int_{\R \backslash 2^{m+3k+3}J}  \int_{\R} \frac{|b_J(y)|}{\left|2^{m+3k}y-x+\frac{p^3(u)}{2^{2m}}\right|^2+1} \one_{I_u^{m+2k}}\left(\frac{x}{2^{m+3k}} \right) dy dx \,.
$$

\medskip 

\noindent\underline{Estimate of ${\bf B}_{2, 1}$.} Note that for $m$ being fixed, there are at most $\lesssim m$ many $k$'s such that $1 \le 2^{m+3k}|J| \le 10 \cdot 2^m$. Applying the change variable $z=x-2^{m+3k}y$ we have
$$
{\bf B}_{2, 1} \lesssim m^3 \sum_{\substack{k \ge m \\ 1 \le 2^{m+3k}|J| \le 10 \cdot 2^m}} \sum_{\substack{\widetilde{r} \in \Z \\ I_{\widetilde{r}}^k \in \calI_{0, k}\\\substack{I_u^{m+2k}\subseteq I_{\widetilde{r}}^k }}}\int_{\R \backslash 2^{m+3k+2}J} \int_\R \frac{\left|b_J(y) \right|}{\left|z-\frac{p^3\left(u\right)}{2^{2m}} \right|^2+1} \one_{I_u^{m+2k}} \left(\frac{z}{2^{m+3k}}+y \right)dydz. 
$$
Observe that for $y \in J$, $\one_{I_u^{m+2k}} \left(\frac{z}{2^{m+3k}}+y \right) \le \one_{2I_u^{m+2k}} \left( \frac{z}{2^{m+3k}} \right)$. Therefore, 
\begin{eqnarray*}
&& {\bf B}_{2, 1} \lesssim m^3 \lambda |J| \sum_{\substack{k \ge m \\ 1 \le 2^{m+3k}|J| \le 10 \cdot 2^m}} \sum_{\substack{\widetilde{r} \in \Z \\ I_{\widetilde{r}}^k \in \calI_{0, k}\\\substack{I_u^{m+2k}\subseteq I_{\widetilde{r}}^k }}} \int_{\R \backslash 2^{m+3k+2}J} \frac{1}{\left|z-\frac{p^3\left(u\right)}{2^{2m}} \right|^2+1} \one_{2I_u^{m+2k}} \left(\frac{z}{2^{m+3k}} \right)dz. 
\end{eqnarray*}
Note also that if $\one_{2I_u^{m+2k}} \left(\frac{z}{2^{m+3k}} \right) \neq 0$, then $2^{k}(u-1) \le z \le 2^{k} (u+2)$. Thus 
\begin{equation} \label{20240102eq03}
{\bf B}_{2, 1} \lesssim m^4 \lambda|J| \left(4 \int_{\R} \frac{dz}{z^2+1}+\sum_{k \ge m}\sum_{u \notin \{-2, -1, 0, 1\}} \frac{2^{k}}{\left(2^{k} |u|\right)^2} \right) \lesssim m^3\lambda|J|, 
\end{equation} 
which concludes the estimate of ${\bf B}_{2, 1}$.
\medskip 

\noindent\underline{Estimate of ${\bf B}_{2, 2}$. } By assumption, one has $|x-2^{m+3k}y| \ge 2^{m+3k+2}|J| \ge 10\cdot 2^m$, and hence $\left|2^{m+3k}y-x+\frac{p^3\left(u \right)}{2^{2m}} \right| \approx |x|$. Moreover, if $\one_{I_u^{m+2k}}\left(\frac{x}{2^{m+3k}} \right)  \neq 0$, then $2^{k}u\le x \le 2^{k}(u+1)$. Therefore, 
\begin{equation} \label{20240102eq04} 
{\bf B}_{2, 2} \lesssim m^3\lambda|J| \left(\int_{\R} \frac{1}{x^2+1}dx+ \sum_{k \ge m} \sum_{u\notin\{-1, 0\}} \frac{2^{k}}{2^{2k} u^2} \right) \lesssim m^3 \lambda|J|. 
\end{equation} 

\medskip 

Finally, combining \eqref{20240102eq01}, \eqref{20240102eq11}, \eqref{20240102eq02}, \eqref{20240102eq03}, \eqref{20240102eq04}, the desired estimate \eqref{20231231eq14} follows. 

\medskip 

\noindent\textsf{\underline{Step VI}: Treatment of $\frakT_{0, l}$ - $L^p$ estimates.} The case $1<p \le 2$ is just a consequence of real interpolation between the $p=2$ case (\textsf{Step IV}) and $p=1$ case (\textsf{Step V}). The other part of the range, i.e., $2<p<\infty$, follows from duality. This follows a standard application of the \emph{Khintchine's Inequality} (see, e.g., \cite[Theorem 2.5]{MS13}). For reader's convenience we provide here the sketch of the argument. Let 
\begin{enumerate}
\item [$\bullet$] $\left\{\omega_{I_v^{m+3k}} \right\}_{I_v^{m+3k}}$ be a sequence of i.i.d random variables on $[0, 1]$ with each $\omega_j$ taking values in $\{\pm 1\}$ with equal probability (i.e. the Rademacher functions);
\item [$\bullet$] $\left\{H_{I_v^{m+3k}} \right\}_{I_v^{m+3k}}$ be the $L^2$ normalized Haar functions associated to the dyadic intervals $\left\{I_v^{m+3k} \right\}$.
\end{enumerate}
Let $\bm{\ep}=(\ep_j)_j, \bm{\mu}=(\mu_j)_j\in \ell^{\infty}(\Z)$ be two sequences with $\|\bm{\ep}\|_{\ell^\infty(\Z)},\,\|\bm{\mu}\|_{\ell^\infty(\Z)}\leq 1$. Now, for fixed $f_1,\,f_2$ (here we may assume wlog that $f_2\geq 0$) and $\bm{\ep}, \bm{\mu}$ as before, we define the family of linear operators indexed by $t\in [0,1]$  as follows:
\begin{eqnarray*}
&& \calL^{\bm{\ep}, \bm{\mu}}_{l, 0, \omega(t)}(f_1, f_2, f_4)(x):=\frac{|F_4|}{|F_1|}\frac{|F_4|}{|F_2|} \sum_{\substack{k \ge m \\ r \in \Z \\I_{\widetilde{r}}^k \in \calI_{0, k}}} \sum_{\substack{I_{2^m \widetilde{r}+ s}^{m+k} \subseteq I_{\widetilde{r}}^k \\ I_u^{m+2k} \subseteq I_{2^m \widetilde{r}+ s}^{m+k} \\ I_v^{m+3k} \subseteq I_u^{m+2k}}} 
\frac{\int_{I_{\widetilde{r}}^k}f_1}{\left|I_{\widetilde{r}}^k \right|} \\
&& \quad \cdot \left(\frac{1}{2^{m-l}} \sum_{p \in S_{\widetilde{r}}^l(f_1)} \ep_{u+\frac{(p+s)^2}{2^m}}\:\mu_{v-\frac{(p+s)^3}{2^{2m}}}\: \frac{ \left \langle f_2, \Phi_{P_{m+2k}\left(u+\frac{(p+s)^2}{2^m} \right)} \right \rangle }{\left|I_u^{m+2k} \right|^{\frac{1}{2}}}   \left\langle f_4, \Phi_{P_{m+3k} \left(v-\frac{(p+s)^3}{2^{2m}} \right)} \right \rangle\right)\,\omega_{I_v^{m+3k}}(t)\, H_{I_v^{m+3k}}(x). 
\end{eqnarray*}
Note that by Khintchine's inequality, one has
$$
\left\|\frakT_{0, l}(f_1, f_2, f_4) \right\|_{L^p(\R)}^p \lesssim \sup_{\|\bm{\ep}\|_{\ell^\infty(\Z)},\,\|\bm{\mu}\|_{\ell^\infty(\Z)}\leq 1} \int_{\R} \int_0^1 \left| \calL^{\bm{\ep}, \bm{\mu}}_{l, 0, \omega(t)}(f_1, f_2, f_4)(x) \right|^p dtdx. 
$$
The desired $L^p$ estimate of $\frakT_{0, l}(f_1, f_2, f_4)$ for $2<p<\infty$ then follows by showing that for any $\bm{\ep}, \bm{\mu}$ in the unit ball of $\ell^{\infty}(\Z)$ a suitable ``dual" expression $\calL^{\bm{\ep}, \bm{\mu},*}_{l,0, \omega(t)}$ verifies\footnote{In the $t-$pointwise dual form defined by $\left \langle \calL^{\bm{\ep}, \bm{\mu}}_{l, 0, \omega(t)}(f_1, f_2, f_4), g \right \rangle=:\left \langle f_4, \calL^{\bm{\ep}, \bm{\mu},*}_{l, 0, \omega(t)}(f_1, f_2, g) \right \rangle$ one may assume that $g$ is supported on $\supp \ f_4 \subseteq F_4'$, which is a union of dyadic intervals of the forms $I_u^{m+2k}$.} uniformly in $t\in[0,1]$ and $\bm{\ep}, \bm{\mu}$ the following:
$$
\left\|\calL^{\bm{\ep}, \bm{\mu},*}_{l,0, \omega(t)}(f_1, f_2, g) \right\|_{L^2(\R)} \lesssim m^3 \left\|g\right\|_{L^2(\R)} \quad \textrm{and} \quad \left\|\calL^{\bm{\ep}, \bm{\mu},*}_{l, 0, \omega(t)} (f_1, f_2, g) \right\|_{L^{1, \infty}(\R)} \lesssim m^6 \left\|g \right\|_{L^1(\R)}\,.
$$
These relations can be proved in the similar fashion with their analogues treated in \textsf{Step IV} and \textsf{Step V}. We leave further details to the interested reader.

\medskip 

\noindent\textsf{\underline{Step VII}: Treatment of $\frakT_{\beta, l}$.} Recall that our goal in this case is to show that for any $1<p<\infty$, one has
\begin{equation} \label{20240102eq35}
\left\|\frakT_{\beta, l}(f_1, f_2, f_4) \right\|_{L^p(\R)} \lesssim_p m^{6} 2^{-100\beta} \left\|f_4 \right\|_{L^p(\R)}.
\end{equation} 
The estimate \eqref{20240102eq35} is an immediate consequence of interpolation, dualization (for $p>2$) together with  
\begin{enumerate}
\item [(1).] $L^2 \mapsto L^2$  estimate: for any $N \in \N$, 
$$
\left\|\frakT_{\beta, l}(f_1, f_2, f_4) \right\|_{L^2(\R)} \lesssim_{N} m^3 2^{-100\beta N} \left\|f_4 \right\|_{L^2(\R)};
$$
\item [(2).] $L^1 \mapsto L^{1, \infty}$ estimate: 
$$
\left\|\frakT_{\beta, l}(f_1, f_2, f_4) \right\|_{L^{1, \infty}(\R)} \lesssim 2^{\beta} m^6 \left\|f_4 \right\|_{L^1(\R)}.
$$
\end{enumerate}
The above is proved following similar arguments to the ones presented at \textsf{Step IV},  \textsf{Step V} and \textsf{Step VI} at which one now adds the two items addressing the specifics of the present situation:
\begin{itemize}
\item for $I_{\widetilde{r}}^k \in \calI_{\beta, k}$, one has the counterpart of \eqref{20231231eq05}:
\begin{equation} \label{20240103eq01}
\frac{\int_{I_{\widetilde{r}}^k}|f_1|}{\left|I_{\widetilde{r}} \right|} \lesssim 2^{\beta} \frac{|F_1|}{|F_4|}\,;
\end{equation} 
\item for any $N\in \N$, 
\begin{equation} \label{20231027eq01*}
\left\|\calS_{\calI_{\beta}} f_4 \right\|_{L^2(\R)} \lesssim_N 2^{-100\beta N} \left\|f_4 \right\|_{L^2(\R)},  
\end{equation} 
where here $\calI_{\beta}:=\bigcup\limits_{k \ge m} \calI_{\beta, k}$ and for $\calI$ a collection of dyadic intervals one defines
\begin{equation} \label{20231027eq02*}
\calS_\calI f(x):= \left( \sum_{I_v^{m+3k} \in \calI} \frac{\left| \left \langle f, \Phi_{P_{m+3k}(v)} \right \rangle \right|^2}{|I_v^{m+3k}|} \one_{I_v^{m+3k}}(x) \right)^{\frac{1}{2}}\,.
\end{equation} 
\end{itemize}
\hfill\qedsymbol{}

\subsection{Treatment of the main diagonal single scale component $\Lambda_{m}^{k}$, with $0 \le k \le m$.} \label{20231112sec01}

In contrast with Proposition \ref{Mainprop}, our aim in this section is to prove the following 

\begin{prop} \label{Mainpropsinglesc} 
Fix $m\in\N$ and $0 \le k \le m$. Also, as before, let $\{F_j\}_{j=1}^{4}$ be any real measurable sets of finite measure. Then, there exists $F_4'\subseteq F_4$ major subset such that for any $|f_j|\leq \chi_{F_j}$ with $1\leq j\leq 3$ and $|f_4|\leq \chi_{F'_4}$ and any $\vec{\theta}=(\theta_1, \theta_2, \theta_3)$ with $0<\theta_j<1$  and $\theta_1+\theta_2+\theta_3\leq 2$ the following holds:
\begin{equation}\label{diaglargeboundsgsc}
\left|\Lambda_m^{k}(\vec{f})\right|\lesssim_{\vec{\theta}} |F_1|^{\theta_1}\,|F_2|^{\theta_2}\,|F_3|^{\theta_3}\,|F_4|^{1-\theta_1-\theta_2-\theta_3}\,.
\end{equation}
\end{prop}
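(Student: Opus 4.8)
The plan is to deduce \eqref{diaglargeboundsgsc} from a short list of restricted weak-type estimates for the quadrilinear form $\Lambda_m^k$, in the spirit of the treatment of the multiscale piece via Proposition \ref{Mainprop}, the decisive simplification being that we now work at a \emph{single} dyadic scale $k$, so no summation in $k$ -- and hence no polynomial-in-$m$ loss -- is ever incurred. Let $\calR:=\{(\theta_1,\theta_2,\theta_3):0<\theta_j<1,\ \theta_1+\theta_2+\theta_3\le 2\}$; its closure is the convex hull of the seven vertices $(0,0,0)$, $(1,0,0)$, $(0,1,0)$, $(0,0,1)$, $(1,1,0)$, $(1,0,1)$, $(0,1,1)$. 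Since the desired bound is a monomial in $|F_1|,\dots,|F_4|$, once we have, for each vertex $v=(v_1,v_2,v_3)$, a major subset $F_4^{(v)}\subseteq F_4$ and the estimate $|\Lambda_m^k(\vec f)|\lesssim|F_1|^{v_1}|F_2|^{v_2}|F_3|^{v_3}|F_4|^{1-v_1-v_2-v_3}$ for all $|f_j|\le\chi_{F_j}$ ($j\le 3$), $|f_4|\le\chi_{F_4^{(v)}}$, we may set $F_4':=\bigcap_v F_4^{(v)}$ -- still a major subset, there being finitely many $v$ -- and recover \eqref{diaglargeboundsgsc} for any interior $\vec\theta=\sum_v\lambda_v v$ by taking the corresponding weighted geometric mean of the seven vertex bounds. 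Four of these vertices, namely those with $v_1+v_2+v_3\le 1$, require no major subset: recalling from \eqref{20230920eq01} that up to harmless discretization $\Lambda_m^k$ is the honest single-scale form $\int_{\R^2}(f_1*\check\phi_{m+k})(x-t)\,(f_2*\check\phi_{m+2k})(x+t^2)\,(f_3*\check\phi_{m+3k})(x+t^3)\,(f_4*\check\phi_{m+3k})(x)\,\rho_k(t)\,dt\,dx$ with $\rho_k$ a bump adapted to $\{|t|\sim 2^{-k}\}$ and $\int|\rho_k(t)|\,dt\lesssim 1$, Hölder's inequality in $x$ with the exponents dictated by $v$, together with $\|f_j*\check\phi\|_{L^q}\lesssim\|f_j\|_{L^q}$ and $\int|\rho_k|\lesssim 1$, gives $|\Lambda_m^k(\vec f)|\lesssim\prod_j|F_j|^{v_j}$ at once.

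The substance of the argument lies in the three remaining vertices, obtained by permuting $(1,1,0)$. Fix $(1,1,0)$; the goal is a major subset with $|\Lambda_m^k(\vec f)|\lesssim|F_1|\,|F_2|/|F_4|$. I would take $F_4^{(v)}=F_4\setminus\Omega$, where $\Omega$ is built as in Step II of the proof of Proposition \ref{20231227prop01} but now only at the finitely many scales present in $\Lambda_m^k$: the Hardy--Littlewood set where $M\one_{F_1}$ or $M\one_{F_2}$ exceeds a large multiple of $|F_1|/|F_4|$ resp.\ $|F_2|/|F_4|$, together with the \emph{relative-density} sets $\bigcup\{I:|F_i\cap I|/|F_4\cap I|\ge C|F_i|/|F_4|\}\cap F_4$ over the scales $2^{-(m+k)}$ and $2^{-(m+2k)}$ relevant to $f_1$ and $f_2$; the usual maximality argument gives $|F_4^{(v)}|\ge\tfrac12|F_4|$, and crucially all the constants here are absolute, as there is no scale summation to amplify them into powers of $m$. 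With $f_4$ supported in $F_4^{(v)}$ I would bound $|f_3*\check\phi_{m+3k}|\le 1$, leaving a \emph{bilinear} expression in $(f_1,f_2)$ paired against $f_4$, and control it via the wave-packet discretization \eqref{bddiagklarge} specialized to the single $k$. The engine is the elementary \emph{single-scale local smoothing} afforded by the pairwise distinctness of the exponents $1,2$ (non-resonance): for fixed $x$, the constraint $x-t\in I$ with $|I|\sim 2^{-(m+k)}$ confines $t$ to an interval of that length, whence $x+t^2$ lies in an interval of length $\sim 2^{-(m+2k)}$ -- exactly the resolution scale of $f_2*\check\phi_{m+2k}$ -- so each $f_1$-tile interacts with $O(1)$ many $f_2$-tiles; this purely combinatorial incidence count replaces the average-shifted-maximal operators of the multiscale setting, and, pairing the resulting sum against $\one_{F_4^{(v)}}$ (on which the localized densities of $F_1,F_2$ are $\lesssim|F_1|/|F_4|$ resp.\ $|F_2|/|F_4|$ by construction of $\Omega$), it delivers the two full density powers $\tfrac{|F_1|}{|F_4|}\cdot\tfrac{|F_2|}{|F_4|}$ with no loss in $m$. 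The other two hard vertices are identical, now using the non-degeneracy of the pairs $(-t,t^3)$ and $(t^2,t^3)$, again guaranteed by non-resonance.

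The main obstacle -- and precisely the reason the exponent of $|F_4|$ cannot be lowered past $-1$ at a single scale -- is that the $t$-integration is one-dimensional: the curvature mechanism can ``charge'' at most \emph{two} of the three inputs $f_1,f_2,f_3$ to their local $F_4$-densities, the third being paired off against $f_4$ through the $x$-integration, where no analogous gain is available (the $2^{-3k}$-neighbourhood of a Hardy--Littlewood exceptional set need not be small). This caps $\theta_1+\theta_2+\theta_3$ at $2$, hence the threshold $r>\tfrac12$ in Theorem \ref{mainthm01}; reaching the conjectural $r>\tfrac13$ would require the genuinely harder single-scale estimate flagged in Observation \ref{max range}. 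A secondary, routine point is the passage from the continuous form to its wave-packet model and the control of the Schwartz tails of the $\check\phi$'s: since every curve displacement ($\sim2^{-k},2^{-2k},2^{-3k}$) exceeds the matching resolution scale by the factor $2^m$, these tails contribute only rapidly decaying errors.
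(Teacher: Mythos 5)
Your skeleton---establish restricted weak-type bounds at the vertices of the closure of the region $\{0<\theta_j<1,\ \theta_1+\theta_2+\theta_3\le2\}$ for a common major subset and recover the interior points by taking weighted geometric means---is exactly the paper's endgame (``via some standard interpolation arguments''), and your treatment of the four vertices with $v_1+v_2+v_3\le 1$ by H\"older is correct. The gap is in the three hard vertices, which are the substance of the proposition. You propose to reach $(1,1,0)$ by extracting \emph{both} densities $\frac{|F_1|}{|F_4|}$ and $\frac{|F_2|}{|F_4|}$ simultaneously from a tile-incidence count, ``with no loss in $m$'' because there is no scale summation. This does not work as described. After discretization the relevant quantity for fixed $(r,u,v)$ is $\frac{1}{2^m}\sum_{p\sim 2^m}A_pB_p$, where $A_p$ is the average of $|f_1|$ over $I^{m+k}_{r-p}$ and $B_p$ that of $|f_2|$ over $I^{m+2k}_{u+p^2/2^m}$. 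Each \emph{separate} average $\frac{1}{2^m}\sum_pA_p$ and $\frac{1}{2^m}\sum_pB_p$ is $\lesssim M\one_{F_i}$ evaluated near the support of $f_4$, hence $\lesssim|F_i|/|F_4|$; but the average of the \emph{product} is not controlled by the product of the averages: Cauchy--Schwarz together with $A_p,B_p\le1$ only yields $\left(\frac{1}{2^m}\sum_pA_p\right)^{1/2}\left(\frac{1}{2^m}\sum_pB_p\right)^{1/2}$, i.e.\ the vertex $(\tfrac12,\tfrac12,0)$, which already lies in the hull of your easy vertices, while $\sup_pA_p$ can be as large as $\min\{1,2^m|F_1|/|F_4|\}$. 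Decoupling the two $p$-averages is precisely what the level sets $S^l_{\widetilde r}(f_1)$, the relative-density sets $\Omega_2,\Omega_3$ and the average shifted maximal operators \eqref{20231229eq20} accomplish in Proposition \ref{20231227prop01}, and the resulting $m^{O(1)}$ losses come from pigeonholing over the $m+1$ density levels and from the shifted-maximal operator norms---not from summation in $k$---so they would persist at a single scale and contradict the loss-free bound you assert.

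The repair is that at a single scale one never needs two density factors. The paper's proof of, say, the vertex $(1,0,1)$ runs: bound the $f_2$-average trivially by $1$; observe that a nonzero $f_4$-average forces the ambient interval $I^k_{l/2^m}$ to meet $\Omega^c$, where $\Omega:=\bigcup_{j=1}^3\left\{M\one_{F_j}\ge C|F_j|/|F_4|\right\}$; sum the $f_3$-contributions over all $p\sim2^m$ and $a,b\sim2^k$ to produce a single maximal average of $|f_3|$ at scale $2^{-k}$, hence one factor $|F_3|/|F_4|$; and integrate $f_1$ in $L^1$ to produce $|F_1|$ (with a separate trivial case analysis when $\max\{|F_1|,|F_3|\}\ge|F_4|$). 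One sup bound, one $L^1$ norm, one density factor---all with the plain Hardy--Littlewood exceptional set and absolute constants; the other two hard vertices follow by permuting the roles. A last small point: the intersection of finitely many major subsets need not be major, so you should take $F_4':=F_4\setminus\bigcup_v\Omega_v$ with the constants arranged so that $\left|\bigcup_v\Omega_v\right|\le\frac12|F_4|$; the paper sidesteps this by using a single exceptional set for all three estimates.
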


\begin{proof}

Departing from the single scale component
\begin{eqnarray*}
&& \Lambda_{m}^{k}(\vec{f})=\frac{1}{2^m} \sum_{p \sim 2^m}  \sum_{r \in \Z} \frac{1}{\left|I_r^{m+k} \right|^{\frac{1}{2}}} \left| \left \langle f_1, \Phi_{P_{m+k}\left(r-p\right)} \right \rangle \right|  \cdot   \\
&& \left [ \sum_{I_u^{m+2k} \subseteq I_r^{m+k}}  \frac{1}{\left|I_u^{m+2k} \right|^{\frac{1}{2}}} \left| \left \langle f_2, \Phi_{P_{m+2k}\left(u+\frac{p^2}{2^m} \right)} \right \rangle \right| \left(\sum_{I_v^{m+3k} \subseteq I_u^{m+2k}} \left| \left \langle f_3, \Phi_{P_{m+3k}(v+\frac{p^3}{2^{2m}})} \right \rangle \right| \left | \left \langle f_4, \Phi_{P_{m+3k}(v)} \right \rangle \right | \right) \right],
\end{eqnarray*}
by a further re-parameterization of the spatial intervals at the smaller scales we have
\begin{eqnarray*} \label{20231009eq01}
&&\Lambda_{m}^{k}(\vec{f})=\frac{1}{2^m} \sum_{p\sim 2^m}  \sum_{r \in \Z} \sum_{a, b \sim 2^k} \frac{1}{\left|I_0^{m+k} \right|^{\frac{1}{2}}} \left| \left \langle f_1, \Phi_{P_{m+k}\left(r-p\right)} \right \rangle \right| \cdot  \frac{1}{\left|I_0^{m+2k} \right|^{\frac{1}{2}}} \left| \left \langle f_2, \Phi_{P_{m+2k}\left(2^k r+a+\frac{p^2}{2^m} \right)} \right \rangle \right| \nonumber \\
&& \quad \quad \quad \quad \quad \quad \cdot   \left| \left \langle f_3, \Phi_{P_{m+3k}(2^{2k}r+2^{k}a+b+\frac{p^3}{2^{2m}})} \right \rangle \right| \left | \left \langle f_4, \Phi_{P_{m+3k}(2^{2k}r+2^{k}a+b)} \right \rangle \right |\nonumber\\
&&\lesssim \frac{1}{2^m} \cdot \frac{1}{2^{m+3k}} \sum_{\substack{p \sim 2^m \\ r \in \Z}} \sum_{a, b \sim 2^k} \frac{\int_{I_{r-p}^{m+k}}|f_1|}{\left|I_0^{m+k}\right|} \cdot  \frac{\int_{I_{2^k r+a+\frac{p^2}{2^m}}^{m+2k}} |f_2|}{\left|I_0^{m+2k} \right|}  \cdot \frac{\int_{I^{m+3k}_{2^{2k}r+2^ka+b+\frac{p^3}{2^{2m}}}}|f_3|}{\left|I_0^{m+3k} \right|} \cdot \frac{\int_{I^{m+3k}_{2^{2k}r+2^ka+b}}|f_4|}{\left|I_0^{m+3k} \right|}\,. 
\end{eqnarray*}
Following the standard restricted weak-type estimate argument, we assume:
\begin{enumerate}
\item [$\bullet$] $f_j=\one_{F_j}$, where $F_j, j=1, 2, 3$ are any arbitrary finitely measurable sets;
\item [$\bullet$] $F_4$ is any finitely measurable set;
\item [$\bullet$] $\Omega:=\bigcup_{j=1}^3 \left\{M \one_{F_j} \ge \frac{C|F_j|}{|F_4|} \right\}$, where $C$ is sufficiently large such that $\left|F_4' \right| \ge \frac{|F_4|}{2}$ with $F_4':=F_4 \backslash \Omega$. 
\end{enumerate}

Letting $l=r-p$ and using the fact that
$$
\frac{\int_{I_{2^k r+a+\frac{p^2}{2^m}}^{m+2k}} |f_2|}{\left|I_0^{m+2k} \right|} \le 1, 
$$
we further have
\begin{equation} \label{20231009eq10}
\Lambda_m^{k} (\vec{f})\lesssim \frac{1}{2^{2m+3k}} \sum_{l \in \Z} \frac{\int_{I_l^{m+k}}|f_1|}{\left|I_0^{m+k} \right|} \left( \sum_{\substack{p \sim 2^m \\ a, b \sim 2^k}} \frac{\int_{I^{m+3k}_{2^{2k}l+2^{2k}p+2^ka+b+\frac{p^3}{2^{2m}}}} |f_3|}{\left|I_0^{m+3k} \right|} \cdot \frac{\int_{I^{m+3k}_{2^{2k}l+2^{2k}p+2^ka+b}} |f_4|}{\left|I_0^{m+3k} \right|} \right).
\end{equation} 
Next, we note that given $l \in \Z, p \sim 2^m, a, b \sim 2^k$ if $\int_{I^{m+3k}_{2^{2k}l+2^{2k}p+2^ka+b}} |f_4| \neq 0$ then $I^{m+3k}_{2^{2k}l+2^{2k}p+2^ka+b} \cap \Omega^c \neq \emptyset$ which further gives
$$
I_{\frac{l}{2^{m}}}^k \cap \Omega^c \neq \emptyset. 
$$
Using this and \eqref{20231009eq10}, we thus have 
\begin{equation} \label{20231009eq11}
\Lambda_m^{k}(\vec{f}) \lesssim  \frac{1}{2^{2m+3k}} \sum_{l \in \Z, \  I_{\frac{l}{2^{m}}}^k \cap \Omega^c \neq \emptyset } \frac{\int_{I_l^{m+k}}|f_1|}{\left|I_0^{m+k} \right|} \left( \sum_{\substack{p \sim 2^m \\ a, b \sim 2^k}} \frac{\int^{m+3k}_{I_{2^{2k}l+2^{2k}p+2^ka+b+\frac{p^3}{2^{2m}}}} |f_3|}{\left|I_0^{m+3k} \right|} \right).
\end{equation}

We consider two major cases.

\medskip 
\noindent\textsf{\underline{Case I}: $\max\{|F_1|, |F_3|\} \ge |F_4| $}

We split our analysis in two subcases
\medskip 

\noindent\textit{\underline{Case I.1}: if $|F_3| \ge |F_4|$}, then by \eqref{20231009eq11}, 
\begin{eqnarray*}
\Lambda_m^{k} (\vec{f})%
&\lesssim& \frac{1}{2^{2m+3k}} \cdot 2^{m+k} \cdot 2^{m+2k} \cdot \left(\sum_{l \in \Z} \int_{I_l^{m+k}} |f_1| \right) \\
& \lesssim& |F_1| \lesssim |F_1||F_3||F_4|^{-1}. 
\end{eqnarray*}

\medskip 

\noindent\textit{\underline{Case I.2}: if $|F_1| \ge |F_4|$}, then by \eqref{20231009eq11}, 
\begin{eqnarray*}
\Lambda_m^{k} (\vec{f})%
& \lesssim & \frac{1}{2^{2m+3k}} \sum_{l \in \Z} \sum_{p \sim 2^m} \sum_{a, b \sim 2^k} \frac{\int_{I^{m+3k}_{2^{2k}l+2^{2k}p+2^ka+b+\frac{p^3}{2^{2m}}}} |f_3|}{\left|I_0^{m+3k} \right|} \\
& \lesssim &  \frac{1}{2^m} \sum_{p \sim 2^m} \left( \sum_{l \in \Z} \sum_{a, b \sim 2^k} \int_{I^{m+3k}_{2^{2k}l+2^{2k}p+2^ka+b+\frac{p^3}{2^{2m}}}} |f_3| \right) \\
&\lesssim& |F_3| \lesssim |F_1||F_3||F_4|^{-1}.
\end{eqnarray*}

\medskip 

\noindent\textit{\underline{Case II}: $|F_1|, |F_3| <|F_4| $}

In this situation since $I_{\frac{l}{2^m}}^k \cap \Omega^c \neq \emptyset$ and $
\dist \left( I_{\frac{l}{2^m}}^k, I^{m+3k}_{2^{2k}l+2^{2k}p+2^ka+b+\frac{p^3}{2^{2m}}} \right) \leq \frac{2}{2^{k}}$, we have
$$
\left(5 \bigcup_{p' \sim 2^m, a', b' \sim 2^k} I^{m+3k}_{2^{2k}l+2^{2k}p'+2^ka'+b'+\frac{(p')^3}{2^{2m}}} \right) \cap \Omega^c \neq \emptyset.
$$
This implies
\begin{eqnarray*}
\Lambda_m^{k}(\vec{f})%
& \lesssim & \frac{1}{2^{2m+3k}} \cdot 2^{m+k} \cdot 2^{m+3k} \cdot \frac{1}{2^k} \sum_{\substack{l \in \Z \\ I_{\frac{l}{2^m}}^k \cap \Omega^c \neq \emptyset}} \int_{I_l^{m+k}} |f_1| \left( \frac{1}{\left|I_{\frac{l}{2^m}}^k\right|} \sum_{\substack{p \sim 2^m \\ a, b \sim 2^k}} \int_{I^{m+3k}_{2^{2k}l+2^{2k}p+2^ka+b+\frac{p^3}{2^{2m}}}}|f_3| \right) \\
&\lesssim& \frac{|F_3|}{|F_4|} \cdot \sum_{l \in \Z} \int_{I_l^{m+k}} |f_1| \lesssim |F_1||F_3||F_4|^{-1}. 
\end{eqnarray*}

\medskip

Therefore, we have shown that
$$
\Lambda_m^{k}(\vec{f}) \lesssim |F_1||F_3||F_4|^{-1}.
$$
Similar reasonings also give
$$
\Lambda_m^{k}(\vec{f}) \lesssim |F_1||F_2||F_4|^{-1} \quad \textrm{and} \quad\Lambda_m^{k}(\vec{f}) \lesssim |F_2||F_3||F_4|^{-1}\,.
$$
Once at this point, via some standard interpolation arguments we obtain the desired conclusion.

\end{proof}

\subsection{Treatment of the off-diagonal terms for $k \ge 0$} \label{secoffdiag}
Recall that the off-diagonal cases refer to the situation when $\max\{j, l, m\}-\min\{j, l, m\}>300$, and there are two different sub-cases under this situation. 

\subsubsection{Treatment of the stationary off-diagonal cases} \label{20240219subsubsec01}
 In what follows, we only consider the case when $j=l>m+100$, while the proof of the other stationary off-diagonal cases follow from simple adaptations of the present case. To this end, we recall that the single-scale operator in the case under consideration is
$$
\calT_{j, m}^k(f_1, f_2, f_3)(x):=\int_{\R}  \left(f_1* \check{\phi}_{j+k}\right)(x-t) \left(f_2* \check{\phi}_{j+2k}\right)(x+t^2) \left(f_3*\check{\phi}_{m+3k}\right)(x+t^3) \rho_k(t)dt.
$$

We consider three situations.

\medskip 

\noindent \fbox{\textsf{Case I: Treatment for the case $0 \le k \le \frac{j-m}{2}$.}} Note that in this case, the dominant frequency scale has the magnitude $\sim 2^{j+2k}$, and therefore
\begin{eqnarray} \label{20231020eq01}
\left|\Lambda_{j, m}^k(\vec{f})\right|&:=&\left|\int_{\R^2}   \left(f_1* \check{\phi}_{j+k}\right)(x-t) \left(f_2* \check{\phi}_{j+2k}\right)(x+t^2) \left(f_3*\check{\phi}_{m+3k}\right)(x+t^3) \left(f_4 * \check{\phi}_{j+2k} \right)(x)\rho_k(t)dtdx\right|\nonumber \\
&\lesssim& \frac{1}{2^{2j+2k}} \sum_{\substack{z \in \Z \\ p \sim 2^j}} \left| \left(f_1*\check{\phi}_{j+k} \right) \left(\frac{z}{2^{j+2k}}-\frac{p}{2^{j+k}} \right) \right| \left| \left(f_2 * \check{\phi}_{j+2k} \right) \left(\frac{z}{2^{j+2k}}+\frac{p^2}{2^{2j+2k}} \right) \right| \nonumber  \\
&& \quad \quad \quad \quad \quad  \cdot \left| \left(f_3*\check{\phi}_{m+3k} \right) \left( \frac{z}{2^{j+2k}}+\frac{p^3}{2^{3j+3k}} \right) \right| \left| \left( f_4 * \check{\phi}_{j+2k} \right) \left(\frac{z}{2^{j+2k}} \right) \right|. 
\end{eqnarray}
Since $0 \le k \le \frac{j-m}{2}$, we have $2^{-j-2k} \le 2^{-j-k} \le 2^{-m-3k}$ and hence \eqref{20231020eq01} may be rewritten as
\begin{eqnarray} \label{20231021eq01}
&& \left|\Lambda_{j, m}^k(\vec{f})\right|\lesssim \frac{1}{2^{\frac{j-m}{2}}} \cdot 2^{2k} \sum_{\substack{r \in \Z \\ p \sim 2^j}} \left| \left \langle f_3, \Phi_{P_{m+3k} \left(r+\frac{p^3}{2^{3j-m}} \right)} \right \rangle \right| \cdot \Bigg[ \sum_{I_u^{j+k} \subseteq I_r^{m+3k}} \left| \left \langle f_1, \Phi_{P_{j+k} \left(u-p \right)} \right \rangle \right | \nonumber \\
&& \quad \quad \quad \quad \quad \quad \cdot \left( \sum_{I_v^{j+2k} \subseteq I_u^{j+k}} \left| \left \langle f_2, \Phi_{P_{j+2k} \left(v+\frac{p^2}{2^j} \right)} \right \rangle \right | \left| \left \langle f_4, \Phi_{P_{j+2k}(v)} \right \rangle \right| \right)  \Bigg ] \nonumber \\
&& =\frac{1}{2^{\frac{j-m}{2}}} \cdot 2^{2k} \sum_{\substack{ r \in \Z \\ p \sim 2^j}} \sum_{\substack{ a \sim 2^{j-m-2k} \\ b \sim 2^k}} \left| \left \langle f_3, \Phi_{P_{m+3k} \left(r+\frac{\frac{p^3}{2^{2j}}}{2^{j-m}} \right)} \right \rangle \right| \left| \left \langle f_1, \Phi_{P_{j+k} \left(2^{j-m-2k}r+a-p \right)} \right \rangle \right| \nonumber \\
&& \quad \quad \quad \quad \quad \quad \cdot \left| \left \langle f_2, \Phi_{P_{j+2k} \left(2^{j-m-k}r+2^ka+b+\frac{p^2}{2^j} \right)} \right \rangle \right|  \left| \left \langle f_4, \Phi_{P_{j+2k} \left(2^{j-m-k}r+2^ka+b \right)} \right \rangle \right|\,.
\end{eqnarray}

As usual, we let
\begin{enumerate}
\item [$\bullet$] $f_s=\one_{F_s}, s=1, 2, 3$ for $F_s$ being any arbitrary finitely measurable sets; 
\item [$\bullet$] $F_4$ be any arbitrary finitely measurable set and $\Omega:=\bigcup\limits_{s=1}^3 \left\{M\one_{F_s} \ge \frac{C|F_s|}{|F_4|} \right\}$ for $C$ being sufficiently large such that $|F_4'| \ge \frac{|F_4|}{2}$, where $F_4':=F_4 \backslash \Omega$. 
\end{enumerate}

Then by \eqref{20231021eq01}, we have
\begin{equation}\label{bounda}
\left|\Lambda_{j, m}^k(\vec{f}) \right|\lesssim \frac{1}{2^{2j+2k}} \sum_{\substack{r \in \Z \\ p \sim 2^j}} \sum_{\substack{a \sim 2^{j-m-2k} \\ b \sim 2^{k}}} \frac{\int_{I_{r+\frac{\frac{p^3}{2^{2j}}}{2^{j-m}}}^{m+3k}} |f_3|}{\left|I_0^{m+3k} \right|} \cdot \frac{\int_{I_{2^{j-m-2k}r+a-p}^{j+k}} |f_1|}{\left|I_0^{j+k} \right|} \cdot \frac{\int_{I_{2^{j-m-k}r+2^ka+b+\frac{p^2}{2^j}}^{j+2k}}|f_2|}{\left|I_0^{j+2k} \right|} \cdot \frac{\int_{I_{2^{j-m-k}r+2^ka+b}^{j+2k}}|f_4|}{\left|I_0^{j+2k} \right|}.
\end{equation}
Note that we trivially have
\begin{equation} \label{20231021eq02}
\frac{\int_{I_{2^{j-m-k}r+2^ka+b+\frac{p^2}{2^j}}^{j+2k}}|f_2|}{\left|I_0^{j+2k} \right|}  \lesssim 1, 
\end{equation}
and hence, after applying the change of variable $l=r+\frac{\frac{p^3}{2^{2j}}}{2^{j-m}}$, we deduce 
\begin{equation} \label{20231021eq03}
\left|\Lambda_{j, m}^k(\vec{f})\right| \lesssim \frac{1}{2^{2j+2k}} \sum_{l \in \Z}\frac{\int_{I_l^{m+3k}}|f_3|}{\left|I_0^{m+3k} \right|}\cdot  \sum_{\substack{p \sim 2^j \\ a \sim 2^{j-m-2k} \\ b \sim 2^{k}}}  \frac{\int_{I_{2^{j-m-2k}\left(l-\frac{\frac{p^3}{2^{2j}}}{2^{j-m}} \right) +a-p}^{j+k}} |f_1|}{\left|I_0^{j+k} \right|} \cdot \frac{\int_{I_{2^{j-m-k}\left(l-\frac{\frac{p^3}{2^{2j}}}{2^{j-m}} \right) +2^ka+b}^{j+2k}}|f_4|}{\left|I_0^{j+2k} \right|}.
\end{equation}
Finally, since $\int_{I_{2^{j-m-k}\left(l-\frac{\frac{p^3}{2^{2j}}}{2^{j-m}} \right) +2^ka+b}^{j+2k}}|f_4| \neq 0$, we have
$I_{2^{j-m-k}\left(l-\frac{\frac{p^3}{2^{2j}}}{2^{j-m}} \right) +2^ka+b}^{j+2k} \cap \Omega^c \neq \emptyset$, which further gives $I_{\frac{l}{2^{m+2k}}}^k \cap \Omega^c \neq \emptyset$; thus, by \eqref{20231021eq03}, we have
\begin{equation} \label{20201021eq04}
\left|\Lambda_{j, m}^k (\vec{f})\right| \lesssim \frac{1}{2^{2j+k}} \sum_{\substack{l \in \Z \\ I_{\frac{l}{2^{m+2k}}}^k \cap \Omega^c \neq \emptyset}}\frac{\int_{I_l^{m+3k}}|f_3|}{\left|I_0^{m+3k} \right|}\cdot  \sum_{\substack{p \sim 2^j \\ a \sim 2^{j-m-2k} }}  \frac{\int_{I_{2^{j-m-2k}\left(l-\frac{\frac{p^3}{2^{2j}}}{2^{j-m}} \right) +a-p}^{j+k}} |f_1|}{\left|I_0^{j+k} \right|}.
\end{equation}  
From here on, one applies the same reasonings as those in Section \ref{20231112sec01} in order to deduce that
\begin{equation}\label{bdhalf}
\left|\Lambda_{j, m}^{k}(\vec{f})\right| \lesssim |F_1|^{\theta_1}|F_2|^{\theta_2} |F_3|^{\theta_3} |F_4|^{1-\theta_1-\theta_2-\theta_3}, 
\end{equation}
where $0<\theta_1, \theta_2, \theta_3<1$ with $\theta_1+\theta_2+\theta_3\leq 2$.

\vspace{0.1cm}

\noindent\fbox{{\textsf{Case II: Treatment for the case $\frac{j-m}{2}<k \le j-m$.}}}  The treatment of this case is similar to the case when $0 \le k \le \frac{j-m}{2}$. Indeed, based on  $\frac{j-m}{2}< k \le j-m$, we have $2^{-j-2k} \le 2^{-m-3k} \le 2^{-j-k}$ and hence the dominant frequency scale remains $2^{j+2k}$ while the intermediate regime is now flipped. Consequently, in the same spirit with the decomposition provided at Case I, one deduces that (compare with \eqref{20231021eq01} and \eqref{bounda})
\begin{eqnarray}\label{case2diagofstat}
&&\left|\Lambda_{j, m}^k(\vec{f})\right|\lesssim\frac{1}{2^{\frac{j-m}{2}}} \cdot 2^{2k} \sum_{\substack{r \in \Z \\ p\sim 2^j}} \left| \left \langle f_1, \Phi_{P_{j+k} \left(r-p \right)} \right \rangle \right| \cdot \Bigg[ \sum_{I_u^{m+3k} \subseteq I_r^{j+k}} \left| \left \langle f_3, \Phi_{P_{m+3k} \left(u+\frac{p^3}{2^{3j-m}} \right)} \right \rangle \right| \nonumber\\
&&\quad \quad \quad \quad \quad \quad \cdot \left( \sum_{I_v^{j+2k} \subseteq I_u^{m+3k}}  \left| \left \langle f_2, \Phi_{P_{j+2k} 
\left(v+\frac{p^2}{2^j} \right)} \right \rangle \right| \left| \left \langle f_4, \Phi_{P_{j+2k} \left(v \right)} \right \rangle \right|  \right) \\
&&\lesssim \frac{1}{2^{2j+2k}} \sum_{\substack{r \in \Z \\ p \sim 2^j}} \sum_{\substack{a \sim 2^{2k+m-j} \\ b \sim 2^{j-m-k}}} \frac{\int_{I_{r-p}^{j+k}}|f_1|}{\left|I_0^{j+k} \right|}  \cdot \frac{\int_{I^{m+3k}_{2^{2k+m-j}r+a+\frac{\frac{p^3}{2^{2j}}}{2^{j-m}}}}|f_3|}{\left|I_0^{m+3k} \right|} \cdot \frac{\int_{I_{2^k r+2^{j-m-k}a+b+\frac{p^2}{2^j}}^{j+2k}} |f_2|}{\left|I_0^{j+2k} \right|}  \cdot \frac{\int_{I_{2^k r+2^{j-m-k}a+b}^{j+2k}} |f_4|}{\left|I_0^{j+2k} \right|}.\nonumber
\end{eqnarray}
The rest of reasonings are now mirroring the ones provided at Case I---we leave further details to the interested reader---which implies the same conclusion as in \eqref{bdhalf}.

\medskip

\noindent\fbox{\textsf{Case III: Treatment for the case $j-m \le k \le j$.}} Since $k \ge j-m$, we have $2^{-m-3k} \le 2^{-j-2k} \le 2^{-j-k}$ which implies that the dominant frequency scale is now $\sim 2^{m+3k}$. Therefore, in parallel to \eqref{case2diagofstat}, we deduce
\begin{eqnarray*}
&& \left|\Lambda_{j, m}^k (\vec{f})\right|\lesssim \frac{1}{2^j} \sum_{\substack{z \in \Z \\ p \sim 2^j}}  \frac{1}{\left|I_0^{j+k} \right|^{\frac{1}{2}}} \left| \left\langle f_1, \Phi_{P_{j+k} \left(r-p\right)} \right \rangle \right| \cdot \bigg[ \sum_{I_u^{j+2k} \subseteq I_r^{j+k}} \frac{1}{\left|I_0^{j+2k} \right|^{\frac{1}{2}}} \left| \left\langle f_2, \Phi_{P_{j+2k} \left(u+\frac{p^2}{2^j} \right)} \right \rangle \right | \\
&& \quad \quad \quad \quad \quad \quad  \cdot \left( \sum_{I_v^{m+3k} \subseteq I_u^{j+2k}} \left| \left \langle f_3, \Phi_{P_{m+3k} \left(v+\frac{p^3}{2^{3j-m}} \right)} \right \rangle \right| \left| \left \langle f_4, \Phi_{P_{m+3k}(v)} \right \rangle \right| \right) \bigg] \\
&& \lesssim \frac{1}{2^{j+m+3k}} \sum_{\substack{r \in \Z \\ p \sim 2^j}} \sum_{\substack{a \sim 2^k \\ b \sim 2^{k+m-j}}} \frac{\int_{I_{r-p}^{j+k}}|f_1|}{\left|I_0^{j+k} \right|} \cdot \frac{\int_{I_{2^k r+a+\frac{p^2}{2^j}}} |f_2|}{\left|I_0^{j+2k} \right|} \cdot \frac{\int_{I_{2^{2k+m-j}r+2^{k+m-j}a+b+\frac{\frac{p^3}{2^{2j}}}{2^{j-m}}}}|f_3|}{\left|I_0^{m+3k} \right|}  \cdot \frac{\int_{I_{2^{2k+m-j}r+2^{k+m-j}a+b}}|f_4|}{\left|I_0^{m+3k} \right|}.
\end{eqnarray*}
Again, applying similar reasonings with the ones in Case I we obtain that \eqref{bdhalf} holds.

\medskip 

\noindent\fbox{\textsf{Case IV: Treatment for the case $k \ge j$.}} In this case, since the dominant frequency scale is $\sim 2^{m+3k}$, we let
$$ 
\left|\Lambda_{j, m}^{\ge}(\vec{f})\right|:=\left|\sum_{k \ge j}  \int_{\R^2}  \left(f_1* \check{\phi}_{j+k}\right)\left(x-\frac{t}{2^k} \right) \left(f_2* \check{\phi}_{j+2k}\right)\left(x+\frac{t^2}{2^{2k}} \right) \left(f_3*\check{\phi}_{m+3k}\right)\left(x+\frac{t^3}{2^{3k}} \right) \left(f_4 * \check{\phi}_{m+3k} \right)(x)dtdx\right|\,.
$$
Following the approach in Section \ref{MaindiaglargeK}---compare \eqref{bddiagklarge} with the expression below---we have
\begin{eqnarray*}
&& \left|\Lambda_{j, m}^{\ge}(\vec{f})\right| \lesssim \sum_{\substack{k \ge j \\ r \in \Z}} \frac{1}{2^j} \sum_{s \sim 2^j} \frac{1}{\left|I_0^{j+k} \right|^{\frac{1}{2}}} \left| \left \langle f_1, \Phi_{P_{j+k}(r-p)} \right \rangle \right|  \cdot \Bigg [\sum_{I_u^{j+2k} \subseteq I_r^{j+k}} \frac{1}{\left|I_0^{j+2k} \right|^{\frac{1}{2}}} \left| \left \langle f_2, \Phi_{P_{j+2k}\left(u+\frac{p^2}{2^j} \right)} \right \rangle \right| \nonumber \\
&&  \quad \quad \quad \quad \quad \quad \quad \quad \cdot \left( \sum_{I_v^{m+3k} \subseteq I_u^{j+2k}} \left| \left \langle f_3, \Phi_{P_{m+3k} \left(v+\frac{\frac{p^3}{2^{2j}}}{2^{j-m}} \right)}  \right \rangle \right| \left| \left \langle f_4, \Phi_{P_{m+3k}(v)} \right \rangle \right| \right) \Bigg ].
\end{eqnarray*}
Observe that if $I_v^{m+3k} \subseteq I_u^{j+2k}$, then by the assumption $k \ge j\geq m$, one has $I^{m+3k}_{v+\frac{\frac{p^3}{2^{2j}}}{2^{j-m}}} \subseteq 2I_u^{j+2k}$. Thus
\begin{eqnarray*}
&& \left|\Lambda_{j, m}^{\ge}(\vec{f})\right| \lesssim \sum_{\substack{k \ge j \\ r \in \Z}} \frac{1}{2^j} \sum_{s \sim 2^j} \frac{1}{\left|I_0^{j+k} \right|^{\frac{1}{2}}} \left| \left \langle f_1, \Phi_{P_{j+k}(r-p)} \right \rangle \right|  \cdot \Bigg [\sum_{I_u^{j+2k} \subseteq I_r^{j+k}} \frac{1}{\left|I_0^{j+2k} \right|^{\frac{1}{2}}} \left| \left \langle f_2, \Phi_{P_{j+2k}\left(u+\frac{p^2}{2^j} \right)} \right \rangle \right| \nonumber \\
&&  \quad \quad \quad \quad \quad \quad \quad \quad \cdot \left( \sum_{I_v^{m+3k} \subseteq 3 I_u^{j+2k}} \left| \left \langle f_3, \Phi_{P_{m+3k} \left(v \right)}  \right \rangle \right| \left| \left \langle f_4, \Phi_{P_{m+3k}(v-\frac{\frac{p^3}{2^{2j}}}{2^{j-m}})} \right \rangle \right| \right) \Bigg ] \\
&& \quad \approx\int_{\R} \sum_{\substack{k \ge j \\ r \in \Z}} \sum_{\substack{I_u^{j+2k} \subseteq I_r^{j+k} \\ I_v^{m+3k} \subseteq I_u^{j+2k}}} \left( \frac{1}{2^j} \sum_{s \sim 2^j} \frac{\left| \left \langle f_1, \Phi_{P_{j+k} \left(r-p \right)} \right \rangle \right|}{\left|I_r^{j+k} \right|^{\frac{1}{2}}} \frac{\left| \left \langle f_2, \Phi_{P_{j+2k} \left(u+\frac{p^2}{2^j} \right)} \right \rangle \right|}{\left|I_u^{j+2k} \right|^{\frac{1}{2}}} \frac{\left| \left \langle f_4, \Phi_{P_{m+3k} \left(v-\frac{\frac{p^3}{2^{2j}}}{2^{j-m}} \right)} \right \rangle \right|}{\left|I_v^{m+3k} \right|^{\frac{1}{2}}} \right) \\
&& \quad \quad \quad \cdot \frac{\left| \left \langle f_3, \Phi_{P_{m+3k}(v)} \right \rangle \right|}{\left|I_v^{m+3k} \right|^{\frac{1}{2}}} \one_{I_v^{m+3k}}(x)dx.
\end{eqnarray*}
By Cauchy-Schwarz, we see that
$$
\left|\Lambda_{j, m}^{\ge}(\vec{f})\right| \lesssim \int_{\R} \calT_{j, m}(f_1, f_2, f_4)(x) \calS f_3(x) dx,
$$
where
\begin{eqnarray*}
&& \calT_{j, m}^2(f_1, f_2, f_4)(x)\\
&& :=\sum_{\substack{I_u^{j+2k} \subseteq I_r^{j+k} \\ I_v^{m+3k} \subseteq I_u^{j+2k}}} \left( \frac{1}{2^j} \sum_{s \sim 2^j} \frac{\left| \left \langle f_1, \Phi_{P_{j+k} \left(r-p \right)} \right \rangle \right|}{\left|I_r^{j+k} \right|^{\frac{1}{2}}} \frac{\left| \left \langle f_2, \Phi_{P_{j+2k} \left(u+\frac{p^2}{2^j} \right)} \right \rangle \right|}{\left|I_u^{j+2k} \right|^{\frac{1}{2}}} \frac{\left| \left \langle f_4, \Phi_{P_{m+3k} \left(v-\frac{\frac{p^3}{2^{2j}}}{2^{j-m}} \right)} \right \rangle \right|}{\left|I_v^{m+3k} \right|^{\frac{1}{2}}} \right)^2\one_{I_v^{m+3k}}(x).
\end{eqnarray*}
Following now a similar argument with the one employed for the proof of Proposition \ref{20231227prop01}, we have
\begin{prop} 
Let $j, m \in \N$ with $j \ge m+100$ and $F_1, F_2, F_4$ be measurable sets with finite (nonzero) Lebesgue measure. Then 
\begin{center}
$\exists~F_4' \subseteq F_4$ measurable with $|F_4'| \ge \frac{1}{2} |F_4|$, 
\end{center}
such that for any triple of functions $f_1, f_2$ and $f_4$ obeying
$$
|f_1| \le \one_{F_1}, \quad |f_2| \le \one_{F_2}, \quad |f_4| \le \one_{F_4'}, 
$$
and any $1<p<\infty$, one has that
$$
\left\|\calT_{j, m}(f_1, f_2, f_4) \right\|_{L^p(\R)} \lesssim_p j^{10} \frac{|F_1|}{|F_4|} \frac{|F_2|}{|F_4|} |F_4|^{\frac{1}{p}}.
$$
\end{prop}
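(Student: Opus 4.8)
The plan is to run, essentially verbatim, the seven-step proof of Proposition~\ref{20231227prop01}, transporting every occurrence of the scales $m+k,\,m+2k$ to $j+k,\,j+2k$ (the innermost scale stays $m+3k$) and every ``polynomial loss in $m$'' into a ``polynomial loss in $j$''---which is precisely why the target carries $j^{10}$ in place of $m^{10}$. The structural observation is that $\calT_{j,m}$ is, combinatorially, the same object as the curved trilinear square function $\calT$ of \eqref{trilinsq}: an average over $\sim 2^{j}$ shifted copies of a product of a $P_{j+k}$-wave-packet coefficient of $f_1$, a $P_{j+2k}$-coefficient of $f_2$ and a $P_{m+3k}$-coefficient of $f_4$, summed over the nested tiles $I_v^{m+3k}\subseteq I_u^{j+2k}\subseteq I_r^{j+k}$ with $k\ge j$ (the standing hypothesis of Case IV in Section~\ref{20240219subsubsec01}). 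Since $k\ge j\ge m$, the scales $2^{-(j+k)}\ge 2^{-(j+2k)}\ge 2^{-(m+3k)}$ are correctly ordered and nestedness is preserved, so the entire machinery goes through.

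Concretely, I would first reparametrize the outermost index by $r=2^{j}\widetilde r+s$ with $s\sim 2^{j}$, using $I_{r-p}^{j+k}\subseteq I_{\widetilde r}^{k}$ (the scale ratio here is $2^{j}$, which is the source of the $j$-power loss), and run the $f_1$-level-set decomposition: the sets $S_{\widetilde r}^{l}(f_1)$, $0\le l\le j$, sorting $p$ according to the dyadic size of $\int_{I_{2^{j}\widetilde r-p}^{j+k}}|f_1|/|I_0^{j+k}|$ relative to $\int_{3I_{\widetilde r}^{k}}|f_1|/|I_{\widetilde r}^{k}|$, with $\#S_{\widetilde r}^{l}\lesssim 2^{j-l}$. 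Next I would construct the exceptional set $\Omega=\Omega_1\cup\Omega_2\cup\Omega_3$ and set $F_4':=F_4\setminus\Omega$: $\Omega_1$ from the Hardy--Littlewood maximal function of $\one_{F_1},\one_{F_2}$ at height a fixed multiple of $|F_i|/|F_4|$; $\Omega_2$ from the maximal dyadic intervals $I$ with $|F_2\cap I|/|F_4\cap I|$ too large relative to $|F_2|/|F_4|$; and $\Omega_3$ from the average-shifted maximal operator $\frac{1}{2^{j-l}}\sum_{p\in S_{\widetilde r}^{l}(f_1)}M^{((p+s)^2/2^{j})}$ applied to $\one_{F_2\cap I}$, whose weak $(1,1)$ norm is $O(j^{2})$ by the Stein--Weiss inequality together with \cite[Lemma~4.3]{GL20}; this yields $|\Omega|\le|F_4|/10$. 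Finally I would partition $\Omega^{c}$ via $\calI_{\beta,k}=\{I_{\widetilde r}^{k}:1+\textrm{dist}(I_{\widetilde r}^{k},\Omega^{c})/|I_{\widetilde r}^{k}|\simeq 2^{\beta}\}$ and reduce, as in the earlier proof, to proving $\|\frakT_{\beta,l}[f_1,f_2](f_4)\|_{L^p}\lesssim_p j^{6}\,2^{-100\beta}\|f_4\|_{L^p}$ for each $l\le j$, $\beta\ge 0$, $1<p<\infty$.

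For the principal piece $\frakT_{0,l}$ one has $\int_{3I_{\widetilde r}^{k}}|f_1|/|I_{\widetilde r}^{k}|\lesssim|F_1|/|F_4|$ on $\calI_{0,k}$; Cauchy--Schwarz in the $f_1$-average, followed by the $\Omega_2$ and $\Omega_3$ bounds, controls the inner $f_2$-sum by a $j^{3}$-multiple of $|F_2|/|F_4|$, and the residual $f_4$-square-function sum is $\lesssim j^{6}\|f_4\|_{L^2}^{2}$ by a Bessel bound (here one uses $k\ge j\ge m$ to pass from $I_v^{m+3k}\subseteq I_u^{j+2k}$ to an orthogonality sum over all $v$). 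The weak $(1,1)$ bound for $\frakT_{0,l}$ then follows from a Calder\'on--Zygmund decomposition of $f_4$: the good part via the $L^2$ estimate, and for the bad part $b=\sum_J b_J$ one invokes the wave-packet kernel bound of the form \eqref{20231231eq11} (now with $2^{m+3k}$, $k\ge j$), splitting the scales into $2^{m+3k}|J|\le 1$, the transitional range $1\le 2^{m+3k}|J|\lesssim 2^{m}$, and $2^{m+3k}|J|\gtrsim 2^{m}$, each regime contributing a polynomial-in-$j$ multiple of $\lambda|J|$ after the same kernel-decay computations. Real interpolation gives $1<p\le 2$, and duality plus Khintchine's inequality (randomizing over the Haar basis associated to the intervals $I_v^{m+3k}$, cf.\ \cite[Theorem~2.5]{MS13}) gives $2<p<\infty$. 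For $\frakT_{\beta,l}$ with $\beta\ge 1$, the extra factor $2^{\beta}$ lost in the $f_1$-average (the analogue of \eqref{20240103eq01}) is overcome by the rapid decay $\|\calS_{\calI_{\beta}}f_4\|_{L^2}\lesssim_N 2^{-100\beta N}\|f_4\|_{L^2}$ of the restricted square function; summing over $l\in\{0,\dots,j\}$ and $\beta\ge 0$ costs one further power of $j$, still absorbed into $j^{10}$.

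The one point that genuinely differs from Proposition~\ref{20231227prop01} is the bookkeeping of the two shift magnitudes. The $f_2$-shift $p^2/2^{j}\sim 2^{j}$ is commensurate with the averaging length $2^{j}$, so the average-shifted maximal estimates carry over unchanged; but the $f_4$-shift $p^3/2^{3j-m}\sim 2^{m}$ is genuinely smaller than $2^{j}$. The hard part will be to verify (i) that the containment $I_{v-p^3/2^{3j-m}}^{m+3k}\subseteq 2I_u^{j+2k}$ used to decouple the $f_4$-tile from its shift still holds---this is exactly where the Case IV hypothesis $k\ge j\,(\ge m)$ is used---and (ii) that in the bad-part estimate the number of scales $k$ for which the shifted kernel $|2^{m+3k}y-x+p^3/2^{3j-m}|^{-2}$ fails to be comparable to the unshifted one is only $O(\log 2^{m})=O(j)$, so that no more than a polynomial-in-$j$ loss is incurred. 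Once these two verifications are in place, the remainder is a routine transcription of the seven-step argument and yields the claimed bound with the stated $j^{10}$.
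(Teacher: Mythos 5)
Your proposal is correct and follows exactly the route the paper intends: the paper's own ``proof'' of this proposition is the single remark that one follows the argument of Proposition \ref{20231227prop01}, and your scale-by-scale transcription (with $m+k,m+2k\mapsto j+k,j+2k$, averaging over $p\sim 2^j$, and the polynomial loss now in $j$) is the faithful expansion of that remark. You also correctly isolate the only two points needing fresh verification --- the containment $I^{m+3k}_{v-p^3/2^{3j-m}}\subseteq 2I_u^{j+2k}$ under the Case IV hypothesis $k\ge j\ge m$, and the $O(m)\le O(j)$ count of transitional scales for the $f_4$-shift of size $\sim 2^m$ --- both of which check out.
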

Finally, we conclude that
$$
\left|\Lambda_{j, m}^{\ge} \left(\vec{f} \right)\right| \lesssim \left\|\calT(f_1, f_2, f_4) \right\|_{L^p(\R)} \left\|\calS f_3 \right\|_{L^{p'}(\R)} \lesssim_p j^{10} \frac{|F_1|}{|F_4} \frac{|F_2|}{|F_4|} |F_4|^{\frac{1}{p}} |F_3|^{1-\frac{1}{p}}, \quad \forall p \in (1, \infty). 
$$

\subsubsection{Treatment of the non-stationary off-diagonal case}\label{nonsthi}
Recall by \cite[Lemma 2.3]{HL23} that
\begin{eqnarray} \label{20231030eq01}
\left| \Lambda_{j, l, m}^k(\vec{f}) \right|%
&\lesssim& \frac{1}{2^n} \int_{\R \times \left\{\frac{1}{4} \le |t| \le 4 \right\}} \left| \left(\widehat{f_1} \left(\cdot \right) \phi \left( \frac{\cdot}{2^{j+k}} \right) \right)^{\vee}  \left(x-\frac{t}{2^k} \right)\right|\left| \left(\widehat{f_2} \left(\cdot \right) \phi \left( \frac{\cdot}{2^{l+2k}} \right) \right)^{\vee}  \left(x+\frac{t^2}{2^{2k}} \right)\right| \nonumber \\
&& \quad \cdot \left| \left(\widehat{f_3} \left(\cdot \right) \phi \left( \frac{\cdot}{2^{m+3k}} \right) \right)^{\vee}  \left(x+\frac{t^3}{2^{3k}} \right)\right| \left| \left(\widehat{f_4} \left(\cdot \right) \one_{A(k, j, l, m)} \left( \cdot \right) \right)^{\vee}  \left(x \right)\right|,
\end{eqnarray}
where $n:=\max\{j, l, m\}$ and $A(k, j, l , m):=\supp \ \phi \left( \frac{\cdot}{2^{j+k}} \right)+\supp \ \phi \left(\frac{\cdot}{2^{l+2k}} \right)+\supp \ \phi \left(\frac{\cdot}{2^{m+3k}} \right)$. 

Let $f_1, f_2, f_3$ and $f_4$ be defined as usual. Following the same spirit as in the treatment of the stationary diagonal term, see Sections \ref{MaindiaglargeK} and \ref{20231112sec01}, we deduce that for any $j, l, m \ge 0$ and $\vec{\theta}=(\theta_1, \theta_2, \theta_3)$ with $0<\theta_1, \theta_2, \theta_3<1$ with $\theta_1+\theta_2+\theta_3\leq 2$ we have
$$
\left| \sum_{k \ge 0}  \Lambda_{j, l, m}^k (\vec{f}) \right| \lesssim_{\vec{\theta}} \frac{n^3}{2^n} \cdot |F_1|^{\theta_1}|F_2|^{\theta_2} |F_3|^{\theta_3} |F_4|^{1-\theta_1-\theta_2-\theta_3}$$
which finally implies
\begin{eqnarray*}
\left| \Lambda_{k \ge 0}^{\not D, NS} (\vec{f}) \right|%
&:=& \left| \sum_{\substack{ j, l, m \ge 0 \\ \textrm{non-stationary off-diagonal}}} \sum_{k \ge 0}  \Lambda_{j, l, m}^k (\vec{f}) \right|\lesssim_{\vec{\theta}}   \left(\sum_{j, l, m \ge 0} \frac{n^3}{2^n}  \right) \cdot |F_1|^{\theta_1}|F_2|^{\theta_2} |F_3|^{\theta_3} |F_4|^{1-\theta_1-\theta_2-\theta_3}\\
&\lesssim&  |F_1|^{\theta_1}|F_2|^{\theta_2} |F_3|^{\theta_3} |F_4|^{1-\theta_1-\theta_2-\theta_3}\,..  
\end{eqnarray*}

\subsection{A brief overview of the high frequency component in the regime $k \le 0$.} \label{kneg}

In this section, we present a very brief outline of the regime $k$ negative. We will focus only on the main diagonal part as the modifications required for treating the off-diagonal case are relatively simpler (and following Section \ref{secoffdiag}).   

To begin with, we notice that the dominant frequency scale in this regime is $2^{m+k}$. Therefore
 $$
 \Lambda_m^k (\vec{f})=\int_{\R^2} \left(f_1* \check{\phi}_{m+k}\right)\left(x-t \right) \left(f_2* \check{\phi}_{m+2k}\right)\left(x+t^2 \right) \left(f_3*\check{\phi}_{m+3k}\right)\left(x+t^3 \right) \left(f_4*\check{\phi}_{m+k}\right)(x) \rho(t)dtdx, 
 $$
which, by applying the change variables $t^3 \to t$ first and then $2^{3k} t \to t$, becomes
$$
\int_{\R^2} \left(f_1*\check{\phi}_{m+k} \right) \left(x-\frac{t^{\frac{1}{3}}}{2^k} \right) \left(f_2*\check{\phi}_{m+2k} \right) \left(x+\frac{t^{\frac{2}{3}}}{2^{2k}} \right)\left(f_3*\check{\phi}_{m+3k} \right) \left(x+\frac{t}{2^{3k}} \right) \left(f_4*\check{\phi}_{m+k} \right)(x) \rho(t)dtdx. 
$$
Now we apply the discretization in $x$ first, which gives
\begin{eqnarray} \label{20231111eq01}
\Lambda_m^k(\vec{f})%
& \simeq& \sum_{z\in\Z} \frac{1}{2^{m+k}} \int_{\R} \left(f_1*\check{\phi}_{m+k} \right)\left(\frac{z}{2^{m+k}}-\frac{t^{\frac{1}{3}}}{2^k} \right) \left(f_2*\check{\phi}_{m+2k} \right) \left(\frac{z}{2^{m+k}}+\frac{t^{\frac{2}{3}}}{2^{2k}} \right) \nonumber \\
&& \quad \quad \left(f_3*\check{\phi}_{m+3k} \right) \left(\frac{z}{2^{m+k}}+\frac{t}{2^{3k}} \right) \left(f_4*\check{\phi}_{m+k} \right)\left(\frac{z}{2^{m+k}} \right) \rho(t)dt.
\end{eqnarray}
Again, we consider two different cases.

\subsubsection{The case $-k \ge m$}

We apply a discretization in $t$ to see that 
\begin{eqnarray} \label{20231112eq02}
&& \Lambda_m^{\le}(\vec{f}):= \frac{1}{2^m} \sum_{s \sim 2^m} \sum_{\substack{-k \ge m \\ z \in \Z}} \frac{1}{2^{k+m}} \left| \left(f_3*\check{\phi}_{m+3k} \right) \left(\frac{z}{2^{m+k}}+\frac{s}{2^{m+3k}} \right) \right| \left| \left(f_4*\check{\phi}_{m+k} \right) \left(\frac{z}{2^{m+3k}} \right) \right| \nonumber \\
&& \quad \quad \quad \quad \quad \quad \cdot \left| \left(f_1*\check{\phi}_{m+k} \right) \left(\frac{z}{2^{m+k}}-\frac{s^{\frac{1}{3}}}{2^{\frac{m}{3}+k}} \right) \right| \left| \left(f_2* \check{\phi}_{m+2k} \right) \left(\frac{z}{2^{m+k}}+\frac{s^{\frac{2}{3}}}{2^{\frac{2m}{3}+2k}} \right) \right| \nonumber \\
&&\quad = \sum_{\substack{-k \ge m \\ r \in \Z}} \frac{1}{2^m} \sum_{s \sim 2^m} \frac{1}{\left|I_r^{m+3k} \right|^{\frac{1}{2}}} \left| \left\langle f_3, \Phi_{P_{m+3k} \left(r+s \right)} \right \rangle \right| \Bigg [ \sum_{I_u^{m+2k} \subseteq I_r^{m+3k}} \frac{1}{\left|I_u^{m+2k} \right|^{\frac{1}{2}}} \left| \left \langle f_2, \Phi_{P_{m+2k} \left(u+2^{\frac{m}{3}} s^{\frac{2}{3}} \right)} \right \rangle \right| \nonumber   \\
&& \quad \quad \quad \quad \cdot \left( \sum_{I_v^{m+k} \subseteq I_u^{m+2k}} \left| \left \langle f_1, \Phi_{P_{m+k} \left(v-2^{\frac{2m}{3}}s^{\frac{1}{3}} \right)} \right \rangle \right| \left| \left \langle f_4, \Phi_{P_{m+k}(v)} \right \rangle \right| \right) \Bigg ]. 
\end{eqnarray}
Note that since $I_v^{m+k} \subseteq I_u^{m+2k}$, using the assumption that $-k \ge m$, we see that $I^{m+k}_{v-2^{\frac{2m}{3}}s^{\frac{1}{3}}} \subseteq 2I_u^{m+2k}$ and hence
\begin{eqnarray*}
&& \Lambda_m^{\le}(\vec{f})\approx\sum_{\substack{-k \ge m \\ r \in \Z}} \frac{1}{2^m} \sum_{s \sim 2^m} \frac{1}{\left|I_r^{m+3k} \right|^{\frac{1}{2}}} \left| \left\langle f_3, \Phi_{P_{m+3k} \left(r+s \right)} \right \rangle \right| \Bigg [ \sum_{I_u^{m+2k} \subseteq I_r^{m+3k}} \frac{1}{\left|I_u^{m+2k} \right|^{\frac{1}{2}}} \left| \left \langle f_2, \Phi_{P_{m+2k} \left(u+2^{\frac{m}{3}} s^{\frac{2}{3}} \right)} \right \rangle \right| \nonumber   \\
&& \quad \quad \quad \quad \cdot \left( \sum_{I_v^{m+k} \subseteq I_u^{m+2k}} \left| \left \langle f_1, \Phi_{P_{m+k} \left(v \right)} \right \rangle \right| \left| \left \langle f_4, \Phi_{P_{m+k}\left(v+2^{\frac{2m}{3}}s^{\frac{1}{3}}\right)} \right \rangle \right| \right) \Bigg ] \\
&& \quad =\int_{\R} \sum_{\substack{-k \ge m \\ r \in \Z}} \sum_{\substack{I_u^{m+2k} \subseteq I_r^{m+3k} \\ I_v^{m+k} \subseteq I_u^{m+2k}}} \left( \frac{1}{2^m} \sum_{s \sim 2^m} \frac{\left |\left \langle f_3, \Phi_{P_{m+3k} \left(r+s \right)} \right \rangle \right|}{\left|I_r^{m+3k} \right|^{\frac{1}{2}}}  \frac{ \left| \left \langle f_2, \Phi_{P_{m+2k} \left(u+2^{\frac{m}{3}} s^{\frac{2}{3}} \right)} \right \rangle \right|}{\left|I_u^{m+2k} \right|^{\frac{1}{2}}} \frac{\left| \left \langle f_4, \Phi_{P_{m+k}\left(v+2^{\frac{2m}{3}}s^{\frac{1}{3}}\right)} \right \rangle \right| }{\left|I_v^{m+k} \right|^{\frac{1}{2}}} \right) \\
&& \quad \quad \quad \quad \one_{I_v^{m+k}}(x) \cdot \frac{\left| \left \langle f_1, \Phi_{P_{m+k}(v)} \right \rangle \right|}{\left|I_v^{m+k} \right|^{\frac{1}{2}}} \one_{I_v^{m+k}}(x) dx \lesssim \int_{\R} \calT(f_3, f_2, f_4)(x) \calS f_1(x)dx, 
\end{eqnarray*}
where
$$\calT^2(f_3, f_2, f_4)(x):=$$ 
$$\sum_{\substack{-k \ge m \\ r \in \Z}} \sum_{\substack{I_u^{m+2k} \subseteq I_r^{m+3k} \\ I_v^{m+k} \subseteq I_u^{m+2k}}} \left( \frac{1}{2^m} \sum_{s \sim 2^m} \frac{\left |\left \langle f_3, \Phi_{P_{m+3k} \left(r+s \right)} \right \rangle \right|}{\left|I_r^{m+3k} \right|^{\frac{1}{2}}}  \frac{ \left| \left \langle f_2, \Phi_{P_{m+2k} \left(u+2^{\frac{m}{3}} s^{\frac{2}{3}} \right)} \right \rangle \right|}{\left|I_u^{m+2k} \right|^{\frac{1}{2}}} \frac{\left| \left \langle f_4, \Phi_{P_{m+k}\left(v+2^{\frac{2m}{3}}s^{\frac{1}{3}}\right)} \right \rangle \right| }{\left|I_v^{m+k} \right|^{\frac{1}{2}}} \right)^2 \one_{I_v^{m+k}}(x).$$ 

Following now the argument presented in Proposition \ref{20231227prop01}, one can show that for any $F_2, F_3, F_4$ measurable sets with finite (nonzero) Lebesgue measure, there exists some $F_4' \subseteq F_4$ measurable with $|F_4'| \ge \frac{1}{2}|F_4|$, such that for any triple of measurable functions $\{f_j\}_{j=2}^{4}$ obeying $|f_2| \le \one_{F_2}, |f_3| \le \one_{F_3}$ and $f_4 \le \one_{F_4'}$ and any $1<p<\infty$, one has 
$$
\left\|\calT(f_3, f_2, f_4) \right\|_{L^p(\R)} \lesssim_p m^{10} \frac{|F_3|}{|F_4|} \frac{|F_2|}{|F_4|} \left|F_4 \right|^{\frac{1}{p}}.
$$
Therefore, we have 
$$
\Lambda_m^{\le}(\vec{f}) \lesssim_p m^{10} \frac{|F_3|}{|F_4|}\frac{|F_2|}{|F_4|} \left|F_4 \right|^{\frac{1}{p}} |F_1|^{1-\frac{1}{p}}, \quad \forall p \in (1, \infty).
$$

\subsubsection{The case $0 \le -k \le m$} After applying a further parametrization of \eqref{20231112eq02} (for a single $k$), we see that it suffices to consider the term 
\begin{eqnarray*}
&& \Lambda_m^{k}(\vec{f}) \simeq \frac{1}{2^m} \sum_{s \sim 2^m} \sum_{r \in \Z} \sum_{a, b \sim 2^{-k}} \frac{1}{\left|I_r^{m+3k} \right|^{\frac{1}{2}}} \left| \left \langle f_3, \Phi_{P_{m+3k}(r+s) } \right \rangle \right| \cdot \frac{1}{\left| I_0^{m+2k} \right|^{\frac{1}{2}}} \left| \left \langle f_2, \Phi_{P_{m+2k} \left(2^{-k}r+a+2^{\frac{m}{3}}s^{\frac{2}{3}} \right)} \right \rangle \right| \\
&& \quad \quad \quad \quad \quad \quad \cdot \left| \left\langle f_1, \Phi_{P_{m+k} \left(2^{-2k}r+2^{-k}a+b-2^{\frac{2m}{3}}s^{\frac{1}{3}} \right)} \right \rangle \right|\left| \left\langle f_4, \Phi_{P_{m+k} \left(2^{-2k}r+2^{-k}a+b\right)} \right \rangle \right| \\
&& \lesssim \frac{1}{2^m} \cdot \frac{1}{2^{m+k}} \sum_{\substack{s \sim 2^m \\ r \in \Z}} \sum_{a, b \sim 2^{-k}} \cdot \frac{\int_{I_{r+s}^{m+3k}}|f_3|}{\left|I_0^{m+3k} \right|} \frac{\int_{I_{2^{-k}r+a+2^{\frac{m}{3}}s^{\frac{2}{3}}}^{m+2k}}|f_2|}{\left|I_0^{m+2k} \right|} \cdot \frac{\int_{I_{2^{-2k}r+2^{-k}a+b-2^{\frac{2m}{3}}s^{\frac{1}{3}}}^{m+k}}|f_1|}{\left|I_0^{m+k} \right|} \cdot \frac{\int_{I_{2^{-2k}r+2^{-k}a+b}^{m+k}}|f_4|}{\left|I_0^{m+k} \right|}.
\end{eqnarray*}
Following now a similar argument as in Section \ref{20231112sec01}, we see that 
$$
\Lambda_m^{k}(\vec{f}) \lesssim |F_1||F_3||F_4|^{-1},
$$
and therefore, by symmetry, we deduce that 
$$
\Lambda_m^{k}(\vec{f}) \lesssim |F_1|^{\theta_1}|F_2|^{\theta_2} |F_3|^{\theta_3}|F_4|^{-1}, 
$$
for $0 \le \theta_1, \theta_2, \theta_3 \le 1$ with $\theta_1+\theta_2+\theta_3=2$.

\medskip 

\subsection{Treatment of the low oscillatory component} \label{20231112sec01}
Recall that 
$$
\Lambda_{j, l, m}^k(\vec{f}):= \iint_{\R^2} \left(f_1*\check{\phi}_{j+k} \right)(x-t) \left(f_2*\check{\phi}_{l+2k} \right)(x+t^2)\left(f_3*\check{\phi}_{m+3k} \right)(x+t^3)f_4(x) 2^k \rho(2^k t)dtdx,
$$
and that the \emph{low oscillatory component} is given by
$$
\Lambda^{Lo}(\vec{f}):=\sum_{k \in \Z} \sum_{(j, l, m) \in \Z^3 \backslash \N^3}\Lambda_{j, l, m}^k(\vec{f}). 
$$
In this section, we employ the strategy presented in \cite[Section 10]{HL23} in order to prove the following

\begin{prop} \label{Mainproplow} 
Let $\{F_j\}_{j=1}^{4}$ be any real measurable sets of finite measure. Then, there exists $F_4'\subseteq F_4$ major subset such that for any $|f_j|\leq \chi_{F_j}$ with $1\leq j\leq 3$ and $|f_4|\leq \chi_{F'_4}$ and any $\vec{\theta}=(\theta_1, \theta_2, \theta_3)$ with $0<\theta_j<1$ the following holds:
\begin{equation}\label{diaglargeboundlo}
\left|\Lambda^{Lo}(\vec{f})\right|\lesssim_{\vec{\theta}} |F_1|^{\theta_1}\,|F_2|^{\theta_2}\,|F_3|^{\theta_3}\,|F_4|^{1-\theta_1-\theta_2-\theta_3}\,.
\end{equation}
\end{prop}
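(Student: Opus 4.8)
The plan is to follow the blueprint already laid out for the low oscillatory component in \cite[Section 10]{HL23} and adapt it to the restricted-weak-type formulation required here. First I would reduce matters to a single triple $(j,l,m)\in\Z^3\setminus\N^3$ and prove a bound of the form
\begin{equation*}
\Big|\sum_{k\in\Z}\Lambda_{j,l,m}^k(\vec f)\Big|\lesssim_{\vec\theta} 2^{-\delta\,\mathrm{neg}(j,l,m)}\,|F_1|^{\theta_1}|F_2|^{\theta_2}|F_3|^{\theta_3}|F_4|^{1-\theta_1-\theta_2-\theta_3},
\end{equation*}
where $\mathrm{neg}(j,l,m):=\max\{-j,-l,-m,0\}$ records how far the triple sits outside $\N^3$, and $\delta>0$ is a fixed small gain. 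Since at least one of $j,l,m$ is negative, summing the geometric factor $2^{-\delta\,\mathrm{neg}(j,l,m)}$ over all of $\Z^3\setminus\N^3$ converges, and this immediately yields \eqref{diaglargeboundlo}. The key structural point, exactly as in \cite{HL23}, is that a negative frequency index forces a genuine \emph{smoothing/cancellation} gain: if, say, $j<0$, then $f_1*\check\phi_{j+k}$ lives at frequency $\sim 2^{j+k}\ll 2^k$, so on the physical scale $|t|\sim 2^{-k}$ dictated by $\rho_k$ the first factor is essentially constant, the $dt/t$ integral sees no oscillation from the $t$-variable against $f_1$, and one extracts a factor $2^j$ (or its analogue) from integrating the kernel against the smooth bump—this is precisely the mechanism used for $\Lambda^{\not D, NS}$ in \cite[Lemma 2.3]{HL23} and recalled in item (b) of the Preliminaries.

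Concretely, I would split $\Z^3\setminus\N^3$ according to which index (or indices) is negative and, within each piece, run the same single-scale wave-packet discretization as in Subsections \ref{MaindiaglargeK}, \ref{20231112sec01} and \ref{secoffdiag}: pass from $\Lambda_m^k$ to its discrete trilinear-form representation in terms of inner products $\langle f_i,\Phi_{P}\rangle$, peel off the bounded factor (the one of highest Lebesgue exponent, typically $f_2$ or $f_4$), introduce the exceptional set $\Omega$ built from the maximal functions $M\one_{F_i}$ so that $F_4':=F_4\setminus\Omega$ is a major subset with the bound $\int_{3I}|f_i|/|I|\lesssim |F_i|/|F_4|$ on the surviving intervals, and then carry out the restricted-weak-type counting just as in \textsf{Case II} of Subsection \ref{20231112sec01}. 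The only new ingredient relative to the diagonal analysis is the harvesting of the extra factor $2^{-\delta\,\mathrm{neg}(j,l,m)}$; this is produced either by a direct integration-by-parts/non-stationary-phase estimate on the multiplier (when the negative index makes the phase non-stationary, which is automatic once one index is far smaller than the others), or by the trivial observation that a frequency projection to a band of length $2^{j+k}$ composed with the kernel of $L^1$ norm $O(1)$ at scale $2^{-k}$ gains $\min\{1,2^{j}\}$ in operator norm. Interpolating the resulting single-scale, single-triple restricted-weak-type bounds over $\vec\theta$ then gives the claimed estimate, and summing in $(j,l,m)$ finishes the proof.

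The main obstacle I anticipate is \emph{not} any single estimate but the bookkeeping needed to make the gain uniform across the several sub-regimes of $\Z^3\setminus\N^3$: when only one of $j,l,m$ is negative the curve still has genuine curvature and the argument is closest to the diagonal/off-diagonal cases already treated, but when two or all three indices are negative one is in a near-flat regime and must be careful that the cancellation in $dt/t$ is still available (it is, because $\rho$ is \emph{odd}, so the zero-curvature contribution is itself an honest Calder\'on–Zygmund-type object with its own decay), and that the change-of-variables gymnastics used for $k<0$ in Subsection \ref{kneg} interacts correctly with the negative frequency indices. Provided one organizes the cases by the sign pattern of $(j,l,m)$ and always extracts the decay from the smallest index, each case reduces to a routine variant of arguments already in the paper, and no genuinely new difficulty arises—consistent with the remark in Observation \ref{max range} that all components other than $\Lambda_{\pm}^{D,\le}$ enjoy the full conjectural range. \hfill\qedsymbol
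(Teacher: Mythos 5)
Your proposal has two genuine gaps. First, the summability bookkeeping fails: you propose to extract a factor $2^{-\delta\,\mathrm{neg}(j,l,m)}$ with $\mathrm{neg}(j,l,m)=\max\{-j,-l,-m,0\}$ and sum it over $\Z^3\setminus\N^3$, but this factor does not decay in the \emph{positive} indices. For example, fixing $j=-1$ and letting $l,m\to+\infty$ inside $\Z^3\setminus\N^3$, your gain is the constant $2^{-\delta}$ and the triple sum diverges. To close the argument you would also need decay in $\max\{j,l,m\}$, which in the stationary sub-regimes (see below) is not obtainable from non-stationary phase and is exactly what forces a multi-scale treatment.

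Second, and more seriously, your assertion that a negative index ``makes the phase non-stationary'' and that each case ``reduces to a routine variant of arguments already in the paper'' is false in the mixed-sign regimes. Take $m<0$ with $j\cong l$ large and positive: the phase $-\tfrac{\xi}{2^k}t+\tfrac{\eta}{2^{2k}}t^2$ is genuinely stationary (this is precisely one of the stationary configurations listed in the Preliminaries), and the low-frequency factor $f_3$ does nothing to remove it. After summing in $k$ the $(f_1,f_2)$ part is the stationary component $B_{l_3}(f_1,f_2)$ of the curved \emph{bilinear} Hilbert transform along $(-t,t^2)$, and the paper must invoke its $L^{p_1}\times L^{p_2}\to L^r$ boundedness for $r>1/2$ from \cite{Lie15}, \cite{LX16}, \cite{GL22} as a black box, building the exceptional set from level sets of $B_{l_3}$. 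Likewise, in the all-negative case the paper Taylor-expands the low-pass pieces (the oddness of $\rho$ kills the constant term, leaving monomial weights $t^{l_1+2l_2+3l_3}$ with $l_1+l_3$ odd), and the residual case $(l_1,l_2,l_3)=(a,0,0)$ with $k\ge0$ requires the maximal anisotropic paraproduct $\calP_{\frakM}$ and its known quasi-Banach bounds — again a multi-scale theorem, not a single-scale counting argument. Your ``essentially constant, gain $\min\{1,2^j\}$'' heuristic cannot produce either of these inputs. The correct organization (which the paper follows) is by sign pattern of $(j,l,m)$, lumping the entire negative cone into a single low-pass projection rather than seeking per-triple geometric decay, and importing the bilinear Hilbert transform and paraproduct bounds where the residual operator is irreducibly multi-scale.
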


The proof of this result is split into three cases:

\subsubsection{Case 1: All the three indices $j, l, m$ are negative.} \label{20231116subsec30}
Recall that by \cite[(10.2)]{HL23}, it suffices to consider the expression
\begin{eqnarray} \label{20231116eq40}
&& \Lambda_{-, -, -}(\vec{f})= \sum_{l_1, l_2, l_3 \ge 0} \Lambda_{-, -, -}^{l_1, l_2, l_3}(\vec{f}) \nonumber \\
&&\quad := \sum_{\substack{l_1, l_2, l_3 \ge 0 \\ l_1+l_3 \ \textrm{odd}}} \frac{C_{l_1, l_2, l_3}}{l_1!l_2!l_3!} \sum_{k \in \Z} \int_{\R} f_{1, l_1, k}(x)f_{2, l_2, 2k}(x)f_{3, l_3, 3k}(x)f_4(x) \left(\int_{\R} t^{l_1+2l_2+3l_3}\rho(t)dt \right) dx,
\end{eqnarray}
where $f_{i, l_i, ik}:=f_i* \left(\Psi_{i}^{l_i} \right)_{ik}$ with  $\Psi_{i}^{l_i}:=\left( \left(\cdot \right)^{l_i}\phi_{-}(\cdot) \right)^{\vee}$, $\Psi_k(x):=2^k \Psi(2^k x)$ and $\phi_{-}$ an even smooth function with $\supp \ \phi_{-} \subseteq \left[-5, 5 \right]$. 
\begin{proof}

We further consider two sub-cases.

\medskip 

\noindent\fbox{\textsf{Case 1.1: $(l_1, l_2, l_3) \neq (a, 0, 0), (0, 0, b), \ a, b \in \Z_+$}.} In this case, at least two of the $l$'s in $\{l_1, l_2, l_3\}$ take positive values. Without loss of generality, we assume\footnote{If $l_2, l_3>0$, then one can modify the proof accordingly by interchanging the role of $l_1$ and $l_3$.} $l_1, l_2>0$, and define the exceptional set $\Omega$ by 
\begin{equation} \label{20231105eq01}
\Omega:=\left\{\calS \one_{F_1} \ge 100\frac{\left|F_1\right|}{\left|F_4\right|} \right\} \cup \left\{\calS \one_{F_2} \ge 100\frac{\left|F_2\right|}{\left|F_4\right|} \right\} \cup \left\{M\one_{F_3} \ge 100 \frac{|F_3|}{|F_4|} \right\}
\end{equation} 
and the function $f_4=\one_{F_4'}$ with $F_4':=F_4 \backslash \Omega$ which, for $C>0$ suitable chosen, implies  $\frac{|F_4|}{2} \le |F_4'| \le |F_4|$. Now, from the argument in \cite[Section 10.1]{HL23}, we have 
\begin{eqnarray} \label{20231105eq02}
 \left| \sum_{\substack{l_1, l_2, l_3 \ge 0 \\ (l_1, l_2, l_3) \neq (1, 0, 0), (0, 0, 1)}}  \Lambda_{-, -, -}^{l_1, l_2, l_3} (\vec{f}) \right| %
 &\lesssim&  \sum_{l_1, l_2, l_3} \frac{C_{l_1, l_2, l_3}}{l_1!l_2!l_3!} \int_{\R} \calS f_1(x) \calS f_2(x) Mf_3(x) |f_4(x)| dx \nonumber \\
&\lesssim& \int_{F'_4
} \calS f_1(x) \calS f_2(x) Mf_3(x) dx.
\end{eqnarray}
Now, using the definition of the exception set $\Omega$, from \eqref{20231105eq01} we deduce
$$
\eqref{20231105eq02} \lesssim \frac{|F_1||F_2||F_3|}{|F_4|^3} \cdot |F_4'| \lesssim |F_1||F_2||F_3||F_4|^{-2}. 
$$

\medskip 

\noindent\fbox{\textsf{Case 1.2: $(l_1, l_2, l_3)=(a, 0, 0) \ \textrm{or} \ (0, 0, b), \ a, b \in \Z_+$}.} Due to the symmetry,  it is enough to consider the case when $(l_1, l_2, l_3)=(1, 0, 0)$. In this situation, we have the natural decomposition 
$$
\left|\Lambda_{-, -, -}^{1, 0, 0}(\vec{f}) \right| \le \left|\Lambda_{-, -, -; +}^{1, 0, 0}(\vec{f}) \right|+\left|\Lambda_{-, -, -; -}^{1, 0, 0}(\vec{f}) \right|,
$$
where in the right-hand side above the first term refers to the summation $\sum_{k \in \N}$, while the second term corresponds to the summation $\sum_{k \in \Z_{-}}$. 

\medskip 

\noindent\underline{\textsf{Treatment of $\Lambda_{-, -, -; -}^{1, 0, 0}(\vec{f})$.}} In this case, we define
\begin{equation} \label{20231105eq03}
\Omega:=\left\{\calS \one_{F_1} \ge 100 \frac{|F_1|}{|F_4|} \right\} \cup  \left\{M\one_{F_2} \ge 100\frac{\left|F_2\right|}{\left|F_4\right|} \right\} \cup \left\{M\one_{F_3} \ge 100 \frac{|F_3|}{|F_4|} \right\}\,,
\end{equation} 
and notice that
\begin{equation} \label{20231105eq04}
\left|\Lambda_{-, -, -; -}^{1, 0, 0}(\vec{f}) \right| \lesssim \int_{\R} \calS f_1(x) Mf_2(x)Mf_3(x) \calS f_4(x) dx.
\end{equation}
Using by now standard reasonings in the vein of the discussion in Section \ref{MaindiaglargeK} with special stress on the type of reasonings in \textsf{Step VII} (see in particular \eqref{20231027eq01*}--\eqref{20231027eq02*}) one gets 
$$
\eqref{20231105eq04}  \lesssim |F_1|^{\frac{2+\theta}{3}}|F_2|^{\frac{2+\theta}{3}}|F_3|^{\frac{2+\theta}{3}}|F_4|^{-1-\theta}\,,\qquad  \forall \:\theta \in (0, 1).
$$

\medskip 

\noindent\underline{\textsf{Treatment of $\Lambda_{-, -, -; +}^{1, 0, 0}(\vec{f})$.}} In this case, for any $\epsilon>0$ sufficiently small and $C>0$ large enough we set
\begin{eqnarray} \label{20231105eq05}
\Omega_\epsilon%
&:=& \left\{\calP_{\frakM}(f_1, f_2) \ge C \frac{|F_1|^{\frac{1}{1+\epsilon}}}{|F_4|^{\frac{1}{1+\epsilon}}} \right\} \cup \left\{\calP_{\frakM}(f_1, f_2) \ge C \frac{|F_2|^{\frac{1}{1+\epsilon}}}{|F_4|^{\frac{1}{1+\epsilon}}} \right\} \nonumber \\
&& \cup \left\{\calP_{\frakM}(f_1, f_2) \ge C\frac{|F_1|^{\frac{1}{1+\epsilon}}|F_2|^{\frac{1}{1+\epsilon}}}{|F_4|^{\frac{2}{1+\epsilon}}} \right\} \cup \left\{\calS \one_{F_3} \ge C \frac{|F_3|}{|F_4|} \right\},
\end{eqnarray}
where, for $\psi \in C_0^{\infty}(\R)$ with $\supp \ \psi \subseteq \left\{|\xi|<1 \right\}$, the expression
$$
\calP_{\frakM}(f_1, f_2):=\sup_{N \in \N} \left| \sum_{k=0}^N \left(f_1*\check{\psi}_k \right)(x) \left(f_2* \check{\phi}_{2k} \right) (x) \right|
$$
is the \emph{maximal (anisotropic) paraproduct operator}. It is known that $\calP_{\frakM}: L^{p_1}(\R) \times L^{p_2}(\R) \mapsto L^r(\R)$ for any $\frac{1}{p_1}+\frac{1}{p_2}=\frac{1}{r}$ with $1<p_1, p_2 \le \infty$ and $\frac{1}{2}<r<\infty$ (see, e.g., \cite[Section 10.1]{HL23}). As usual, we let $f_4:=\one_{F_4'}$ where $F_4':=F_4 \backslash \Omega_\epsilon$ with $\frac{F_4}{2} \le |F_4'| \le |F_4|$. 

Recall \cite[Section 10.1]{HL23} we notice that in this case it is sufficient to control the term 
\begin{equation} \label{20231105eq06}
\int_{\R} \calP_{\frakM}(f_1, f_2)(x) \calS f_3(x) \calS f_4(x) dx.
\end{equation} 
Again, applying a level set decomposition in the spirit of  Section \ref{MaindiaglargeK}  one concludes
\begin{equation}\label{concl}
\eqref{20231105eq06} \lesssim_{\ep,\theta} \frac{|F_1|^{\frac{1}{1+\epsilon}}|F_2|^{\frac{1}{1+\epsilon}}}{|F_4|^{\frac{2}{1+\epsilon}}} |F_3|^{\theta} |F'_4|^{1-\theta} \lesssim |F_1|^{\frac{1}{1+\epsilon}}|F_2|^{\frac{1}{1+\epsilon}} |F_3|^{\theta} |F_4|^{1-\theta-\frac{2}{1+\epsilon}}, \quad \forall \epsilon, \theta \in (0, 1). 
\end{equation}

\subsubsection{Case 2: Two of the indices are negative and one is positive} \label{20231116subsec02a}
Without loss of generality, we may assume that $j, l<0$ and $m \ge 0$ as the other two cases can be treated similarly. With this, we have
$$
\Lambda_{-, -, +}(\vec{f}) \simeq \sum_{m \ge 0} \frac{1}{2^m} \sum_{\substack{l_1, l_2\geq 0\\(l_1,l_2)\not=(0,0)}} \sum_{k \ge \Z} \int_{\R} f_{1, l_1, k}(x) f_{2, l_2, 2k} (x) \left(f_3*\check{\phi}_{m+3k} \right)(x) f_4(x) dx. 
$$
Since we must have $(l_1,l_2)\not=(0,0)$ it is enough to consider only two cases:

\medskip 

\textit{$\bullet$ If $l_1>0$ and $l_2 > 0$}, then by \cite[Section 10.2]{HL23}, one has
$$
\left|\Lambda_{-, -, +}(\vec{f}) \right| \lesssim \sum_{m \ge 0} \:\sum_{l_1, l_2 > 0} \frac{1}{2^m}  \frac{\left|C_{l_1, l_2} \right|}{l_1! l_2!} \int_{\R} \calS f_1(x) Mf_2(x) \calS f_3(x) |f_4(x)| dx. 
$$
and one can follow a similar argument with that in \textsf{Case 1.1} of Section \ref{20231116subsec30} above.

\medskip

\textit{$\bullet$ If $l_1=0$ or $l_2=0$} then one can mimic the arguments provided in \textsf{Case 1.2} of Section \ref{20231116subsec30}.

\subsubsection{Case 3: One index is negative and two are positive} \label{20231116subsec02}

Again, without loss of generality, we may assume $j, l \ge 0$ and $m<0$ as the other two cases can be treated similarly. Recall from \cite[Section 10.3]{HL23} that in this case, it suffices to estimate the term
$$
\sum_{l_3 \ge 0} \frac{|C_{l_3|}}{l_3!} \int_{\R} \left|B_{l_3}(f_1, f_2)(x) \right| Mf_3(x) |f_4(x)| dx, 
$$
where, for $\rho_{l_3}(t):=t^{3l_3} \rho(t)$, we let 
$$
B_{l_3}(f_1, f_2)(x):=\sum_{\substack{j, k \ge 0 \\ k \in \Z}} \int_{\R^2} \hat{f_1}(\xi) \hat{f_2}(\eta) \phi \left(\frac{\xi}{2^{j+k}} \right) \phi \left(\frac{\eta}{2^{l+2k}} \right) \left(\int_{\R} e^{i \left(-\frac{\xi}{2^k}t+\frac{\eta}{2^{2k}} t^2 \right)} \rho_{l_3}(t) dt \right) e^{ix(\xi+\eta)} d\xi d\eta, 
$$
be the stationary component of the \emph{curved bilinear Hilbert transform along the curve} $(-t, \gamma(t))$ as defined in \cite{Lie15} where in the present context\footnote{In the other two cases, $\gamma(t)$ is given by $t^3$ and $t^{\frac{3}{2}}$, respectively.} $\gamma(t)=t^2$. Now, based on \cite{GL22} (see also \cite{LX16}) we know that $B_{l_3}: L^{p_1}(\R) \times L^{p_2}(\R) \to L^r(\R)$ with $\frac{1}{p_1}+\frac{1}{p_2}=\frac{1}{r}$ where $r>\frac{1}{2}$ and $p_1, p_2>1$. Therefore, for any $\epsilon>0$ and $C>0$ absolute constant large enough, we can define the exceptional set $\Omega_{l_3; \epsilon}$ to be
\begin{eqnarray*}
\Omega_{l_3; \epsilon}%
&:=& \left\{B_{l_3}(f_1, f_2) \ge C \frac{|F_1|^{\frac{1}{1+\epsilon}}}{|F_4|^{\frac{1}{1+\epsilon}}} \right\} \cup \left\{B_{l_3}(f_1, f_2) \ge C \frac{|F_2|^{\frac{1}{1+\epsilon}}}{|F_4|^{\frac{1}{1+\epsilon}}} \right\} \nonumber \\
&& \cup \left\{B_{l_3}(f_1, f_2) \ge C\frac{|F_1|^{\frac{1}{1+\epsilon}}|F_2|^{\frac{1}{1+\epsilon}}}{|F_4|^{\frac{2}{1+\epsilon}}} \right\} \cup \left\{\calS \one_{F_3} \ge C \frac{|F_3|}{|F_4|} \right\}.
\end{eqnarray*}
The rest of the reasoning follows from the arguments in \textsf{Case 1.2} of Section \ref{20231116subsec30} in order to conclude that the same bound as in \eqref{concl} holds thus finishing the proof of Proposition \ref{Mainproplow}. 
\end{proof}

\subsection{The extended boundedness range: the quasi-Banach output}
In this section we provide an outline of how to put together all the restricted weak-type estimates we got so far in order to obtain the desired quasi-Banach range bounds for the quadrilinear form $\Lambda$ associated with the curved tri-linear Hilbert transform. Recall now our decomposition
$$
\Lambda=\Lambda^{Hi}+\Lambda^{Lo}=\Lambda_+^{Hi}+\Lambda_{-}^{Hi}+\Lambda^{Lo}. 
$$

\subsubsection{The high oscillatory component $\Lambda_+^{Hi}$}
The main expression we want to control is given by
$$
\Lambda_{j, l, m}^k(\vec{f})=\int_{\R^2} \left(f_1* \check{\phi}_{j+k}\right)\left(x-\frac{t}{2^k} \right) \left(f_2* \check{\phi}_{l+2k}\right)\left(x+\frac{t^2}{2^{2k}} \right) \left(f_3*\check{\phi}_{m+3k}\right)\left(x+\frac{t^3}{2^{3k}} \right) f_4(x) \rho(t)dtdx,
$$
where $\supp \ \widehat{f_4} \subseteq A(k, j, l, m):=\left\{\xi \sim 2^{j+k} \right\}+\left\{\eta \sim 2^{l+2k} \right\}+ \left\{\tau \sim 2^{m+3k} \right\}$.

In what follows, we always assume that 
\begin{enumerate}
    \item [$\bullet$] $F_j, j=1, 2, 3, 4$ are any arbitrary finitely measurable sets in $\R$;
    \item [$\bullet$] $f_j=\one_{F_j}, j=1, 2, 3$ and $f_4=\one_{F_4'}$ where $F_4' \subseteq F_4$ measurable with $\frac{F_4}{2} \le |F_4'| \le |F_4|$. 
\end{enumerate}

Putting together the restricted weak-type estimates for the $\Lambda_+^{Hi}$ components from Sections \ref{MaindiaglargeK}---\ref{secoffdiag}, we have
\begin{enumerate}
\item [(1).] for any $k \in \N$
and $0< \theta_1, \theta_2, \theta_3<1$ with $\theta_1+\theta_2+\theta_3\leq 2$, one has
\begin{equation} \label{20231112eq20}
\left|\Lambda_{j, l, m}^k \left(\vec{f} \right) \right| \lesssim_{\vec{\theta}} |F_1|^{\theta_1}|F_2|^{\theta_2}|F_3|^{\theta_3}|F_4|^{1-\theta_1-\theta_2-\theta_3}\,;
\end{equation} 

\item [(2).] for any $0< \theta_1, \theta_2, \theta_3<1$ one has
\begin{equation} \label{20231112eq301}
\left| \sum_{k \ge \max\{j, l, m\}} \Lambda_{j, l, m}^k \left(\vec{f} \right) \right| \lesssim_{\vec{\theta}}  \left(\max\{j, l, m\} \right)^{10} |F_1|^{\theta_1}|F_2|^{\theta_2}|F_3|^{\theta_3}|F_4|^{1-\theta_1-\theta_2-\theta_3}\,. 
\end{equation} 
\end{enumerate}

With these done, the desired control on $\Lambda_+^{Hi}$ is obtained as follows:

\begin{itemize}
\item firstly, one applies multi-linear interpolation between \eqref{20231112eq20} and the key local $L^2$ estimate in \cite[Section 12.1]{HL23} that reads \begin{equation} \label{20230323eq01}
\left| \Lambda_{j, l, m}^k (\vec{f}) \right| \lesssim_{\ep} 2^{-\epsilon \max \left\{  \min \left\{ 2|k|, \max\{j, l, m\} \right\}, \max \left\{|j-l|, |l-m|, |j-m| \right\} \right\}} \|\vec{f}\|_{L^{\vec{p}}} \quad \forall\,\vec{p} \in {\bf H}^{+}\,,
\end{equation}
where here ${\bf H}^{+}=\left\{(p_1,p_2,p_3,p_4)\,\Big|\,\sum_{j=1}^4\frac{1}{p_j}=1\:\:\textrm{and}\:\:p_1=2\:\:\&\:\:(p_2,p_3,p_4)\in\{2,\infty\}\right\}$ and $\ep>0$ is a properly chosen small constant.

\item secondly,  one applies multi-linear interpolation between \eqref{20231112eq301} and the key local $L^2$ estimate in \cite[Section 12.1]{HL23} given by 
\begin{equation} \label{20230323eq02}
\left| \sum_{k \ge \max\{j, l, m\}} \Lambda_{j, l, m}^k \left(\vec{f} \right) \right|  \lesssim_{\ep} 2^{-\epsilon \max\{j, l, m\}} \|\vec{f}\|_{L^{\vec{p}}} \quad \forall\,\vec{p} \in {\bf H}^{+}\,.
\end{equation}

\item finally, one follows similar reasonings with the ones in  \cite[Section 12.1]{HL23} (more precisely, the decomposition (12.4)--(12.6) therein) in order to conclude that 
\begin{equation} \label{20231112eq21}
\Lambda_{+}^{Hi}: L^{p_1}(\R) \times L^{p_2}(\R) \times L^{p_3}(\R) \longrightarrow L^{r, \infty}(\R),
\end{equation} 
with $\frac{1}{p_1}+\frac{1}{p_2}+\frac{1}{p_3}=\frac{1}{r}$, where $1<p_1, p_2, p_3<\infty$ and $\frac{1}{2}<r<\infty$. 
\end{itemize}

\subsubsection{The high oscillatory component $\Lambda_-^{Hi}$}

Recall that the only difference appearing in this case, is that the roles of $F_1$ and $F_3$ are interchanged. Consequently, the main estimates \eqref{20231112eq20} and \eqref{20231112eq301} (with the obvious adaptations) remain valid and thus the analogue of the main conclusion \eqref{20231112eq21} holds. 

\subsubsection{The low oscillatory component $\Lambda^{Lo}$}
Recalling Proposition \ref{Mainproplow} in Section \ref{20231112sec01} and applying multilinear interpolation we immediately deduce that
$$
\Lambda_{Lo}: L^{p_1}(\R) \times L^{p_2}(\R) \times L^{p_3}(\R) \longmapsto L^{r, \infty}(\R), 
$$
with $\frac{1}{p_1}+\frac{1}{p_2}+\frac{1}{p_3}=\frac{1}{r}$, where $1<p_1, p_2, p_3<\infty$ and $\frac{1}{3}<r<\infty$. 

\section{The curved $n$-sublinear maximal operator}

In this section we provide the short proof of Theorem \ref{mainthm03}. The key difference between the curved multi-linear Hilbert transform and the curved $n$-sublinear maximal operator is that in the latter case, we have a positive kernel and thus, in particular, $\calM_{n,\vec{\al},\vec{\be}} (\vec{f})$ features the key \emph{monotonicity property}:
\begin{equation} \label{monot}
\textrm{if}\:\:\:|f_j|\leq |g_j|\:\quad\:\forall\:\:1\leq j\leq n\:\quad\textrm{then}\quad \calM_{n,\vec{\al},\vec{\be}} (\vec{f})\leq \calM_{n,\vec{\al},\vec{\be}} (\vec{g})\,.
\end{equation} 
As a consequence, the proof of Theorem \ref{mainthm03} can be easily recovered from the known $L^p$-bounds satisfied by the curved bilinear maximal operator. 
\medskip

\noindent\textit{Proof of Theorem \ref{mainthm03}.}
For expository reasons and notational simplicity we take $\vec{\be}=\vec{\one}:=(1,\ldots,1)$ and set $\calM_{n,\vec{\al},\vec{\one}}= \calM_{n,\vec{\al}}$. With these we immediately notice that
\begin{equation} \label{20231118eq02}
\calM_{n,\vec{\al}}(\vec{f})(x) \simeq \widetilde{\calM}_{n,\vec{\al}}(\vec{f})(x), \quad a.e. \ x \in \R, 
\end{equation} 
where 
$$
\widetilde{\calM}_{n,\vec{\al}}(\vec{f})(x):=\sup_{\epsilon>0} \frac{1}{2\epsilon} \int_{\frac{\epsilon}{2}<|t|<\epsilon}  \left|f_1\left(x+t^{\alpha_1}\right)f_2\left(x+t^{\alpha_2}\right) \cdots f_n \left(x+t^{\alpha_n}\right) \right| dt, \quad x \in \R.
$$
Indeed, we trivially have $\widetilde{\calM}_{n,\vec{\al}} (\vec{f})(x) \lesssim \calM_{n,\vec{\al}}(\vec{f})(x)$ while for the reverse direction, fixing an $\epsilon>0$, we have
\begin{eqnarray*}
&&  \frac{1}{2\epsilon} \int_{-\epsilon}^{\epsilon}  \left|f_1\left(x+t^{\alpha_1}\right)f_2\left(x+t^{\alpha_2}\right) \cdots f_n \left(x+t^{\alpha_n}\right) \right| dt \\
&& \le \sum_{j=0}^\infty \frac{1}{2^{j}} \cdot \frac{2^j}{2\epsilon} \int_{\frac{\epsilon}{2^{j+1}}<|t|<\frac{\epsilon}{2^j}} \left|f_1\left(x+t^{\alpha_1}\right)f_2\left(x+t^{\alpha_2}\right) \cdots f_n \left(x+t^{\alpha_n}\right) \right| dt \\
&& \le \left(\sum_{j=0}^\infty \frac{1}{2^{j+1}} \right) \cdot \widetilde{\calM}_{n,\vec{\al}}(\vec{f})(x) \lesssim \widetilde{\calM}_{n,\vec{\al}}(\vec{f})(x).
\end{eqnarray*}

Next,  we prove Theorem \ref{CnLMO} under the assumption that precisely two $p_i$'s are different from $\infty$: without loss of generality, we consider the case
$$
1<p_1, p_2 \le \infty \quad  \textrm{and} \quad  p_3= \dots p_n=\infty. 
$$
In this situation though, based on \eqref{monot}, it is sufficient to estimate the (sub)bilinear maximal operator 
$$
\widetilde{\calM}_{2, (\alpha_1, \alpha_2)} (f_1, f_2)(x):= \sup_{\epsilon>0} \frac{1}{2\epsilon} \int_{\frac{\epsilon}{2} \le |t|<\epsilon} \left|f_1(x+t^{\alpha_1}) f_2(x+t^{\alpha_2}) \right|dt.
$$
Applying the change variable $s=t^{\alpha_1}$, we see that
\begin{eqnarray} \label{20230904eq01}
\widetilde{\calM}_{2, (\alpha_1, \alpha_2)} (f_1, f_2)(x)%
&\simeq&  \sup_{\epsilon>0} \frac{1}{\epsilon^{\alpha_1}} \int_{\frac{\epsilon^{\alpha_1}}{2^{\alpha_1}}<|s|<\epsilon^{\alpha_1}} \left|f_1(x+t)f_2 \left(x+t^{\frac{\alpha_2}{\alpha_1}} \right) \right| ds \nonumber \\
&=& \sup_{\epsilon>0} \frac{1}{\epsilon} \int_{\frac{\epsilon}{2^{\alpha_1}}<|s|<\epsilon} \left|f_1(x+t)f_2 \left(x+t^{\frac{\alpha_2}{\alpha_1}} \right) \right| ds. 
\end{eqnarray}
Observe that the curve $\gamma(t):=t^{\frac{\alpha_2}{\alpha_1}}$ belongs to the class $\calN\calF$, and hence by \cite[Theorem 3]{LX16}, the operator \eqref{20230904eq01} extends boundedly from $L^{p_1}(\R) \times L^{p_2}(\R)$ into $L^r(\R)$ for any $p_1,\,p_2,\,r$ obeying 
$$
\frac{1}{p_1}+\frac{1}{p_2}=\frac{1}{r} \quad \textrm{with} \quad 1<p_1, p_2 \le \infty \quad \textrm{and} \quad \frac{1}{2}< r \le \infty. 
$$
Finally, the boundedness of $\calM_{n,\vec{\al}}$ in the general quasi-Banach range stated in Theorem \ref{mainthm03}
follows from multi-linear interpolation. 
\hfill\qedsymbol{}

\begin{rem}
It is easy to notice that the boundedness range of $\calM_{n,\vec{\al}}$ does not contain the endpoints of the form $(p_1,\ldots,p_n, r)$ of the form $p_{j_0}=r=1$ and $p_j=\infty$ for $j\not=j_0$. This follows from a straightforward adaptation of the example provided in \cite[Section 5.3]{GL20}.
\end{rem}


\end{document}